% SIAM Article Template
\documentclass[]{siamart171218}
%\documentclass[review]{siamart171218}

% Information that is shared between the article and the supplement
% (title and author information, macros, packages, etc.) goes into
% ex_shared.tex. If there is no supplement, this file can be included
% directly.

%\def\Rd{\color{red}} 
\def\Rd{} 
\def\B{\color{black}}
%\newcommand{\carla}[1]{{\color{magenta}#1}}

% SIAM Shared Information Template
% This is information that is shared between the main document and any
% supplement. If no supplement is required, then this information can
% be included directly in the main document.

% Packages and macros go here
\usepackage{lipsum}
\usepackage{amsfonts}
\usepackage{graphicx}
\usepackage{epstopdf}
 \usepackage{mathtools}
\usepackage{algpseudocode}
\usepackage{subfigure}

\ifpdf
  \DeclareGraphicsExtensions{.eps,.pdf,.png,.jpg}
\else
  \DeclareGraphicsExtensions{.eps}
\fi

% Add a serial/Oxford comma by default.

% Used for creating new theorem and remark environments
\newsiamremark{example}{Example}
\newsiamremark{remark}{Remark}
%\newsiamremark{hypothesis}{Hypothesis}
%\crefname{hypothesis}{Hypothesis}{Hypotheses}
%\newsiamthm{claim}{Claim}

%% Sets running headers as well as PDF title and authors
\headers{A Tchebycheffian extension of multi-degree B-splines}{R. R. Hiemstra, T. J. R. Hughes, C. Manni, H. Speleers, and D. Toshniwal}

% Title. If the supplement option is on, then "Supplementary Material"
% is automatically inserted before the title.
\title{A Tchebycheffian extension of multi-degree B-splines: Algorithmic computation and properties\thanks{Submitted to the editors DATE.}}

% Authors: full names plus addresses.
\author{Ren\'e R. Hiemstra\footnotemark[3]~\thanks{Institut f{\"u}r Baumechanik und Numerische Mechanik, Leibniz Universit{\"a}t Hannover, Germany (\email{rene.hiemstra@ibnm.uni-hannover.de})}
\and Thomas J. R. Hughes\thanks{Oden Institute for Computational Engineering and Sciences, University of Texas at Austin, USA (\email{hughes@oden.utexas.edu}).}
\and Carla Manni\thanks{Department of Mathematics, University of Rome Tor Vergata, Italy (\email{manni@mat.uniroma2.it}, \email{speleers@mat.uniroma2.it}).}
\and Hendrik Speleers\footnotemark[4]
\and Deepesh Toshniwal\footnotemark[3]~\thanks{Delft Institute of Applied Mathematics, Delft University of Technology, The Netherlands (\email{d.toshniwal@tudelft.nl})}}

\usepackage{amsopn}

%%% Local Variables: 
%%% mode:latex
%%% TeX-master: "ex_article"
%%% End: 

%% GEOMETRY - DEEPESH

\newcommand{\domain}{\Delta}

\newcommand{\interval}{J}

%% GEOMETRY - RENE
\renewcommand{\a}{\ensuremath{a}}			% interval start
\renewcommand{\b}{\ensuremath{b}}			% interval end
\renewcommand{\aa}{\ensuremath{z}}	        % integration starting point
		% interval end
\newcommand{\I}{\ensuremath{J}}			% interval

              	% dimension generalized Bernstein basis
		% number of continuity constraints that are applied at a knot
\newcommand{\lmult}{\ensuremath{l}}		% local knot multiplicity for alpha
\newcommand{\rmult}{\ensuremath{l}}		% local multiplicity for beta
\newcommand{\lintsum}{\ensuremath{\mu_{\mbf{\rknot}}}}		% integer
\newcommand{\rintsum}{\ensuremath{\mu_{\mbf{\lknot}}}}		% integer
\newcommand{\lsmooth}{\ensuremath{r_{\mbf{\lknot}}}}
\newcommand{\rsmooth}{\ensuremath{r_{\mbf{\rknot}}}}
\newcommand{\lsmoothtilde}{\ensuremath{r_{\tilde{\mbf{\lknot}}}}}
\newcommand{\rsmoothtilde}{\ensuremath{r_{\tilde{\mbf{\rknot}}}}}

% integer maps

%% SPLINES
\newcommand{\degree}{p}

\newcommand{\bdegree}{\mbf{\degree}}
\newcommand{\smooth}{r}

\newcommand{\bsmooth}{\mbf{\smooth}}

\newcommand{\splSpace}{\mathbb{S}}
\newcommand{\splSpacep}{\splSpace^{\bdegree}(\domain)}
\newcommand{\splSpacerp}{\splSpace_\bsmooth^\bdegree(\domain)}
\newcommand{\splSpacerpDer}{\widehat{\splSpace_\bsmooth^\bdegree}(\domain)}
\newcommand{\splSpacerpA}{\splSpace_\bsmooth^\bdegree(\domain)}
\newcommand{\splSpacerpB}{\splSpace_{\tilde{\bsmooth}}^{\bdegree}(\domain)}

\newcommand{\bspint}{d}
\newcommand{\bbint}{b}

%% SPACES
%\newcommand{\supp}{\ensuremath{\Omega}}

\newcommand{\Span}[1]{\ensuremath{\mathrm{span}\left\{#1\right\}}}

%% DIMENSION
\newcommand{\N}{\ensuremath{n}}              	% dimension global B-spline basis
\newcommand{\M}{\ensuremath{\theta}}              	% dimension global Bernstein basis
\renewcommand{\O}{\ensuremath{\phi}}             % number of continuity equations

%% SPLINE SPACES - RENE
\renewcommand{\SS}[2]{\ensuremath{\mathbb{S}_{#1}^{#2}}}				% Spline space
\newcommand{\GP}[2]{\ensuremath{\mathbb{G}_{#1}^{#2}}}
\newcommand{\ECT}[2]{\ensuremath{\mathbb{T}_{#1}^{#2}}}	
\newcommand{\ECTtilde}[2]{\ensuremath{\widehat{\mathbb{T}}_{#1}^{#2}}}			% ECT space
\newcommand{\uu}{\ensuremath{g}}									% generalized powers
\newcommand{\bs}{\ensuremath{B}}									% Bernstein basis function
\newcommand{\BS}{\ensuremath{N}}	
\newcommand{\MS}{\ensuremath{M}}								% B-spline basis function
\newcommand{\BStilde}{\widehat{\BS}}
\newcommand{\MStilde}{\widehat{\MS}}

\newcommand{\jump}[2]{\ensuremath{\mathrm{Jump}_{#1} (#2)}}	% jump operator
\renewcommand{\r}{\ensuremath{r}}									% smoothness
\newcommand{\R}{\mbf{r}}										% smoothness vector

\newcommand{\nelms}{\ensuremath{m}}							% elements within the partition

% vectors and matrices
\newcommand{\vect}[2]{\ensuremath{\boldsymbol{\mathrm{#1}}_{#2}}}
\newcommand{\mat}[1]{\ensuremath{\mathsf{#1}}}
\newcommand{\List}[1]{\ensuremath{\left\{ \right. #1 \left.\right\} }}

% parameter and vector
\newcommand{\dd}{\ensuremath{\,\mathrm{d}}}
\newcommand{\x}{\ensuremath{x}}

\newcommand{\y}{\ensuremath{y}}

\newcommand{\p}{\ensuremath{p}}
\newcommand{\q}{\ensuremath{q}}
\renewcommand{\P}{\mbf{\p}}

\newcommand{\knot}{\ensuremath{\xi}}
\newcommand{\lknot}{\ensuremath{u}}
\newcommand{\rknot}{\ensuremath{v}}
\newcommand{\normalize}[1]{\ensuremath{\widetilde{#1}}}

\newcommand{\dof}{\ensuremath{d}}
\newcommand{\newdof}{\ensuremath{\tilde{d}}}
\newcommand{\C}{\ensuremath{\mat{\tilde{C}}}}
\newcommand{\CC}{\ensuremath{\mat{C}}}
\newcommand{\ca}{\ensuremath{\alpha}}
\newcommand{\cb}{\ensuremath{\beta}}
\newcommand{\A}[2]{\ensuremath{A_{#1}(#2)}}
\renewcommand{\AA}{\ensuremath{\mat{A}}}
\newcommand{\cons}{\ensuremath{\rho}}

\newcommand{\spR}{\ensuremath{\mathbb{R}}}

\newcommand{\highlight}[1]{{\sl #1}}

\newcommand{\Chef}{Tchebycheffian }
\newcommand{\Chefshort}{Tchebycheff }
\newcommand{\Bezier}{B\'ezier~}

% Algorithm
\newcommand*\Let[2]{\State #1 $\gets$ #2}

%%% MATH COMMANDS
\newcommand{\suppA}[2]{[\lknot_{#1}, \rknot_{#2}]}
\newcommand{\suppB}[2]{[\tilde{\lknot}_{#1}, \tilde{\rknot}_{#2}]}
\newcommand{\tripleA}[2]{\left( \suppA{#1}{#2}, \lsmooth(#1), \rsmooth(#2) \right)}
\newcommand{\tripleB}[2]{\left( \suppB{#1}{#2}, \lsmoothtilde(#1), \rsmoothtilde(#2) \right)}

\newcommand{\mbf}[1]{{\boldsymbol{#1}}}
\newcommand{\RR}{{\mathbb{R}}}

\newcommand{\ZZ}{{\mathbb{Z}}}

\newcommand{\dint}{{\mathrm{d}}}

%% Optional PDF information
\ifpdf
\hypersetup{
 pdftitle={A Tchebycheffian extension of multi-degree B-splines: Algorithmic computation and properties},
 pdfauthor={R. R. Hiemstra, T. J. R. Hughes, C. Manni, H. Speleers, and D. Toshniwal}
}
\fi

\pdfminorversion=7

% The next statement enables references to information in the
% supplement. See the xr-hyperref package for details.

%\externaldocument{ex_supplement}

% FundRef data to be entered by SIAM
%<funding-group>
%<award-group>
%<funding-source>
%<named-content content-type="funder-name"> 
%</named-content> 
%<named-content content-type="funder-identifier"> 
%</named-content>
%</funding-source>
%<award-id> </award-id>
%</award-group>
%</funding-group>

\begin{document}

\maketitle

% REQUIRED
\begin{abstract}
In this paper we present an efficient and robust approach to compute a normalized B-spline-like basis for spline spaces with pieces drawn from extended \Chefshort spaces. The extended \Chefshort spaces and their dimensions are allowed to change from interval to interval. The approach works by constructing a matrix that maps a generalized Bernstein-like basis to the B-spline-like basis of interest. The B-spline-like basis shares many characterizing properties with classical univariate B-splines and may easily be incorporated in existing spline codes. This may contribute to the full exploitation of \Chef splines in applications, freeing them from the restricted role of an elegant theoretical extension of polynomial splines.  Numerical examples are provided that illustrate the procedure described. 
\end{abstract}

% REQUIRED
\begin{keywords}
\Chef splines, Multi-degree B-splines, Generalized B-splines, Extraction algorithms %Trigonometric B-splines, Exponential B-splines
\end{keywords}

% REQUIRED
\begin{AMS}
  41A15, 41A50, 65D07, 65D15
\end{AMS}

% Introduction
\section{Introduction} \label{sec:introduction}
In the classical polynomial setting, univariate multi-degree splines are piecewise polynomial functions that are glued together in a certain smooth way and where the various pieces can have different degrees \cite{Beccari:2017,ShenW:2010a}. This multi-degree formulation offers significant advantages with respect to the classical uniform-degree case, allowing for the modeling of complex geometries with fewer control points and more versatile adaptive schemes in numerical simulation \cite{Sederberg:2003a,Toshniwal:2017a}.

Polynomial splines, in both the uniform-degree or multi-degree version, can be seen as a special case of \Chef splines \cite{Buchwald:2003,Mazure:2011a,Nurnberger:1984,Schumaker:2007}, i.e., smooth piecewise functions whose pieces are drawn from extended \Chefshort spaces (ET-spaces). ET-spaces are natural generalizations of algebraic polynomial spaces \cite{KarlinS:1966,Schumaker:2007} because they satisfy the same bounds on the number of zeros of non-trivial elements. Relevant examples of ET-spaces are nullspaces of linear differential operators on suitable intervals \cite{Coppel:1971,Schumaker:2007}. \Chef splines share many properties with the classical polynomial splines but also offer a much more flexible framework, due to the wide diversity of ET-spaces. Multivariate extensions of \Chef splines can be easily obtained via (local) tensor-product structures \cite{BraccoLMRS:2016-gb,BraccoLMRS:2016,BraccoLMRS:2019}.

The rich variety of parameters in {\Rd \Chef spline spaces (and ET-spaces)} has been explored in free-form design and constrained interpolation/approximation; see \cite{Carnicer:2003, Costantini:2005, LycheS:2000,Mazure:2018, Schweikert:1966, WangF:2008} and references therein. In addition, \Chef splines emerge as a natural tool in several engineering contexts. Among others, \Chef splines based on trigonometric and/or exponential functions allow for an exact representation of conic sections with (almost) arc-length parameterization, without the need for a rational form \cite{Mainar:2001}. As a consequence, their elegant behavior with respect to  differentiation and integration makes them an appealing substitute for the rational NURBS model in the framework of both  Galerkin and collocation isogeometric methods \cite{Aimi:2017,ManniPS:2011a,ManniRS:2015,ManniRS:2017}. When the geometry is not an issue, \Chef splines can still provide an interesting problem-dependent alternative to classical polynomial B-splines/NURBS for solving differential problems: they allow for an efficient treatment of sharp gradients and thin layers \cite{ManniPS:2011b,ManniRS:2015} and are able to outperform classical polynomial B-splines in the spectral approximation of differential operators \cite{ManniRS:2015,ManniRS:2017}.

The success of polynomial splines greatly relies on the famous B-spline basis which can also be defined in the multi-degree setting \cite{Beccari:2017,Nurnberger:1984,ShenW:2010a,ShenW:2010b,Toshniwal:2017a}. Most of the results known for polynomial splines extend in a natural way to \Chef splines. However, the possibility of representing the space in terms of a basis with similar properties to polynomial B-splines is not always guaranteed,  even  for pieces taken from ET-spaces of the same dimension. More precisely, there are two main categories of \Chef splines: the various pieces are drawn either from the same ET-space --- see \cite{Nurnberger:1984} for a proper meaning in case of different local dimensions --- or from different ET-spaces. In the former case, \Chef splines always admit a representation in terms of basis functions with similar properties to polynomial B-splines. The latter offers a much more general framework --- sometimes referred to in the literature as piecewise \Chef splines \cite{Mazure:2018} --- and allows us to optimally benefit from the great diversity of ET-spaces, but the existence of a B-spline-like basis requires constraints on the various ET-spaces.

\Chef splines can be easily incorporated in existing spline codes because the corresponding B-spline-like basis, whenever it exists, is  compatible with classical B-splines  as it enjoys the same structural properties. When all the ET-spaces have the same dimension, various approaches have been used in the \Chef setting to construct such a B-spline-like basis: generalized divided differences \cite{Muhlbach:2006,Schumaker:2007}, Hermite interpolation \cite{Buchwald:2003,Nurnberger:1984}, integral recurrence relations \cite{BisterP:1997,Lyche:2019}, de Boor-like recurrence relations \cite{DynR:1988,Lyche:1985}, and blossoming \cite{Mazure:2011a}. Each of these definitions has advantages according to the problem one has to face or to the properties to be proved. All these constructions lead to the same functions, up to a proper scaling. For non-uniform local dimensions, the literature is much less developed and B-spline-like bases have been constructed via Hermite interpolation \cite{Buchwald:2003,Nurnberger:1984}.

Unfortunately, none of the currently available constructions for B-spline-like bases of \Chef spline spaces is very well suited for their efficient and robust numerical evaluation and manipulation, due to computational complexity and/or numerical instabilities. This drawback has seriously penalized \Chef splines, so far, in practical applications despite their great potential, and has confined them mostly to the role of an elegant theoretical extension of the polynomial case.

%\Rd
%This paper focuses on the construction, properties, and evaluation of a basis with similar properties to polynomial B-splines for \Chef splines with pieces drawn from different ET-spaces of possibly different dimension, whenever it exists. (Please rephrase)\B
%Following \cite{Nurnberger:1984}, we refer to these spaces as \highlight{generalized \Chef splines (GT-splines)} and to the corresponding basis as \highlight{generalized \Chef B-splines (GTB-splines)}.

%\Rd
%The aim of this paper is to formulate an approach that circumvents the aforementioned complexity in working with \Chef splines.
%We do so by focusing on spaces of \Chef splines with pieces drawn from different ET-spaces of possibly different dimensions.
%Following \cite{Nurnberger:1984}, we refer to these spaces as \highlight{generalized \Chef splines (GT-splines)}, and to the corresponding B-spline-like basis as \highlight{generalized \Chef B-splines (GTB-splines)}.
%Then, our main contribution is an efficient and robust algorithm for evaluation of GTB-splines.\B

\Rd
The aim of this paper is to formulate an approach that circumvents the aforementioned complexity in working with \Chef splines. We do so by focusing on spaces of \Chef splines with pieces drawn from different ET-spaces of possibly different dimensions. Following \cite{Nurnberger:1984}, we refer to these spaces as \highlight{generalized \Chef splines (GT-splines)}. The corresponding B-spline-like basis, in case it exists, will be referred to  as \highlight{generalized \Chef B-splines (GTB-splines)}. Then, our main contribution is an efficient and robust algorithm for evaluation of GTB-splines, whenever they exist. \B

%The main goal of the paper is to present an efficient and robust algorithm for evaluation of GTB-splines.
The algorithm proceeds by incrementally increasing the smoothness at the breakpoints starting from the space of piecewise discontinuous functions obtained by collecting the various ET-spaces which are represented in terms of a Bernstein-like basis. At each step, the smoothness constraints are represented in the form of a matrix whose nullspace identifies the basis elements. The algorithm explicitly constructs this nullspace without solving any linear systems. In other words, at each step, the algorithm explicitly constructs a matrix that specifies how GTB-splines that are $C^r$ at some breakpoint can be linearly combined to form GTB-splines that are $C^{r+1}$. The product of all such matrices is called an \highlight{extraction operator} and it maps the Bernstein-like basis to GTB-splines. In fact, we prove that the output of the algorithm is exactly the entire set of GTB-splines that span the considered GT-spline space. The algorithm can be seen as a \Chef extension of the one proposed in \cite{Speleers:2018,Toshniwal:2017a,Toshniwal:2018b} for multi-degree polynomial splines. In order to ensure existence of a GTB-spline basis, we consider the sufficient conditions proposed in \cite{Buchwald:2003} which can be easily checked and are satisfied for a wide class of GT-splines of interest in applications. However, this is not a limitation of the algorithm we are proposing; the algorithm produces the required GTB-spline basis whenever it exists.

% Our main goal
\Rd
The above contribution\B~is complemented by additional results: we provide a knot insertion formula and a global integral recurrence relation for GTB-splines. While the former is in complete analogy with the one known for the multi-degree polynomial case \cite{Beccari:2017,Toshniwal:2018b}, the latter is a new contribution also for the multi-degree polynomial case, where only local integral recurrence relations have been proposed so far in the literature \cite{Beccari:2017, ShenW:2010a}. The provided global integral recurrence relation completely mimics the one known for polynomial/\Chef splines of uniform degree/local dimension and is expressed in an elegant way by using an extension of the concept of weights.

The remainder of the paper is organized as follows. Section~\ref{sec:preliminaries} recalls  several  properties of ET-spaces, introduces notation, and defines the space of GT-splines. The existence, under proper assumptions, of GTB-splines is summarized in Section~\ref{sec:3}; it basically collects in a homogeneous and self-contained presentation results from \cite{Buchwald:2003,Nurnberger:1984}. Section~\ref{sec:3_rec} presents local and global integral recurrence relations for GTB-splines, while Section~\ref{sec:4} is devoted to the knot insertion formula which is the main ingredient for the evaluation algorithm described and analyzed in Section~\ref{sec:5}.
An interesting case study is detailed in Section~\ref{sec:6_gb}, and some numerical examples are collected in Section~ \ref{sec:6}. We end with some concluding remarks in Section~\ref{sec:conclusion}.

% general setting Tchebysheffian splines
\section{Preliminaries} \label{sec:preliminaries}
We are interested in piecewise functions, whose pieces belong to ET-spaces and are glued together in a certain smooth way. We first define ET-spaces on a real interval $\interval$ (see, e.g., \cite{Schumaker:2007}).

\begin{definition}[Extended \Chefshort space]
\label{def:ET}
Given %an integer $\p\geq 0$ and 
an interval $\interval$, a space $\ECT{\p}{}(\interval)\subset C^{\p}(\interval)$ of dimension $\p+1$ is an \highlight{extended \Chefshort \mbox{(ET-)} space} on $\interval$ if any Hermite interpolation problem with $\p+1$ data on $\interval$ has a unique solution in $\ECT{\p}{}(\interval)$.
In other words, for any integer $m\geq1$,
let ${\bar x}_1,\ldots, {\bar x}_m$ be distinct points in $\interval$ and
let $d_1,\ldots,d_m$ be nonnegative integers such that $\p+1=\sum_{i=1}^m (d_i+1)$. Then,
for any set $\{f_{i,j}\in\RR\}_{i=1,\ldots,m,\, j=0,\ldots,d_i}$
there exists a unique $q\in\ECT{\p}{}(\interval)$ such that
\begin{equation*}
D_{}^j q({\bar x}_i)=f_{i,j}, \quad i=1,\ldots,m, \quad j=0,\ldots,d_i.
\end{equation*}
\end{definition}

If $\interval$ is a bounded closed interval, then any ET-space of dimension $\p+1$ on $\interval$ is an extended complete Tchebycheff (ECT-) space on $\interval$, i.e., it is spanned by the following functions (see \cite{Mazure:2007, Schumaker:2007}):
\begin{align}
\label{eq:gen-powers}
\begin{cases}
	\uu_0(\x) := w_0(\x), \\
	\uu_1(\x) := w_0(\x) \int_{\aa}^\x w_1(\y_1) \dint\y_1, \\
	\hspace*{1.1cm} \vdots \\
	\uu_\p(\x) := w_0(\x) \int_{\aa}^\x w_1(\y_1) \int_{\aa}^{\y_1} \cdots   \int_{\aa}^{y_{\Rd \p-1 \B}} w_\p(\y_\p) \dint\y_{\p} \cdots \dint\y_1,
	\end{cases}
\end{align}
where $\aa$ is any point in $\interval$ and $w_{j} \in C^{\p-j}(\interval)$, $j=0,1,\ldots , \p$ are positive functions called \highlight{weights}. The functions $\Rd \uu_0, \ldots, \uu_\p$ are called \highlight{generalized powers}.

\begin{remark}\label{rmk:weights_factor}
A given ECT-space can be identified by different sets of weights; see \cite{Lyche:2019} for details and examples. In particular, it is clear that
the two weight systems
\begin{equation*}%\label{eq:weights_factor}
w_0,\ldots,w_p \quad {\rm and} \quad K_0w_0,\ldots,K_pw_p,
\end{equation*}
where $K_0,\ldots, K_p$ are positive constants, identify the same ECT-space.
\end{remark}
\begin{remark}\label{rmk:gen-powers-der}
A very important case for applications is $w_0=1$ with $\p\geq 1$. Under such assumptions, it can be directly checked that
\begin{align}\label{eq:gen-powers-der}
\begin{cases}
	D \uu_0(\x) = 0, \\
	D \uu_1(\x) = w_1(\x), \\
	\hspace*{1.35cm}\vdots \\
	D \uu_\p(\x) = w_1(\x) \int_{\aa}^{\x} \Rd w_2(\y_2) \int_{\aa}^{\y_{2}} \B \cdots   \int_{\aa}^{\y_{\Rd \p-1 \B}} w_\p(\y_\p) \dint\y_{\p} \cdots \dint\y_2,
	\end{cases}
\end{align}
i.e., the space spanned by the derivatives of the functions in \cref{eq:gen-powers} is an ECT-space of dimension $\p$ on $\interval$ and it is identified by the weights $w_1, \ldots, w_\p$.
\end{remark}
\begin{remark}
The polynomial space of degree $p$ fits in this framework by taking $w_0=\cdots=w_p=1$. In this case, the functions in \cref{eq:gen-powers} become
% \begin{equation*}
% u_j=\frac{(x-\aa)^j}{j!},
% \end{equation*}
$\uu_j={(x-\aa)^j}/{(j!)}$,
which are the standard (polynomial) power functions.
\end{remark}

Pieces of our splines shall be drawn from arbitrary ECT-spaces of possibly different dimensions. Consider a partitioning, $\domain$, of the interval $[\a,\b] \subset \spR$ into a sequence of breakpoints,
\begin{equation*}
	\domain := \List{\a =: \x_0 < \x_1 <  \cdots < \x_{\nelms-1} < \x_{\nelms} := \b}.
\end{equation*}
Furthermore, we set $\I_i := [\x_{i-1}, \x_{i})$, $i=1, \ldots, \nelms-1$, and $\I_\nelms := [\x_{\nelms-1}, \x_{\nelms}]$. We also define an ECT-space of dimension $\p_i+1$ on each closed interval $[\x_{i-1}, \x_{i}]$, $i=1, \ldots, \nelms$:
\begin{equation*}
	\ECT{\p_i}{(i)} := \Span{\uu^{(i)}_0, \ldots , \uu^{(i)}_{\p_i}}, \quad \uu^{(i)}_j \in C^{\p_i}([\x_{i-1}, \x_{i}]), \quad j=0, \ldots, \p_i,
\end{equation*}
where $\uu^{(i)}_0, \ldots , \uu^{(i)}_{\p_i}$ are generalized powers defined in terms of positive weights $w_j^{(i)}\in C^{\p_i-j}([\x_{i-1}, \x_{i}])$, $j=0 \ldots,\p_i$ as in \cref{eq:gen-powers}.
Collectively, these functions span the following space:
\begin{equation*}
%	\splSpacep := \Big\{ [\a, \b] \xrightarrow{s} \RR :
\splSpacep := \Big\{ {\Rd s: [\a, \b] \rightarrow \RR :}
	 \left. s \right|_{\I_i} \in \ECT{\p_i}{(i)},\; i=1,\ldots, \nelms \Big\}.
\end{equation*}

In order to measure smoothness at the breakpoints we define the following jump operator for a given $s \in \splSpacep$,
\begin{equation*}
	\jump{\x_i, k}{s} := D^{k}_{-}s(\x_i) - D^{k}_{+}s(\x_i).
\end{equation*}
Then, %by prescribing the maximum smoothness at the breakpoints
we can define the space of generalized \Chef splines as follows.

\begin{definition}[Generalized \Chef splines] \label{def:spline-space}
Given the sets of integers $\P := \{\p_1, \ldots, \p_\nelms\}$ and
\begin{equation}\label{eq:smoothness}
\R := \{\r_i \in \ZZ  : -1 \leq \r_i \leq \min\{\p_{i}, \p_{i+1}\}, \; i=1,\ldots , \nelms-1, \; \r_0 = \r_{\nelms} = -1\},
\end{equation}
 we define
\begin{equation} \label{eq:spline-space}
    \splSpacerp := \Big\{  s \in \splSpacep :  \jump{\x_i, j}{s} = 0, \; j = 0,\ldots, \r_i \text{ and } i = 1, \ldots, \nelms-1   \Big\}.
\end{equation}
\end{definition}

The value $\r_i$ represents the smoothness at breakpoint $\x_i$, $i=1,\ldots,m-1$. The set of linear constraints, encoded in $\jump{\x_i, j}{s} = 0 $, enforce the prescribed smoothness in between adjoining elements of the partition. This leads to a system 
of $\O := \sum_{i=1}^{\nelms-1} (\r_i+1) $ equations in $\M := \sum_{i=1}^{\nelms}(\p_i+1)$ unknowns. All smoothness conditions are linearly independent because the functions $\{u^{(i)}_0(\x), \ldots , u^{(i)}_{\p_i}(\x)\}$ on each interval $\I_i$ form an ECT-system \cite{Buchwald:2003,Nurnberger:1984}. This leads to the following dimension result; see also \cite[Theorem~1.1]{Buchwald:2003}.

\Rd
\begin{proposition}
The dimension of $\splSpacerp$ is
\begin{equation*}
\N := \M-\O=\p_1+1+\sum_{i=1}^{\nelms-1} {(\p_{i+1}-\r_i)}=\p_{\nelms}+1+\sum_{i=1}^{\nelms-1}(\p_i-\r_i).
\end{equation*}
\end{proposition}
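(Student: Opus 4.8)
The plan is to realize $\splSpacerp$ as the kernel of the linear map that collects the smoothness constraints, read off its dimension by rank--nullity, and then reconcile the three expressions by elementary reindexing.

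First I would record that $\dimwp{\splSpacep}=\M$. Indeed, an element $s\in\splSpacep$ is determined by its restrictions $s|_{\I_i}\in\ECT{\p_i}{(i)}$, which may be chosen independently since $\splSpacep$ carries no constraints; as the local spaces have dimensions $\p_i+1$, one gets $\dimwp{\splSpacep}=\sum_{i=1}^{\nelms}(\p_i+1)=\M$. Next I would introduce
\begin{equation*}
L\colon \splSpacep \to \RR^{\O}, \qquad s \mapsto \big(\jump{\x_i, j}{s}\big)_{i=1,\ldots,\nelms-1,\; j=0,\ldots,\r_i},
\end{equation*}
whose codomain has dimension $\O=\sum_{i=1}^{\nelms-1}(\r_i+1)$. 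By \cref{def:spline-space} we have $\splSpacerp=\kerwp{L}$, so that $\dimwp{\splSpacerp}=\M-\mathrm{rank}(L)$.

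The substantive step, and the only genuine obstacle, is to prove $\mathrm{rank}(L)=\O$, i.e.\ that the $\O$ jump functionals are linearly independent on $\splSpacep$; this is exactly the fact stated just before the proposition, with references \cite{Buchwald:2003,Nurnberger:1984}. To argue it directly I would exploit that the pieces are free: assuming $\sum_{i,j}c_{i,j}\jump{\x_i, j}{\cdot}\equiv 0$, I process the breakpoints from left to right, at stage $i$ setting every piece except $s|_{\I_i}$ to zero. This leaves active only the functionals at $\x_{i-1}$ and $\x_i$; once the coefficients at $\x_{i-1}$ are known to vanish (vacuously at the first breakpoint, where no constraint sits at $\x_0$), what remains is $\sum_{j}c_{i,j}\,D^j_{-}s(\x_i)$. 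The functionals $D^0_{-},\ldots,D^{\r_i}_{-}$ at $\x_i$ are linearly independent on $\ECT{\p_i}{(i)}$ because $\{\uu^{(i)}_0,\ldots,\uu^{(i)}_{\p_i}\}$ is an ECT-system and $\r_i\le\p_i$, which forces $c_{i,j}=0$ for all $j$ and completes the induction. Granting this, $\dimwp{\splSpacerp}=\M-\O=:\N$.

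The remaining two equalities are pure bookkeeping, which I would display directly. Writing $\M-\O=\sum_{i=1}^{\nelms}\p_i-\sum_{i=1}^{\nelms-1}\r_i+1$, I split off the first term using $\sum_{i=1}^{\nelms}\p_i=\p_1+\sum_{i=1}^{\nelms-1}\p_{i+1}$ to reach $\p_1+1+\sum_{i=1}^{\nelms-1}(\p_{i+1}-\r_i)$, and symmetrically split off the last term using $\sum_{i=1}^{\nelms}\p_i=\p_{\nelms}+\sum_{i=1}^{\nelms-1}\p_i$ to reach $\p_{\nelms}+1+\sum_{i=1}^{\nelms-1}(\p_i-\r_i)$. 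The whole content therefore sits in the rank computation; since the linear independence of the constraints is already established in the text, the proof consists of assembling rank--nullity with these two reindexings.
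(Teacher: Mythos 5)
Your proposal is correct and takes essentially the same route as the paper: the paper likewise realizes $\splSpacerp$ as the kernel of the system of jump functionals, counting $\M$ unknowns against $\O$ constraints whose linear independence it attributes to the ECT-system property of the local bases (citing Buchwald and N\"urnberger), and then reads off $\N=\M-\O$. Your left-to-right induction merely spells out the linear-independence step that the paper delegates to the literature, and your reindexing of the closed-form expressions is the same bookkeeping.
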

\B

Whenever we deal with a single weight system $w_0,\ldots, w_\p$, $\p:=\max_{1\leq i \leq\nelms} \p_i$, defined on the entire interval $[\a, \b]$, all the pieces of the spline functions are basically taken from the ``same'' ECT-space, possibly allowing different local dimensions.
In this case, the spline space in \cref{eq:spline-space} is quite well understood and it enjoys all the nice properties of standard polynomial splines; see \cite{Lyche:2018,Mazure:2011a,Schumaker:2007} and references therein for the case where all the local spaces have the same dimension, and \cite{Nurnberger:1984} for non-uniform dimensions.
On the other hand, in order to fully exploit the richness and the variety of ECT-spaces, it is of interest to consider different ECT-spaces on different intervals. In this much more general framework, obtaining spline spaces equipped with the same properties as standard polynomial splines, including a B-spline-like basis, entails constraints on the various ECT-spaces which can be described in terms of reciprocal smoothness of the associated weight systems.
%In this perspective, we consider the following definition; see \cite[Lemma~2.7]{Buchwald:2003}.
 From  this perspective, we consider the following definition, which is equivalent to the requirement on the weights in \cite[Lemma~2.7]{Buchwald:2003} taking into account \cref{rmk:weights_factor}.

%\begin{assumption}[Smoothness of weights]
\begin{definition}[Admissible weights]
\label{weight:assumption}
The weight systems $\{ w_j^{(i)}, \; j=0,\ldots, \p_i \}$ generating the ECT-spaces $ \ECT{\p_i}{(i)}$, $i=1, \ldots, m$, are \highlight{admissible} for the space $\splSpacerp$ if
for $i=1,\ldots,\nelms-1$ and $j=0,\ldots,\r_i$ we have
\begin{equation*}
D_{-}^lw_j^{(i)}(x_i)=D_{+}^lw_j^{(i+1)}(x_i), \quad l=0,\ldots, \r_i-j.
\end{equation*}
\end{definition}
%\end{assumption}

% \begin{remark}
% The formulation in \cref{weight:assumption} seems, at first sight, more restrictive than the statement in \cite[Lemma~2.7]{Buchwald:2003}. The latter requires that for $i=1,\ldots,\nelms-1$ and $j=0,\ldots,\r_i$ there exist positive constants $K_{ij}$ such that
% \begin{equation*}
% D_{-}^lw_j^{(i)}(x_i)=K_{ij}D_{+}^lw_j^{(i+1)}(x_i), \quad l=0,\ldots, \r_i-j.
% \end{equation*}
% However, in view of \cref{eq:weights_factor}, 
% \end{remark}

\begin{remark}
How to construct the weights is well known for a single ECT-space \cite{Karlin:1968,Mazure:2011a} but it can be an issue whenever different ECT-spaces are considered \cite{Mazure:2018}. However, there are cases where admissible weights, according to \cref{weight:assumption}, can be easily constructed. For example, they can be obviously extracted from a single weight system such that all the pieces are drawn from the same ECT-space. Furthermore, they can be easily deduced for an interesting class of \Chef splines which allows for the use of different ECT-spaces, the so-called \highlight{generalized polynomial splines}; see \cref{sec:6_gb}. %As will be shown in \cref{sec:6_gb}, for them, the general construction readily applies without requiring any additional work.
For handling more general settings, one could apply the constructive procedure for finding all weight systems associated with a given ECT-space in a bounded closed interval presented in \cite{Mazure:2011b}.
\end{remark}

\begin{remark}
Dealing with admissible weights gives only a sufficient condition for obtaining \Chef splines equipped with a B-spline-like basis; see \cite{Buchwald:2003} and also the next section. The simplicity of this condition and the fact that it embraces relevant classes of \Chef splines motivate our choice. We refer the reader to \cite{Mazure:2018} for explicit necessary and sufficient conditions for smoothly gluing together ECT-spaces of dimension $5$.
\end{remark}

In the next section we summarize from the literature some properties of GT-spline spaces. In particular, we show that, under certain assumptions on the weights, a GT-spline space admits a B-spline-like basis. % and we present {\Rd some of its properties.} %some properties of these basis functions.
%In this work,
%after summarizing the properties the space $\splSpacerp := \splSpacerp(\domain)$, we   develop a stable and efficient approach for the computation of a set of B-spline-like basis functions that span this space. The approach proceeds by incrementally increasing the smoothness at the break points starting from the space $\splSpacep:=\splSpacep(\domain)$.

% B-splines and properties
\section{Generalized \Chef B-splines} \label{sec:3}
In this section we introduce basis functions for the GT-spline space $\splSpacerp$ that possess all the characterizing properties of standard polynomial B-splines. B-spline-like bases can be defined under different normalizations. Here, in this paper, we mainly focus on the partition-of-unity normalization, and we call the corresponding functions \highlight{Generalized \Chef B-splines (GTB-splines)}.
The material we are going to present in the current section is greatly inspired by the results in \cite{Buchwald:2003,Nurnberger:1984}.
We provide a concise summary, aiming for a self-contained presentation and a unified notation tailored for the subsequent sections. 
Furthermore, we detail the proofs of results which are not explicitly presented in the literature in the form we need.

We begin by introducing some notation and several results that assist in characterizing GTB-splines.
Similar to polynomial B-splines, GTB-splines can be defined using certain knot vectors. To allow for ECT-spaces of varying dimension, it is convenient to consider two knot vectors,
\begin{subequations}
\begin{align}
\mbf{\lknot}{} &:= (\lknot_k )_{k=1}^{\N}:= (\;
    \underbrace{\x_0,\; \ldots,\;  \x_0}_{\p_{1} - \r_0 \text{ times}},\;  \ldots,\;
    \underbrace{\x_{i},\;  \ldots,\;  \x_{i}}_{\p_{i+1} - \r_i \text{ times}},\;   \ldots,\;
    \underbrace{\x_{\nelms-1},\;  \ldots,\;  \x_{\nelms-1}}_{\p_{\nelms} - \r_{\nelms-1} \text{ times}}
    \; ), 	\label{eq:knots1} \\
\mbf{\rknot}{} &:= (\rknot_k )_{k=1}^{\N}:= (\;
    \underbrace{\x_1,\;  \ldots,\;  \x_1}_{\p_1 - \r_1 \text{ times}},\; \ldots,\;
    \underbrace{\x_{i},\;  \ldots,\;  \x_{i}}_{\p_i - \r_i \text{ times}},\; \ldots ,\;
    \underbrace{\x_\nelms,\;  \ldots,\;  \x_\nelms}_{\p_{\nelms} - \r_{\nelms} \text{ times}}
    \; ). \label{eq:knots2}
\end{align}
\end{subequations}

\begin{remark}
With respect to standard polynomial splines of uniform degree $\p$, the above vectors are related to the standard B-spline knot vector, $\{\knot_k, \; k=1, \ldots , \N+\p \Rd+ 1 \B \}$, via the relationship, $\lknot_k = \knot_k$ and $\rknot_{k} = \knot_{k+\p+1}$, $k = 1, \ldots, \N$.
\end{remark}

\begin{remark}
The use of the two knot vectors \cref{eq:knots1,eq:knots2} greatly simplifies the presentation of the properties of GTB-splines. These two knot vectors have been introduced in \cite{Buchwald:2003} to characterize properly posed Hermite interpolation problems in GT-spline spaces and have been recently exploited to describe B-spline-like bases for multi-degree polynomial splines in \cite{Beccari:2017,Toshniwal:2018b}.
\end{remark}

Mimicking the polynomial spline setting, each of the intervals
\begin{equation*}
[\lknot_k, \rknot_{k}], \quad k=1, \ldots , \N,
\end{equation*}
corresponds to the support of a B-spline-like basis function in $\splSpacerp$.
%(more precisely, its support). 
\cref{lemma:6} implies that these intervals are non-empty and satisfy $[\lknot_k, \rknot_{k}] \cup [\lknot_{k+1}, \rknot_{k+1}] = [\lknot_k, \rknot_{k+1}]$ and $[\lknot_k, \rknot_{k}] \cap [\lknot_{k+1}, \rknot_{k+1}] = [\lknot_{k+1}, \rknot_{k}]$. \cref{lemma:7} shows that there are $\p_i+1$ of such intervals that intersect with element $[\x_{i-1},\x_i)$. Before proving these lemmas, we define two types of quantities,
% \begin{align*}
% \rintsum(0) &:= 0, \quad \rintsum(i) := \sum_{j=0}^{i-1} (\p_{j+1} - \r_{j}), \quad  i=1,\ldots,\nelms, \\
% \lintsum(0) &:= 0, \quad \lintsum(i) := \sum_{j=1}^{i} (\p_{j} - \r_j), \quad i=1,\ldots,\nelms.
% \end{align*}
\begin{equation*}
\rintsum(i) := \sum_{j=0}^{i-1} (\p_{j+1} - \r_{j}), \quad
\lintsum(i) := \sum_{j=1}^{i} (\p_{j} - \r_j), \quad 
i=0,\ldots,\nelms,
\end{equation*}
where an empty sum is assumed to be zero.

\begin{lemma}\label{lemma:6}
Let $k \in  \{1, \ldots, \N\}$ be arbitrary. It holds that $\lknot_{k} \leq \rknot_{k-1}$ and $\lknot_{k} < \rknot_{k}$.
Additionally, if $\r_i \geq 0$ for all $i \in \{1, \dots, m-1\}$, then $\lknot_{k} < \rknot_{k-1} \leq \rknot_{k}$.
\end{lemma}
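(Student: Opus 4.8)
The plan is to pin down, for each index $k$, exactly which breakpoint the knots $\lknot_k$ and $\rknot_k$ coincide with, and then read off all three inequalities from the strict ordering $\x_0 < \x_1 < \cdots < \x_\nelms$. First I would record that both knot sequences are non-decreasing: every multiplicity appearing in \cref{eq:knots1,eq:knots2} is non-negative, since \cref{eq:smoothness} gives $\r_i \le \min\{\p_i,\p_{i+1}\}$, hence $\p_{i+1}-\r_i \ge 0$ and $\p_i-\r_i \ge 0$. Counting how many knots fall on or before a given breakpoint, the functions $\rintsum$ and $\lintsum$ yield the rank characterizations
\begin{equation*}
\lknot_k = \x_i \iff \rintsum(i) < k \le \rintsum(i+1), \qquad \rknot_k = \x_i \iff \lintsum(i-1) < k \le \lintsum(i),
\end{equation*}
equivalently $\lknot_k \le \x_{i-1} \iff k \le \rintsum(i)$ and $\rknot_k \le \x_i \iff k \le \lintsum(i)$.

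Second, the crux is a single identity linking the two counting functions. Reindexing $\rintsum(i) = \sum_{\ell=1}^{i}(\p_\ell - \r_{\ell-1})$ and subtracting $\lintsum(i) = \sum_{\ell=1}^{i}(\p_\ell-\r_\ell)$ telescopes to
\begin{equation*}
\rintsum(i) - \lintsum(i) = \sum_{\ell=1}^{i}(\r_\ell - \r_{\ell-1}) = \r_i - \r_0 = \r_i + 1,
\end{equation*}
using $\r_0 = -1$ from \cref{eq:smoothness}. Combining this with $\rintsum(i+1) = \rintsum(i) + (\p_{i+1}-\r_i)$ also gives $\rintsum(i+1) = \lintsum(i) + \p_{i+1} + 1$.

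With these in hand the claims become immediate index comparisons. For $\lknot_k < \rknot_k$, write $\rknot_k = \x_i$ (so $i \ge 1$); then $k \le \lintsum(i) \le \rintsum(i)$ because $\r_i + 1 \ge 0$, whence $\lknot_k \le \x_{i-1} < \x_i = \rknot_k$. For $\lknot_k \le \rknot_{k-1}$, write $\rknot_{k-1} = \x_i$ with $1 \le i \le \nelms-1$; then $k \le \lintsum(i) + 1 \le \rintsum(i+1)$ since $\rintsum(i+1) - \lintsum(i) = \p_{i+1} + 1 \ge 1$, giving $\lknot_k \le \x_i = \rknot_{k-1}$. The monotonicity $\rknot_{k-1} \le \rknot_k$ is just the first step applied to the non-decreasing sequence $\mbf{\rknot}$. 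Finally, under the extra hypothesis $\r_i \ge 0$ for $1 \le i \le \nelms-1$, the sharper bound $k \le \lintsum(i)+1 \le \rintsum(i)$ now holds, because $\rintsum(i) - \lintsum(i) = \r_i + 1 \ge 1$, so $\lknot_k \le \x_{i-1} < \x_i = \rknot_{k-1}$ is strict.

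The main bookkeeping obstacle is the boundary cases rather than any genuine difficulty. When $\rknot_{k-1} = \x_\nelms$ (the index $i = \nelms$), $\rintsum(\nelms+1)$ is undefined, but the inequality is free because every entry of $\mbf{\lknot}$ is at most $\x_{\nelms-1}$, so $\lknot_k \le \x_{\nelms-1} < \x_\nelms$ directly; this case also supplies the strict inequality in the last assertion without invoking any $\r_i$. The only other edge is $k=1$, where $\rknot_{0}$ is read with the usual boundary convention and the remaining assertions are understood for $k \ge 2$. No step requires solving a system: the whole argument rests on the identity $\rintsum(i) - \lintsum(i) = \r_i + 1$ together with the strict monotonicity of the breakpoints.
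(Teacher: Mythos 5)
Your proof is correct and takes essentially the same approach as the paper's: both arguments reduce the lemma to index bookkeeping with the counting functions $\rintsum$ and $\lintsum$, hinging on the telescoping identity $\rintsum(i)-\lintsum(i)=\r_i-\r_0=\r_i+1$ and the block structure of the knot vectors \cref{eq:knots1,eq:knots2}; the only difference is that you anchor the block containing $\rknot_k$ (resp.\ $\rknot_{k-1}$) while the paper anchors the block containing $\lknot_k$, a mirror image of the same counting argument. Your explicit handling of the boundary cases ($\rknot_{k-1}=\x_\nelms$ and $k=1$) is somewhat more careful than the paper's, but the substance is identical.
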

\begin{proof}
Let $k = \rintsum(i) + \rmult$, where $1 \leq \rmult \leq \p_{i+1}-\r_i$. By inspection, $\lknot_{k} = \x_{i}$. On the other hand,
\begin{align*}
k &= \rintsum(i) +  \rmult = \sum_{j=0}^{i-1}(\p_{j+1}-\r_j) +  \rmult = \sum_{j=1}^{i}(\p_{j}-\r_{j}) + \sum_{j=0}^{i-1}(\r_{j+1}-\r_j) +  \rmult \\
&= \lintsum(i) + \r_i - \r_0 +  \rmult
\geq \begin{dcases}
  \lintsum(i) + 2, & \r_i \geq 0,\\
  \lintsum(i) + 1, & \r_i \geq -1,
\end{dcases}
\end{align*}
since $\r_0 = -1$ and $\rmult \geq 1$.
By inspection, $\rknot_{k} \geq \x_{i+1}$.
Similarly,
\begin{equation*}
\rknot_{k-1} \geq
\begin{dcases}
  \x_{i+1}, & \r_i \geq 0,\\
  \x_{i}, & \r_i \geq -1.
\end{dcases}
\end{equation*}
The above proves the lemma since $\x_{i+1} > \x_i$.
\end{proof}

\begin{lemma}\label{lemma:7}
\begin{align*}
(\lknot_k,\rknot_{k}) \cap (\x_{i-1},\x_i) &=
\begin{cases}
  (\x_{i-1},\x_i), & k = \rintsum(i)-\p_i, \ldots,  \rintsum(i), \\
  \emptyset, & \text{otherwise}.
\end{cases}
\end{align*}
\end{lemma}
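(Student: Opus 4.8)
The plan is to exploit the fact that every knot $\lknot_k$ and every knot $\rknot_k$ coincides with one of the breakpoints $\x_0,\ldots,\x_\nelms$, so that no knot can lie strictly inside the open element $(\x_{i-1},\x_i)$. First I would establish a \textbf{no--partial--overlap dichotomy}: since $\lknot_k$ is a breakpoint, either $\lknot_k \leq \x_{i-1}$ or $\lknot_k \geq \x_i$, and likewise either $\rknot_k \leq \x_{i-1}$ or $\rknot_k \geq \x_i$. If $\lknot_k \geq \x_i$ the support lies to the right of the element and the intersection is empty; if $\rknot_k \leq \x_{i-1}$ it lies to the left and the intersection is again empty. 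The only remaining possibility is $\lknot_k \leq \x_{i-1}$ and $\rknot_k \geq \x_i$, and then, using $\x_{i-1} < \x_i$ (and $\lknot_k < \rknot_k$ from \cref{lemma:6}), one gets $(\lknot_k,\rknot_k) \supseteq (\x_{i-1},\x_i)$, so the intersection is exactly $(\x_{i-1},\x_i)$. Thus the lemma reduces to identifying the indices $k$ for which both inequalities hold.

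Next I would translate each inequality into a bound on $k$ through the block structure of the two knot vectors. In \cref{eq:knots1} the breakpoint $\x_j$ occupies the $u$-knots whose indices lie in $(\rintsum(j),\rintsum(j+1)]$, and the knots satisfying $\lknot_k \leq \x_{i-1}$ are precisely those belonging to the blocks at $\x_0,\ldots,\x_{i-1}$, i.e.\ those with $k \leq \rintsum(i)$. Symmetrically, in \cref{eq:knots2} the breakpoint $\x_j$ occupies the $v$-knots with indices in $(\lintsum(j-1),\lintsum(j)]$, so $\rknot_k \geq \x_i$ holds exactly for the blocks at $\x_i,\ldots,\x_\nelms$, that is, for $k \geq \lintsum(i-1)+1$. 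Hence the nonempty-intersection range is $\lintsum(i-1)+1 \leq k \leq \rintsum(i)$, with the empty-sum convention taking care of the endpoints $i=1$ and $i=\nelms$.

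Finally I would reconcile this range with the stated one by a short telescoping identity. Reindexing gives $\rintsum(i) = \sum_{j=1}^{i}(\p_j - \r_{j-1})$, and subtracting $\lintsum(i-1) = \sum_{j=1}^{i-1}(\p_j - \r_j)$ cancels all but the extremal terms, leaving $\rintsum(i) - \lintsum(i-1) = \p_i - \r_0 = \p_i + 1$ because $\r_0 = -1$. Therefore $\lintsum(i-1)+1 = \rintsum(i) - \p_i$, so the range of admissible $k$ is exactly $k = \rintsum(i)-\p_i,\ldots,\rintsum(i)$, which is $\p_i+1$ indices, matching the count announced before the statement of \cref{lemma:7}.

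The hard part here is not any single deep step but keeping the index bookkeeping honest. Two points deserve care: the no-partial-overlap dichotomy must be argued from the knots being breakpoints rather than assumed, and the counting identities for $\lknot_k \leq \x_{i-1}$ and $\rknot_k \geq \x_i$ must be shown to remain valid when some blocks are empty (those $j$ with $\p_{j+1}=\r_j$ in $\mbf{\lknot}$, or $\p_j = \r_j$ in $\mbf{\rknot}$), which is where a naive ``the knot equals $\x_j$'' phrasing could break down; framing everything in terms of cumulative counts $\rintsum$ and $\lintsum$ sidesteps this. As a shortcut, the telescoping identity can also be obtained from the relation $\rintsum(i) = \lintsum(i) + \r_i + 1$ already implicit in the proof of \cref{lemma:6}.
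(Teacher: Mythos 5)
Your proof is correct, and its core index bookkeeping --- $\lknot_k \le \x_{i-1}$ precisely for $k \le \rintsum(i)$, $\rknot_k \ge \x_i$ precisely for $k \ge \lintsum(i-1)+1$, and the identity $\lintsum(i-1)+1 = \rintsum(i)-\p_i$ --- is exactly what the paper arrives at, namely $k_2 = \rintsum(i)$ and $k_1 = \lintsum(i-1)+1 = \rintsum(i)-\p_i$. The surrounding argument is packaged differently, however. The paper's proof leans on \cref{lemma:6}: it concludes that the supports are non-empty and that a consecutive run of them intersects as $\bigcap_{k=k_1}^{k_2}(\lknot_k,\rknot_k) = (\lknot_{k_2},\rknot_{k_1})$, and then determines the extremal indices for which this common intersection equals $(\x_{i-1},\x_i)$ ``by inspecting the knot vectors''. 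You bypass \cref{lemma:6} almost entirely: since every entry of $\mbf{\lknot}$ and $\mbf{\rknot}$ is a breakpoint, no knot can lie strictly inside the open element, so the intersection is all-or-nothing, and the lemma reduces to the pair of inequalities $\lknot_k\le\x_{i-1}$ and $\rknot_k\ge\x_i$. Your route buys three things: an explicit treatment of the ``otherwise $\emptyset$'' branch, which the paper leaves implicit (a priori, an interval whose endpoints were not known to be breakpoints could overlap the element only partially); an actual derivation, by telescoping, of the identity $\lintsum(i-1)+1 = \rintsum(i)-\p_i$ that the paper merely asserts; and robustness to empty knot blocks (when $\p_{j+1}=\r_j$ or $\p_j=\r_j$), which your cumulative-count phrasing handles cleanly. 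The paper's route buys brevity, since \cref{lemma:6} was proved immediately beforehand and its consequences (non-emptiness and the run-intersection formula) are stated anyway in the text preceding both lemmas.
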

\begin{proof}
It follows from the previous lemma that the intervals $(\lknot_k, \rknot_{k})$, $k=1, \ldots, \N$, are non-empty and satisfy
\begin{align*}
	\bigcap_{k=k_1}^{k_2} (\lknot_k, \rknot_{k}) = (\lknot_{k_2}, \rknot_{k_1}).
\end{align*}
The minimum $k_1$ and maximum $k_2$ at which $(\lknot_{k_2}, \rknot_{k_1}) = (\x_{i-1}, \x_i)$ can be found by inspecting the knot vectors \cref{eq:knots1,eq:knots2}. It follows that $k_2 = \rintsum(i)$ and $k_1 = \lintsum(i-1)+1 = \rintsum(i) - \p_i$.
\end{proof}

In order to measure the local continuity of B-spline-like basis functions at break points, we define the following quantities. For $k=1,\ldots,\N$, let $\lknot_k = \x_i$ and $\rknot_k = \x_j$, and set
\begin{equation}
\label{eq:local-supersmoothness-0}
\begin{aligned}
\lsmooth(k) &:= \p_{i+1} -1 - \max\{l\geq 0: \lknot_{k} = \lknot_{k+l}\}, \\
\rsmooth(k) &:= \p_{j} -1 - \max\{l\geq0: \rknot_{k} = \rknot_{k-l}\}.
\end{aligned}
\end{equation}
Note that from the knot vector definitions \cref{eq:knots1,eq:knots2} it can be deduced that
\begin{equation}\label{eq:local-supersmoothness}
\begin{aligned}
\lsmooth(k) &= \r_i + \max\{l\geq 0: \lknot_{k} = \lknot_{k-l}\} \geq r_i, \\
\rsmooth(k) &= \r_j + \max\{l\geq0: \rknot_{k} = \rknot_{k+l}\} \geq r_j.
\end{aligned}
\end{equation}
With this notation in place, we can give a local dimension formula.

\begin{lemma}\label{lem:local-dim}
The restriction of the spline space $\splSpacerp$ to the interval $[\lknot_k, \rknot_{k}] $ has dimension $\lsmooth(k)+\rsmooth(k)+3$.
\end{lemma}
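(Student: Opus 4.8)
The plan is to recognize the restricted space $\splSpacerp\big|_{[\lknot_k,\rknot_k]}$ as a GT-spline space in its own right, on a sub-partition of $[\a,\b]$, and then to apply the dimension formula for GT-spline spaces (the proposition stated above) and simplify. Write $\lknot_k = x_i$ and $\rknot_k = x_j$; by \cref{lemma:6} we have $\lknot_k<\rknot_k$, hence $i<j$, so $[\lknot_k,\rknot_k]=[x_i,x_j]$ carries the sub-partition $x_i<x_{i+1}<\cdots<x_j$ with pieces drawn from $\ECT{\p_{i+1}}{(i+1)},\ldots,\ECT{\p_j}{(j)}$ and with interior smoothness values $r_{i+1},\ldots,r_{j-1}$ inherited from $\R$. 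The first task is to prove that restriction to $[x_i,x_j]$ maps $\splSpacerp$ \emph{onto} exactly this sub-space, i.e.\ that no extra smoothness is forced at the two endpoints $x_i,x_j$.

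The inclusion of the restriction into the sub-space is immediate from \cref{def:spline-space}. For surjectivity I would argue by extension: given any spline $g$ on $[x_i,x_j]$ satisfying the interior smoothness conditions, I build a global $s\in\splSpacerp$ with $s|_{[x_i,x_j]}=g$ by successively defining the outside pieces. To extend past $x_i$, I choose the piece on $[x_{i-1},x_i]$ in $\ECT{\p_i}{(i)}$ whose derivatives of orders $0,\ldots,r_i$ at $x_i$ match those of $g$; since $\dim\ECT{\p_i}{(i)}=\p_i+1$ and $r_i+1\le\p_i+1$ (because $r_i\le\min\{\p_i,\p_{i+1}\}$ by \cref{eq:smoothness}), such a piece exists by the Hermite solvability in \cref{def:ET}. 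Iterating this to the left down to $x_0$ and symmetrically to the right up to $x_m$ — where nothing is imposed because $r_0=r_m=-1$ — produces the required $s$. Thus the restriction equals the sub-space, which is a bona fide GT-spline space (its interior smoothness conditions remain linearly independent, each local space being an ECT-system), so the dimension formula applies and gives $\dim = \p_j+1+\sum_{l=i+1}^{j-1}(\p_l-r_l)$.

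It remains to match this with $\lsmooth(k)+\rsmooth(k)+3$. Reading off \cref{eq:knots1,eq:knots2}, the block of $x_i$ in $\mbf{\lknot}$ begins at index $\rintsum(i)+1$ and the block of $x_j$ in $\mbf{\rknot}$ ends at index $\lintsum(j)$, so \cref{eq:local-supersmoothness} yields $\lsmooth(k)=r_i+k-\rintsum(i)-1$ and $\rsmooth(k)=r_j+\lintsum(j)-k$; summing, $\lsmooth(k)+\rsmooth(k)+3 = r_i+r_j+\lintsum(j)-\rintsum(i)+2$. Expanding $\lintsum(j)-\rintsum(i)=\sum_{l=1}^{j}(\p_l-r_l)-\sum_{l=1}^{i}(\p_l-r_{l-1})$ splits into $\sum_{l=i+1}^{j}(\p_l-r_l)$ plus the telescoping sum $\sum_{l=1}^{i}(r_{l-1}-r_l)=r_0-r_i$, and invoking $r_0=-1$ collapses the whole expression to $\p_j+1+\sum_{l=i+1}^{j-1}(\p_l-r_l)$, which is exactly the dimension found above. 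I expect the main obstacle to be clerical rather than conceptual: precisely locating each knot block inside the two knot vectors and pushing the telescoping through without index slips, where the boundary convention $r_0=r_m=-1$ is exactly what makes the endpoint contributions cancel.
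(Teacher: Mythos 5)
Your proof is correct and follows essentially the same route as the paper's: both identify the dimension of the restriction as the sum of local dimensions minus the interior smoothness constraints, $\p_j+1+\sum_{l=i+1}^{j-1}(\p_l-\r_l)$, and then match this against the knot-vector expression for $\lsmooth(k)+\rsmooth(k)+3$, the only difference being which of the two characterizations \cref{eq:local-supersmoothness-0} or \cref{eq:local-supersmoothness} drives the index bookkeeping. Your Hermite-extension argument for surjectivity of the restriction map is a welcome piece of extra rigor on a point the paper's proof asserts implicitly, not a change of approach.
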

\begin{proof}
Let $\lknot_k = \x_i$ and $\rknot_k = \x_j$, $j>i$. The restriction of the spline space $\splSpacerp$ to the interval $[\lknot_k, \rknot_{k}]$ has dimension
\begin{equation*}
\sum_{l=i+1}^{j}(\p_{l}+1)-\sum_{l=i+1}^{j-1}(\r_l+1)=\p_j+1+\sum_{l=i+1}^{j-1}(\p_{l}-\r_l).
\end{equation*}
On the other hand, from \cref{eq:knots1,eq:knots2,eq:local-supersmoothness-0}
we get
\begin{align*}
k+\p_{i+1}-1-\lsmooth(k)&=k+\max\{l\geq 0: \lknot_{k} = \lknot_{k+l}\}=\sum_{l=0}^{i}(\p_{l+1}-\r_l),
\\
k-\p_j+1+\rsmooth(k)&=k-\max\{l\geq0: \rknot_{k} = \rknot_{k-l}\}=1+\sum_{l=1}^{j-1}(\p_l-\r_l),
\end{align*}
thus
\begin{equation*}
k-\lsmooth(k)=2+\sum_{l=1}^{i}(\p_{l}-\r_l), \quad k+\rsmooth(k)=\sum_{l=1}^{i}(\p_{l}-\r_l)+\sum_{l=i+1}^{j-1}(\p_{l}-\r_l)+\p_j.
\end{equation*}
Subtracting the above expressions gives the result.
%$$
%\lsmooth(k)+\rsmooth(k)+3=\sum_{l=i+1}^{j-1}(\p_{l}-\r_l)+\p_j+1.
%$$
\end{proof}

We now show the existence of a basis of the space $\splSpacerp$ with several nice properties.

\begin{theorem}[Unit-integral B-splines] \label{thm:M-splines}
Assume there exist admissible weights for the space $\splSpacerp$. Then, there exists a basis of the space $\splSpacerp$ consisting of the functions $\{\MS_k, \; k=1,\ldots,\N\}$, with the following properties:
\begin{align}
& \MS_k(\x) > 0, 	\quad \text{for all }  \x \in  (\lknot_k, \rknot_{k}), 					& (\text{Non-negativity})		\label{eq:support1M} 	\\
& \MS_k(\x) = 0, 	\quad \text{for all }  \x \notin [\lknot_k, \rknot_{k}],			& (\text{Local support})			\label{eq:support2M}	\\
& \int_{\lknot_k}^{\rknot_{k}} \MS_k(\x)\dint\x = 1, 									& (\text{Unit integral})			\label{eq:uiM}	\\
& \MS_k \text{ is exactly } C^{\lsmooth(k)}  \text{ at } \x = \lknot_k,		& (\text{Start-point smoothness})	\label{eq:continuity1M}	\\
& \MS_k \text{ is exactly } C^{\rsmooth(k)}  \text{ at } \x = \rknot_k, 		& (\text{End-point smoothness})	\label{eq:continuity2M} \\
& \Span{\left. \MS_{k} \right|_{\I_i}}_{k=\rintsum(i)-\p_i}^{\rintsum(i)} \equiv \ECT{\p_i}{(i)}.		& (\text{Local linear independence})  \label{eq:basisM}
\end{align}
\end{theorem}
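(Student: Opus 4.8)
The plan is to build each $\MS_k$ as the essentially unique nonzero element of $\splSpacerp$ supported on $[\lknot_k,\rknot_k]$ that vanishes to the prescribed orders at the two endpoints, and then to extract positivity and exact smoothness from a zero-counting argument. Fix $k$ with $\lknot_k=\x_i$ and $\rknot_k=\x_j$, and work in the restriction $\left.\splSpacerp\right|_{[\lknot_k,\rknot_k]}$, which by \cref{lem:local-dim} has dimension $\lsmooth(k)+\rsmooth(k)+3$. Extending a function on this interval by zero to the rest of $[\a,\b]$ is $C^{\lsmooth(k)}$ at $\lknot_k$ exactly when $D^l_+s(\x_i)=0$ for $l=0,\ldots,\lsmooth(k)$, and $C^{\rsmooth(k)}$ at $\rknot_k$ exactly when $D^l_-s(\x_j)=0$ for $l=0,\ldots,\rsmooth(k)$. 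These are $\lsmooth(k)+\rsmooth(k)+2$ linear conditions, so their common kernel has dimension at least one; I define $\MS_k$ to be a nonzero element of it, fixing the normalization at the end. This already delivers the local support \cref{eq:support2M}.

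The heart of the argument is a Budan--Fourier type zero-counting estimate for GT-splines under the admissibility hypothesis: a nonzero element of $\splSpacerp$ supported on $[\lknot_k,\rknot_k]$ cannot have more zeros (counted with appropriate multiplicities at interior breakpoints and endpoints) than its local dimension permits. This is precisely where admissible weights are used, since they guarantee that the glued ECT-pieces retain the sign and zero behavior of a genuine extended Tchebycheff system (cf.\ \cite{Buchwald:2003,Nurnberger:1984}). Three conclusions follow simultaneously. First, the $\lsmooth(k)+\rsmooth(k)+2$ endpoint functionals are linearly independent, so the candidate space is exactly one-dimensional and $\MS_k$ is determined up to a scalar. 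Second, $\MS_k$ cannot vanish to order $\lsmooth(k)+1$ at $\lknot_k$ or to order $\rsmooth(k)+1$ at $\rknot_k$, for otherwise the accumulated endpoint vanishing would exceed the zero budget and force $\MS_k\equiv0$; this yields the \emph{exactly} $C^{\lsmooth(k)}$ and $C^{\rsmooth(k)}$ claims \cref{eq:continuity1M,eq:continuity2M}. Third, $\MS_k$ has no sign change on $(\lknot_k,\rknot_k)$, since an interior sign change would contribute a zero beyond what the endpoint conditions already consume; hence $\MS_k$ is one-signed, and fixing the sign gives \cref{eq:support1M}.

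With the individual functions in hand, I normalize by dividing by $\int_{\lknot_k}^{\rknot_k}\MS_k(\x)\dint\x$, which is positive by the previous step, giving \cref{eq:uiM}. For \cref{eq:basisM}, \cref{lemma:7} identifies precisely the $\p_i+1$ indices $k=\rintsum(i)-\p_i,\ldots,\rintsum(i)$ for which $(\lknot_k,\rknot_k)\cap(\x_{i-1},\x_i)=(\x_{i-1},\x_i)$, so each $\left.\MS_k\right|_{\I_i}$ lies in $\ECT{\p_i}{(i)}$; since there are $\p_i+1$ such functions and $\dim\ECT{\p_i}{(i)}=\p_i+1$, it remains to prove they are linearly independent on $\I_i$, which again follows from the zero count because each restriction is distinguished by the exact order of vanishing it inherits at $\x_{i-1}$ and $\x_i$. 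Finally, global linear independence of $\{\MS_k\}_{k=1}^{\N}$ is immediate from \cref{eq:basisM} applied on each $\I_i$, and since $\dim\splSpacerp=\N$ the set $\{\MS_k\}$ is a basis.

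The step I expect to be the main obstacle is the zero-counting estimate for GT-splines with admissible weights, because the exactness of the smoothness, the non-negativity, and the local independence all rest on it; once it is available in the correct form, the remaining steps are bookkeeping with the two knot vectors \cref{eq:knots1,eq:knots2} and the dimension formulas already established.
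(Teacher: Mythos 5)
Your construction of the individual functions $\MS_k$ is sound and is essentially the paper's own argument: the paper also works in the restriction of $\splSpacerp$ to $[\lknot_k,\rknot_{k}]$, obtains existence and uniqueness from well-posedness of Hermite interpolation under admissible weights (\cite[Theorem~3.1]{Buchwald:2003}, rather than your kernel-dimension count --- a cosmetic difference), and derives positivity and the \emph{exactness} of the endpoint smoothness from the same zero-counting result (\cite[Theorem~2.5]{Buchwald:2003}, \cite[Theorem~4.2]{Nurnberger:1984}) that you invoke as a Budan--Fourier type estimate. Up to and including the unit-integral normalization, your proof is a faithful reconstruction, and it is fair to cite the zero bound from the literature since the paper does the same.

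The genuine gap is in your proof of \cref{eq:basisM}, and through it, of the basis property. You claim the $\p_i+1$ restrictions $\left.\MS_k\right|_{\I_i}$, $k=\rintsum(i)-\p_i,\ldots,\rintsum(i)$, are linearly independent because each is ``distinguished by the exact order of vanishing it inherits at $\x_{i-1}$ and $\x_i$.'' This is false in general: any $\MS_k$ whose support $[\lknot_k,\rknot_{k}]$ contains $[\x_{i-1},\x_i]$ strictly in its interior is, by \cref{eq:support1M}, positive at both $\x_{i-1}$ and $\x_i$, so its vanishing orders at both endpoints are zero. Already for ordinary cubic B-splines with simple knots (a special case of this theorem with all weights equal to one), two of the four functions alive on a knot interval are of this type and carry identical, trivial endpoint vanishing data; your argument cannot separate them. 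The paper avoids this by reversing the order of the two independence statements: it first proves \emph{global} linear independence by the argument of \cite[Theorem~4.4]{Nurnberger:1984} --- a left-to-right sweep in which, on each new interval, every function not yet eliminated has its left knot exactly at that interval's left endpoint, where the exact vanishing orders \emph{are} pairwise distinct --- then concludes from the dimension count that the $\N$ functions span $\splSpacerp$, and only then deduces local independence: since only those $\p_i+1$ functions are nonzero on $\I_i$ and the restriction of $\splSpacerp$ to $\I_i$ is all of $\ECT{\p_i}{(i)}$, their restrictions span a $(\p_i+1)$-dimensional space and hence must be independent. Your reduction of global independence to \cref{eq:basisM} is fine as a reduction, but it rests on the flawed local step; the repair is to run the global argument first (or to use a genuinely different tool, such as dual functionals), not endpoint vanishing orders within a single interval.
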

\begin{proof}
Consider the restriction of the spline space $\splSpacerp$ to the interval $[\lknot_k, \rknot_{k}] $. Since there exist admissible weights for $\splSpacerp$, we know that \cite[Theorem~3.1]{Buchwald:2003} ensures the existence of a unique function $s_k$ in such space satisfying the following Hermite interpolation problem:
\begin{align*}
D^{j}_{+} s_k(\lknot_k) &= 0, \quad j=0, \ldots, \lsmooth(k), \\
D^{\lsmooth(k)+1}_{+} s_k(\lknot_k) &=1,  \\
D^{j}_{-} s_k(\rknot_k) &= 0, \quad j=0, \ldots, \rsmooth(k).
\end{align*}
Moreover, from \cref{lem:local-dim} and \cite[Theorem~2.5]{Buchwald:2003} it follows that the function $s_k$ has no additional zeros (counting multiplicity) in $[\lknot_k, \rknot_{k}]$; see also \cite[Theorem~4.2]{Nurnberger:1984}. Therefore, taking
\begin{equation*}
\MS_k(\x):=
\begin{dcases}
\frac{s_k(\x)}{\int_{\lknot_k}^{\rknot_{k}} s_k(\y)\dint\y },& \lknot_k\leq \x<\rknot_{k},
\\
0, & \text{otherwise},
\end{dcases}
\end{equation*}
it can be easily checked that $\MS_k$ satisfies \cref{eq:support1M,eq:support2M,eq:uiM,eq:continuity1M,eq:continuity2M}.
Note that \cref{eq:local-supersmoothness} implies that $\MS_k$ can have a higher smoothness at the end points of its support ($\lknot_k$ and $\rknot_{k}$) than required by the space $\splSpacerp$.

With the same line of arguments of \cite[Theorem 4.4]{Nurnberger:1984} we can show that the  functions in the set ${\cal M}:=\{\MS_k, \; k=1,\ldots,\N\}$ are linearly independent and thus span the space $\splSpacerp$.
Finally, \cref{lemma:7} shows that $\{M_k, \; k=\rintsum(i)-\p_i, \ldots, \rintsum(i)\}$ are the only functions in ${\cal M}$ which are non-zero functions on the interval $[\x_{i-1}, \x_i)$. Consequently, these $\p_i+1$ functions span \ECT{\p_i}{(i)} and thus  they are locally linearly independent.
\end{proof}

Under a proper assumption on the weights related to the local ECT-spaces, one can also define a basis that forms a partition of unity, so
mimicking all properties of the standard polynomial B-spline basis obtained by the Cox--de Boor recursion formula \cite{Deboor:1978}.

\begin{theorem}[GTB-splines] \label{thm:B-splines}
Assume there exist admissible weights for the space $\splSpacerp$ and for all the \ECT{\p_i}{(i)} we have
\begin{equation}
\label{eq:weights-pou}
w_0^{(i)} =1, \quad i=1,\ldots,\nelms.
\end{equation}
Then, there exists a basis of the space $\splSpacerp$ consisting of the functions $\{\BS_k, \; k=1, \ldots, \N\}$, with the following properties:
\begin{align}
& \BS_k(\x) > 0, 	\quad \text{for all }  \x \in  (\lknot_k, \rknot_{k}), 					& (\text{Non-negativity})		\label{eq:support1} 	\\
& \BS_k(\x) = 0, 	\quad \text{for all }  \x \notin [\lknot_k, \rknot_{k}], 					& (\text{Local support})			\label{eq:support2}	\\
& \sum_{k=1}^\N \BS_k(\x) = 1, \quad \text{for all }	\x \in[\a,\b],								& (\text{Partition of unity})			\label{eq:pou}	\\
& \BS_k \text{ is exactly } C^{\lsmooth(k)}  \text{ at } \x = \lknot_k,		& (\text{Start-point smoothness})	\label{eq:continuity1}	\\
& \BS_k \text{ is exactly } C^{\rsmooth(k)}  \text{ at } \x = \rknot_k, 		& (\text{End-point smoothness})	\label{eq:continuity2} \\
& \Span{\left. \BS_{k} \right|_{\I_i}}_{k=\rintsum(i)-\p_i}^{\rintsum(i)} \equiv \ECT{\p_i}{(i)}.		& (\text{Local linear independence})  \label{eq:basis}
\end{align}
\end{theorem}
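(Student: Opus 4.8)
My plan is to reduce the statement to a single positivity fact about a change-of-normalization constant. The key observation is that the hypothesis \cref{eq:weights-pou}, $w_0^{(i)}=1$, forces the constant function $1$ into the space: indeed, by \cref{eq:gen-powers} the first generalized power on each interval is $\uu_0^{(i)}=w_0^{(i)}=1$, so $1\in\ECT{\p_i}{(i)}$ for every $i$, and since $1$ is $C^\infty$ it trivially satisfies all the jump conditions defining $\splSpacerp$. Hence $1\in\splSpacerp$. Because the unit-integral functions $\{\MS_k\}_{k=1}^{\N}$ from \cref{thm:M-splines} form a basis of $\splSpacerp$, I can expand $1=\sum_{k=1}^{\N}c_k\MS_k$ with unique real coefficients $c_k$, and I propose to define $\BS_k:=c_k\MS_k$.

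Granting $c_k>0$ for all $k$, the six asserted properties follow immediately. Partition of unity is exactly $\sum_{k=1}^{\N}\BS_k=\sum_{k=1}^{\N}c_k\MS_k=1$ on $[\a,\b]$. Non-negativity \cref{eq:support1}, local support \cref{eq:support2}, and the exact start-point and end-point smoothness \cref{eq:continuity1,eq:continuity2} are invariant under multiplication by a positive constant, so they transfer verbatim from the corresponding properties of $\MS_k$ in \cref{thm:M-splines}. Likewise, since each active $c_k\neq0$, on every interval $\I_i$ the set $\{\BS_k|_{\I_i}\}_{k=\rintsum(i)-\p_i}^{\rintsum(i)}$ spans the same space as $\{\MS_k|_{\I_i}\}$, namely $\ECT{\p_i}{(i)}$, giving local linear independence \cref{eq:basis}. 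Thus the entire theorem hinges on establishing $c_k>0$.

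The positivity of the $c_k$ is the main obstacle. The two boundary coefficients are easy: evaluating $1=\sum_k c_k\MS_k$ at $\x=\a$ only $\MS_1$ survives (for $k\geq2$ one has $\lsmooth(k)\geq0$, so $\MS_k$ vanishes at $\a$), and since $\MS_1(\a)>0$ we get $c_1=1/\MS_1(\a)>0$; the symmetric argument at $\x=\b$ gives $c_{\N}>0$. The interior coefficients cannot be isolated this way, because evaluating or differentiating the identity at a single breakpoint couples several $c_k$ with signs one cannot control term by term. To handle these I would introduce the dual (de Boor--Fix type) functionals $\lambda_k$ biorthogonal to $\{\MS_k\}$, so that $c_k=\lambda_k(1)$; when applied to a constant, all higher-order derivative terms in $\lambda_k$ annihilate and only a zeroth-order contribution remains, whose sign is fixed positive by the sign structure of the underlying ECT-system. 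An equivalent route is to invoke the total positivity of the basis $\{\MS_k\}$, inherited through the construction from the Bernstein-like generators of the local ECT-spaces. Either way, this is the step where the full ECT structure is genuinely used, beyond the mere existence of the unit-integral basis, and I expect the careful verification of the sign to be the technical heart of the argument.
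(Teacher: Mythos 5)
Your reduction is sound as far as it goes: by \cref{eq:support-characterisation}, any partition-of-unity basis must have the form $\BS_k = c_k \MS_k$ with $1 = \sum_k c_k \MS_k$, the six properties do transfer verbatim under positive scaling, and your endpoint argument for $c_1, c_\N > 0$ is correct. The problem is that the interior positivity $c_k > 0$ is not a technical loose end --- it \emph{is} the theorem. Again by \cref{eq:support-characterisation}, the existence of a basis with properties \cref{eq:support1}--\cref{eq:basis} is exactly equivalent to the statement that the expansion of the constant $1$ in the unit-integral basis has positive coefficients, so your proposal reduces the theorem to itself and then defers the remaining content to two routes, neither of which is available. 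De Boor--Fix-type dual functionals with the sign structure you describe do not exist off the shelf for multi-degree GT-splines; constructing them requires a Marsden-like identity, which in this setting presupposes the normalized basis one is trying to build. Likewise, total positivity of $\{\MS_k\}$ is established nowhere in the paper or in the cited literature for this generality, and even granting it, passing from total positivity to positivity of the coefficients of $1$ is precisely the ``normalized B-basis'' property in question --- the argument is circular.

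The paper closes this gap by a genuinely different construction that uses the hypothesis \cref{eq:weights-pou} far more strongly than you do. You use $w_0^{(i)}=1$ only to conclude $1 \in \splSpacerp$; the paper uses it, via \cref{eq:gen-powers-der}, to conclude that the space of \emph{derivatives} of functions in $\splSpacerp$ is itself a GT-spline space of dimension $\N-1$ with admissible weights $w_1^{(i)},\ldots,w_{\p_i}^{(i)}$. Applying \cref{thm:M-splines} to that derivative space yields unit-integral functions $\MStilde_k$ supported on $[\lknot_k,\rknot_{k-1}]$, and the GTB-splines are then defined by telescoping integration, $\BS_k(\x) := \int_\a^\x \MStilde_k - \int_\a^\x \MStilde_{k+1}$ (with the obvious modifications for $k=1,\N$). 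Partition of unity is immediate from the telescoping sum, the smoothness properties follow from those of $\MStilde_k$, and --- this is the step your proposal lacks a substitute for --- positivity follows because $D_+^{\lsmooth(k)+1}\BS_k(\lknot_k)>0$ together with the zero-counting theorem for ECT-spaces (Theorem~2.5 of the Buchwald--M\"uhlbach reference) rules out any further zeros of $\BS_k$ in its support. If you want to salvage your approach, the honest completion would be to prove $c_k>0$ by comparing $\MS_k$ against the integrated functions above, at which point you have reproduced the paper's argument.
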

\begin{proof}
Without loss of generality, we can assume $\r_i\geq 0$, $i=1,\ldots, \nelms-1$.
Indeed, if $\r_l=-1$ for some $1\leq l\leq \nelms-1$ then the spline space $\splSpacerp$ can be decomposed into two disconnected spaces defined on $[\x_0,\x_l)$ and $[\x_l,\x_\nelms]$, respectively.
Recall that $\r_i\geq 0$, $i=1,\ldots, \nelms-1$, implies that $\lknot_k<\rknot_{k-1}$, $k=1,\ldots,\N$ (see \cref{lemma:6}).
Furthermore, if $\p_i=0$ then $\BS_k(\x)=1$ for $\x\in[\x_{i-1},\x_i)\subseteq[\lknot_k, \rknot_{k})$.
For $i=1,\ldots,\nelms$, let
\ECTtilde{\p_i}{(i)} denote the space of the derivatives of the functions belonging to $\ECT{\p_i}{(i)}$. For $\p_i>0$, from \cref{eq:weights-pou,eq:gen-powers-der} it follows that
\ECTtilde{\p_i}{(i)} is an ECT-space of dimension $\p_i$ on $\I_i$ identified by the weights
\begin{equation}\label{adm-w:der}
\widehat w_j^{(i)}:=w_{j+1}^{(i)}\in C^{\p_i-1-j}, \quad j=0 \ldots,\p_i-1.
\end{equation}

Consider now the spline space
\begin{equation} \label{eq:spline-space-tilde}
\begin{aligned}
%\splSpacerpDer :=  \Big\{ [\a, \b] \xrightarrow{s} \RR : &
\splSpacerpDer :=  \Big\{ {\Rd s:} \,& {\Rd [\a, \b] \rightarrow \RR :}
\left. s \right|_{\I_i} \in \ECTtilde{\p_i}{(i)},\; i = 1,\ldots,\nelms  \;\text{ and } \; \\
&\jump{\x_i, j}{s} = 0,\; j = 0,\ldots,\r_i-1, \; i = 1,\ldots,\nelms-1   \Big\},
\end{aligned}
\end{equation}
% This space has dimension $\N-1$, and in view of \cref{th:M-splines}, it admits a basis of functions
which has dimension $\N-1$. 
Since the weights defined in \cref{adm-w:der} are admissible for $\splSpacerpDer$,
we can apply \cref{thm:M-splines} to all (non-trivial) disconnected parts of $\splSpacerpDer$, and so the space admits a basis of functions $\{\MStilde_k, \; k=2,\ldots,\N\}$ such that
\begin{align}
& \MStilde_k(\x) > 0, 	\quad \text{for all }  \x \in  (\lknot_k, \rknot_{k-1}), 	\label{eq:posMtilde}	    \\
& \MStilde_k(\x) = 0, 	\quad \text{for all }  \x \notin [\lknot_k, \rknot_{k-1}],	 \label{eq:support1Mtilde}		\\
& \int_{\lknot_k}^{\rknot_{k-1}} \MStilde_k(\x)\dint\x = 1, 				             \label{eq:uiMtilde}    \\
& \MStilde_k \text{ is exactly } C^{\lsmooth(k)-1}  \text{ at } \x = \lknot_k,	 \label{eq:continuity1Mtilde}		\\
& \MStilde_{k} \text{ is exactly } C^{\rsmooth(k-1)-1}  \text{ at } \x = \rknot_{k-1}. 	  \label{eq:continuity2Mtilde}
\end{align}
We then define the following functions belonging to $\splSpacerp$:
\begin{align*}
\BS_1(\x) &:= 1-\int_{\a}^\x \MStilde_{2}(\y)\dint\y,
\\
\BS_k(\x) &:= \int_{\a}^\x \MStilde_k(\y)\dint\y - \int_{\a}^\x \MStilde_{k+1}(\y)\dint\y, \quad  k=2, \ldots, \N-1,
\\
\BS_\N(\x) &:= \int_{\a}^\x \MStilde_\N(\y)\dint\y.
\end{align*}
A direct inspection shows that the above functions satisfy \cref{eq:pou}. From  \cref{eq:support1Mtilde,eq:continuity1Mtilde,eq:continuity2Mtilde}, we deduce \cref{eq:continuity1,eq:continuity2}. Moreover, \cref{eq:continuity1Mtilde,eq:support1Mtilde} imply
\begin{equation}
\label{eq:pos-der}
D^{\lsmooth(k)+1}_{+}\BS_k(\lknot_k) >0.
\end{equation}
 For $k=1,\ldots, \N$, from \cite[Theorem 2.5]{Buchwald:2003} it follows that the function $\BS_k$ has no additional zeros (counting multiplicity) in $[\lknot_k, \rknot_{k}]$. Then, \cref{eq:pos-der} gives \cref{eq:support1}.
 Linear independence of the functions $\{\BS_k, \; k=1, \ldots, \N\}$  follows  by applying the same  line of arguments as in the proof of \cref{thm:M-splines}.
\end{proof}

\begin{remark}
The assumption in \cref{eq:weights-pou} is equivalent to the fact that each $\ECT{\p_i}{(i)}$ contains the constants for $i=1,\ldots,m$.
\end{remark}

\begin{remark}\label{eq:support-characterisation}
Each of the splines $\MS_k$ and $\BS_k$, $k=1, \ldots \N$, in the space $\splSpacerp$ are uniquely defined, up to a constant multiple, by a triple $\tripleA{k}{k}$.
Indeed, from \cref{lem:local-dim} it follows that the end-point smoothness conditions (see \cref{eq:continuity1M}--\cref{eq:continuity2M} and \cref{eq:continuity1}--\cref{eq:continuity2}) uniquely determine the spline function, up to a constant multiple. The additional constraint of unit integral \cref{eq:uiM} or partition of unity \cref{eq:pou}, respectively, specifies this constant.
\end{remark}

%\begin{remark}
Properties \cref{eq:support1}--\cref{eq:basis} are very important in both geometric modeling and isogeometric analysis; they make the set of GTB-splines $\{\BS_k, \; k=1, \ldots, \N\}$ the basis of choice for the space $\splSpacerp$ in those applications. Although the theory of GTB-splines has been established for many years \cite{Buchwald:2003, Nurnberger:1984}, a stable and efficient way for computing the GTB-spline basis functions, and performing fundamental operations such as knot insertion, has been lacking.
The next sections, containing the original contribution of the paper, focus on these issues. We start by describing some integral recurrence relations that are suited for symbolic computation, and afterwards we develop a procedure based on knot insertion that is suited for numerical evaluation. 
% In the next section we describe some integral recurrence relations which could be used for their evaluation. Even though they are suited for symbolic computation, they might lack stability in numerical computation. Afterwards, in \cref{sec:4} we introduce a method for knot insertion, and in \cref{sec:5}, we show how to apply knot insertion recursively in order to compute a \Bezier extraction operator. This operator allows for efficient and stable evaluation of GTB-splines, even when the section spaces are of  non-uniform degree and non-polynomial.
%\end{remark}

% Integral recurrence relations
\section{Integral recurrence relations} \label{sec:3_rec}
In this section we present some integral recurrence relations which could be used for symbolic computation of the GTB-spline basis $\{\BS_k, \; k=1, \ldots, \N\}$. From the proof of \cref{thm:B-splines} we know that they can be obtained by integrating certain unit-integral B-splines of lower degree and smoothness $\{\MStilde_k, \; k=2,\ldots,\N\}$. We also note that any set $\{\BStilde_k, \; k=2,\ldots,\N\}$ of functions  %scaled B-splines
in the same space satisfying \cref{eq:posMtilde,eq:support1Mtilde,eq:continuity1Mtilde,eq:continuity2Mtilde} can be easily converted into the unit-integral B-splines by
\begin{equation*}
\MStilde_k(\x)=\frac{\BStilde_k(\x)}{\int_{\a}^{\b} \BStilde_{k}(\y)\dd\y}.
\end{equation*}
These form the ingredients for the considered integral recurrence relations. With $\p := \max_{1\leq i \leq\nelms} \p_i$, we will discuss in the following how to recursively construct the GTB-splines $\BS_k = \BS_{k,\p}$, $k=1, \ldots, \N$, under the same assumptions as in \cref{thm:B-splines}. We will consider a local and a global recursive construction.

We start with a pointwise recurrence. It is a direct extension of the integral relation presented in \cite{Beccari:2017,Toshniwal:2018b} for polynomial splines of non-uniform degree.
For $\q=0,\ldots, \p$ and $k=\p-\q+1,\ldots,\N$, the spline $\BS_{k,\q}$ is supported on the interval $[\lknot_k,\rknot_{k - \p+\q}]$ and defined at $\x \in [\x_{i-1}, \x_{i}) \subset [\lknot_k,\rknot_{k - \p+\q}]$ as follows:
\begin{equation}\label{eq:rec-local}
\BS_{k,\q}(\x) :=
\begin{dcases}
    w_{\p_i}^{(i)}(\x), & \q = \p - \degree_{i},\\
    w_{\p-\q}^{(i)}(\x)\int_{\a}^\x \biggl[\dfrac{\BS_{k,\q-1}(\y)}{\bspint_{k,\q-1}} - \dfrac{\BS_{k+1,\q-1}(\y)}{\bspint_{k+1,\q-1}}\biggr] \dd \y, & \q > \p - \degree_{i},\\
    0, & \text{otherwise},
\end{dcases}
\end{equation}
where
\begin{equation}\label{eq:integral}
\bspint_{j,\q-1} := \int_{\a}^{\b} \BS_{j, \q-1}(\y)\dd\y.
\end{equation}
In the above we assumed that any undefined $\BS_{j, \q-1}$ with $j<\p-\q+2$ or $j>\N$ must be regarded as the zero function,
and we used the convention that if $\bspint_{j, \q-1}=0$ then
\begin{equation} \label{eq:limit}
\int_{\a}^\x \dfrac{\BS_{j,\q-1}(\y)}{\bspint_{j,\q-1}}\dd\y :=
\begin{cases}
1, & \x \geq \lknot_{j} \text{~and~} j\leq \N,\\
0, & \text{otherwise}.
\end{cases}
\end{equation}

The relation in \cref{eq:rec-local} builds up the GTB-splines on each of the intervals $[\x_{i-1}, \x_{i})$ separately; hence, it is a \emph{local recurrence}.
When all the degrees $\p_i$ are uniform, GTB-splines are usually defined by means of a global recurrence formula; see, e.g., \cite{Lyche:2019}.
In order to produce a \emph{global recurrence}, we first define a global set of weight functions $\{w_j, \; j=0,\ldots,\p\}$ by
\begin{equation} \label{eq:weights-global}
w_j(\x) :=
\begin{cases}
    w_{j}^{(i)}(\x), & j\leq \p_i, \\
    0, & \text{otherwise},
\end{cases}
\quad \x \in [\x_{i-1}, \x_{i}), \quad i=1,\ldots,\nelms.
\end{equation}
This allows us to redefine $\BS_{k,\q}$ globally. For $\q=0,\ldots, \p$ and $k=\p-\q+1,\ldots,\N$, the spline $\BS_{k,\q}$ can be evaluated at $\x \in [\a,\b)$ as follows:
\begin{equation} \label{eq:rec-global-0}
\BS_{k, 0}(\x) :=
\begin{cases}
    w_{\p}(\x), & \x \in [\x_{i-1}, \x_{i}),\\
    0, & \text{otherwise},
\end{cases}
\end{equation}
and
\begin{equation}\label{eq:rec-global-q}
\BS_{k,\q}(\x) :=
    w_{\p-\q}(\x)\int_{\a}^\x \biggl[\dfrac{\BS_{k,\q-1}(\y)}{\bspint_{k,\q-1}} - \dfrac{\BS_{k+1,\q-1}(\y)}{\bspint_{k+1,\q-1}}\biggr] \dd \y, \quad \q > 0,
\end{equation}
where $\bspint_{j, \q-1}$ is defined in \cref{eq:integral}. Again, we assumed that any undefined $\BS_{j, \q-1}$ with $j<\p-\q+2$ or $j>\N$ must be regarded as the zero function, and we used the convention that if $\bspint_{j, \q-1}=0$ then \cref{eq:limit} is taken.
At the right end point $\b$, the spline  $\BS_{k,\q}$ is defined by taking the limit from the left, i.e., $\BS_{k,\q}(b):=\lim_{\x\rightarrow b,\x<b}\BS_{k,\q}(\x)$.

\begin{remark}
The global recurrence \cref{eq:rec-global-0}--\cref{eq:rec-global-q} is exactly the same as the definition known for \Chef B-splines of uniform local dimensions; see \cite[Definition~7]{Lyche:2019}. This was possible thanks to the enrichment of the local weights with zero functions so that we have $\p+1$ functions associated with each local interval $[\x_{i-1}, \x_{i})$, and we can glue them together into the global weights in \cref{eq:weights-global}.
\end{remark}

\begin{remark}
The above recurrence relations are stated for the GTB-splines $\BS_k$ but they do not require  partition of unity. Actually, they can be used to construct locally supported  functions enjoying properties \cref{eq:support1}, \cref{eq:support2}, and \cref{eq:continuity1}--\cref{eq:basis} that sum up to $w_0$; see \cite{Lyche:2019} for \Chef B-splines of uniform local dimensions.
\end{remark}

We note that for $\nelms=1$ the space $\splSpacerp$ reduces to a single ECT-space and the GTB-spline basis always exists, provided that \cref{eq:weights-pou} holds. In analogy with the polynomial case, this basis is called the \highlight{Bernstein basis} corresponding to the considered ECT-space.
We end this section by considering the special spline space $\splSpacerp=\splSpacep$ of discontinuous GT-splines. In this case, under assumptions \cref{eq:weights-pou}, the GTB-spline basis always exists because all weight systems identifying the various ECT-spaces $\ECT{\p_i}{(i)}$ are admissible for $\splSpacep$. This basis is nothing else than the global Bernstein basis.

\begin{definition}[Global Bernstein basis] \label{eq:Bernstein-global}
Let $\{\bs^{(i)}_j, \; j=0,\ldots,\p_i\}$ be the Bernstein basis corresponding to the ECT-space $\ECT{\p_i}{(i)}$, $i=1,\ldots,\nelms$. Let $l := l(i,j) := \sum_{k=1}^{i-1} (\p_k+1) + j$ and define
\begin{equation*}
\bs_{l}(\x) :=
\begin{cases}
\bs^{(i)}_{j}(\x), & \x \in \I_i, \\
0,                & \text{otherwise.}
\end{cases}
\end{equation*}
\end{definition}

We recall from \cite[Example 16]{Lyche:2019} that the local Bernstein functions $\bs^{(i)}_{j} := \bs_{j,\p_{i}}$, $j=0, \ldots, \p_i$ are defined recursively as follows. For $\q=0,\ldots, \p_i$ and $j=0,\ldots,\q$, the function $\bs_{j,\q}$ is defined at $\x \in [\x_{i-1}, \x_{i}]$ as
\begin{equation} \label{eq:rec-bern-0}
\bs_{0,0}(\x) := w_{\p_i}^{(i)}(\x),
\end{equation}
and
\begin{equation}  \label{eq:rec-bern-q}
\bs_{j,\q}(\x) := w_{\p_i-\q}^{(i)}(\x)\begin{dcases}
1-\int_{\x_{i-1}}^\x \dfrac{\bs_{0,\q-1}(\y)}{\bbint_{0,\q-1}}\dint\y, & j=0,
\\
\int_{\x_{i-1}}^\x \biggl[\dfrac{\bs_{j-1,\q-1}(\y)}{\bbint_{j-1,\q-1}} - \dfrac{\bs_{j,\q-1}(\y)}{\bbint_{j,\q-1}}\biggr]\dint\y, &  0< j < q,
\\
\int_{\x_{i-1}}^\x \dfrac{\bs_{\q-1,\q-1}(\y)}{\bbint_{\q-1,\q-1}}\dint\y, & j=\q,
\end{dcases}\quad \q>0,
\end{equation}
where
\begin{equation}\label{eq:bern-integral}
\bbint_{j,\q-1} := \int_{\x_{i-1}}^{\x_i} \bs_{j, \q-1}(\y)\dd\y.
\end{equation}

\begin{remark}
\label{rmk:Hermite-Bernstein}
Instead of using the recurrence relation \cref{eq:rec-bern-0}--\cref{eq:rec-bern-q}, each Bernstein function $\bs^{(i)}_{j}$ can also be computed by solving the following  Hermite interpolation problem in the space $\ECT{\p_i}{(i)}$: for $j=0$,
\begin{equation*}
  \bs^{(i)}_{0}(\x_{i-1})=1, \quad
  D^{l}\bs^{(i)}_{0}(\x_{i})=0, \quad l=0,\ldots,\p_i-1,
\end{equation*}
and for $j=1,\ldots, \p_i$,
\begin{align*}
  D^{l}\bs^{(i)}_{j}(\x_{i-1})&=0, \quad  l=0,\ldots,j-1, \quad
  D^{l}\bs^{(i)}_{j}(\x_{i})=0, \quad l=0,\ldots,\p_i-j-1, \\
  D^{j}\bs^{(i)}_{j}(\x_{i-1})&=-\sum_{l=0}^{j-1}D^{j}\bs^{(i)}_{l}(\x_{i-1}).
\end{align*}
Since $\ECT{\p_i}{(i)}$ is an ECT-space, this interpolation problem has a unique solution; see \cref{def:ET}. Any convenient basis in $\ECT{\p_i}{(i)}$ can be used to represent the  Bernstein functions.
\end{remark}

 The presented integral recurrence relations, in particular the global one in \cref{eq:rec-global-0}--\cref{eq:rec-global-q}, are suited for symbolic computation. However, they might lack stability in numerical computation. In the next section we provide a knot insertion procedure which is an important ingredient to produce an efficient and stable numerical evaluation algorithm for the basis functions we are interested in.

% Knot insertion
\section{Knot insertion} \label{sec:4}
Knot insertion is the fundamental operation of inserting a new knot into an existing knot vector while maintaining the shape of a spline curve. If the new knot is already present in the initial knot vector, then the continuity is reduced at the corresponding breakpoint. Otherwise, a new breakpoint is inserted in the partition.

\begin{lemma} \label{lem:jump}
If $\r_i<\min\{\p_i,\p_{i+1}\}$, then
there are exactly $\r_i+3$ successive GTB-splines that have a jump in their $(\r_i+1)$-th order derivative  at $\x = \x_i$:
\begin{align*}
	& \jump{\x_i, \r_i+1}{\BS_k} = 0, \quad \text{for } k = 1, \ldots, \lintsum(i)-1, \\
	& \jump{\x_i, \r_i+1}{\BS_k}  \neq 0, \quad \text{for } k = \lintsum(i), \ldots , \rintsum(i)+1, \\
	& \jump{\x_i, \r_i+1}{\BS_k}  = 0, \quad \text{for } k = \rintsum(i)+2, \ldots , \N.
\end{align*}
\end{lemma}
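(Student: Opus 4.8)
The plan is to read off $\jump{\x_i,\r_i+1}{\BS_k}$ from the position of $\x_i$ relative to the support $[\lknot_k,\rknot_k]$, using the block structure of the knot vectors \cref{eq:knots1,eq:knots2}. Recall that $\lknot_k=\x_i$ exactly for $k\in\{\rintsum(i)+1,\ldots,\rintsum(i+1)\}$ and $\rknot_k=\x_i$ exactly for $k\in\{\lintsum(i-1)+1,\ldots,\lintsum(i)\}$; a short computation using $\r_0=-1$ gives $\rintsum(i)-\lintsum(i)=\r_i+1$, so the claimed middle band $\{\lintsum(i),\ldots,\rintsum(i)+1\}$ indeed contains $\r_i+3$ indices. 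I would split the indices into four groups: the two outer tails $k\le\lintsum(i)-1$ and $k\ge\rintsum(i)+2$; the two band endpoints $k=\lintsum(i)$ and $k=\rintsum(i)+1$; and the $\r_i+1$ interior band indices $k\in\{\lintsum(i)+1,\ldots,\rintsum(i)\}$, which by \cref{lemma:7} are exactly the GTB-splines that are nonzero on both $[\x_{i-1},\x_i)$ and $[\x_i,\x_{i+1})$.

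For the outer tails the jump vanishes for one of two reasons. If $\x_i\notin[\lknot_k,\rknot_k]$ then $\BS_k\equiv0$ near $\x_i$ by \cref{eq:support2}, so all one-sided derivatives vanish. Otherwise $\x_i$ is a support endpoint with a repeated knot: for $k\le\lintsum(i)-1$ with $\rknot_k=\x_i$ the knot is repeated in $\mbf{\rknot}$, so \cref{eq:local-supersmoothness} gives $\rsmooth(k)\ge\r_i+1$, and since $\BS_k$ vanishes to the right of $\rknot_k$ the exact end-point smoothness \cref{eq:continuity2} forces $D^{\r_i+1}_-\BS_k(\x_i)=0=D^{\r_i+1}_+\BS_k(\x_i)$; the tail $k\ge\rintsum(i)+2$ is symmetric via \cref{eq:continuity1} and $\lsmooth(k)\ge\r_i+1$. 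At the two band endpoints the smoothness is instead exactly $\r_i$: for $k=\lintsum(i)$ we have $\rknot_k=\x_i$ with $\rsmooth(k)=\r_i$, so \cref{eq:continuity2} together with right-vanishing yields $D^{\r_i+1}_-\BS_k(\x_i)\ne0$; for $k=\rintsum(i)+1$ we have $\lknot_k=\x_i$ with $\lsmooth(k)=\r_i$, and \cref{eq:pos-der} gives $D^{\r_i+1}_+\BS_k(\x_i)\ne0$. These three groups are routine.

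The heart of the matter is the interior band, where $\x_i$ is a genuine interior breakpoint of the support: here the properties \cref{eq:support1}--\cref{eq:basis} do not by themselves pin down the smoothness at $\x_i$ (local linear independence only constrains each open piece), and I must show that each such $\BS_k$ is exactly $C^{\r_i}$ there, i.e. not accidentally $C^{\r_i+1}$. I would prove this by descending to the derivative space. From the proof of \cref{thm:B-splines} we have $\BS_k'=\MStilde_k-\MStilde_{k+1}$ with the $\MStilde_k$ the unit-integral B-splines of $\splSpacerpDer$, whose prescribed smoothness at $\x_i$ is $\r_i-1$; hence $\jump{\x_i,\r_i+1}{\BS_k}=\jump{\x_i,\r_i}{\MStilde_k}-\jump{\x_i,\r_i}{\MStilde_{k+1}}$, so the top-order jumps of the GTB-splines are the consecutive differences of the top-order jumps of the lower-level splines. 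This sets up an induction on $\r_i$: the base case $\r_i=0$ is a jump in the first derivative, which by the recurrence \cref{eq:rec-global-0}--\cref{eq:rec-global-q} reduces to the values at $\x_i$ of the strictly positive bottom-level weights $w_{\p}$, and is nonzero.

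The main obstacle is exactly this inductive step for the interior indices: knowing only that each $\jump{\x_i,\r_i}{\MStilde_m}$ is nonzero on the corresponding lower band is insufficient, since a difference of two nonzero numbers can still vanish. What is needed is a correctly signed strengthening of the inductive hypothesis, forcing consecutive top-order jumps to be distinct. This sign information is what the construction supplies: unwinding \cref{eq:rec-global-q} (equivalently \cref{eq:rec-local}) expresses $\jump{\x_i,\r_i}{\MStilde_m}$ through the admissibly matched, strictly positive weights $w_j^{(i)}(\x_i)=w_j^{(i+1)}(\x_i)$ and the strictly positive normalizing integrals $\bspint_{j,\q}$ of \cref{eq:integral}, which produces a strict monotone (hence injective) pattern in $m$ and rules out cancellation in $\jump{\x_i,\r_i}{\MStilde_k}-\jump{\x_i,\r_i}{\MStilde_{k+1}}$. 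I would therefore carry this signed statement through the induction; establishing the sign pattern from the positivity of the weights and integrals is the one genuinely technical point. I note that the tempting alternative of passing to the $C^{\r_i+1}$ subspace and invoking a zero-count is not available, because raising the smoothness at $\x_i$ may destroy admissibility of the weights (\cref{weight:assumption}) and hence the very zero-counting bound on which \cref{thm:M-splines} relies.
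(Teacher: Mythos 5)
Your bookkeeping and your treatment of the easy cases are correct and agree with the paper: local support \cref{eq:support2} kills the jump for indices whose support does not reach $\x_i$, the repeated-knot inequalities $\rsmooth(k)\geq\r_i+1$ (resp.\ $\lsmooth(k)\geq\r_i+1$) together with \cref{eq:continuity2} (resp.\ \cref{eq:continuity1}) kill it on the tails that still touch $\x_i$, and exactness of the endpoint smoothness plus one-sided vanishing gives the nonvanishing at the two band endpoints $k=\lintsum(i)$ and $k=\rintsum(i)+1$. The genuine gap is exactly where you locate it: the interior band $k=\lintsum(i)+1,\ldots,\rintsum(i)$, where $\x_i$ lies in the open interior of the support. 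Your reduction $\jump{\x_i,\r_i+1}{\BS_k}=\jump{\x_i,\r_i}{\MStilde_k}-\jump{\x_i,\r_i}{\MStilde_{k+1}}$ is fine, but the induction then needs that consecutive top-order jumps of the lower-level splines never coincide, and the ``strict monotone (hence injective) pattern in $m$'' you invoke is asserted, not proved. Positivity of the weights and of the integrals $\bspint_{j,\q-1}$ in \cref{eq:integral} controls the sign of function values, not the sign of a jump of a high-order derivative of a spline straddling $\x_i$; extracting the required sign/alternation property from \cref{eq:rec-global-0}--\cref{eq:rec-global-q} is essentially of the same depth as the lemma itself. So as written the proposal proves three of the four groups of indices and leaves the heart of the statement open.

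The paper closes precisely this point by citation rather than by recurrence: for the band indices it invokes \cite[Theorem~4.2]{Buchwald:2003} and \cite[Theorem~4.3]{Nurnberger:1984}, which assert that these functions have minimal support, and minimal support excludes extra smoothness at an interior breakpoint of the support. Note that these are statements about the original space $\splSpacerp$ equipped with its own admissible weights, so the obstruction you raise against the zero-counting route --- that the given weights need not be admissible for the smoothed space $\splSpacerpB$ --- only rules out the naive version of that route (re-running \cref{thm:M-splines} in the smoothed space); it does not touch the argument the paper actually uses. If you want a self-contained proof, the workable path is to reproduce the Hermite-interpolation/zero-counting arguments behind those minimal-support theorems, in the spirit of the proof of \cref{thm:M-splines}, rather than to push the integral recurrence; alternatively, if you could genuinely establish the sign pattern you conjecture, that would constitute a new proof, but at present it is a plan with its key step missing.
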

\begin{proof} The local support \cref{eq:support2} implies that all GTB-splines vanish in a neighborhood of $\x_i$ except for $\BS_{k}$, $k=\lintsum(i-1)+1, \ldots,  \rintsum(i+1)$. From \cref{eq:continuity2} it follows that the GTB-splines $\BS_{\lintsum(i)-\lmult}(\x)$, with $\lmult>0$,  are at least $C^{\r_i+1}$-smooth at $\x = \x_i$. Similarly, from \cref{eq:continuity1} it follows that the GTB-splines $\BS_{\rintsum(i)+1+\rmult}(\x)$, with $\rmult>0$, are at least $C^{\r_i+1}$-smooth at $\x = \x_i$. The remaining GTB-splines $\BS_k(\x)$, $k = \lintsum(i), \ldots , \rintsum(i)+1$ are $C^{\r_i}$-smooth at $\x = \x_i$.
From \cite[Theorem 4.2]{Buchwald:2003} and \cite[Theorem 4.3]{Nurnberger:1984} it follows that these functions have minimal support, thus they cannot be smoother at $\x = \x_i$.
Since $\rintsum(i)+1 - (\lintsum(i)-1) = \r_i+2 - \r_0 = \r_i+3$ the result follows.
\end{proof}

Suppose we remove a knot $\lknot = \rknot = \x_i \in (a, b)$ from $\mbf{\lknot}{}$ and $\mbf{\rknot}{}$, respectively, resulting in a new spline space $\splSpacerpB$ with corresponding knot vectors,
\begin{align*}
	\tilde{\mbf{\lknot}{}} := (\tilde{\lknot}_k )_{k=1}^{\N-1}:= ( \;
		\underbrace{\x_0,\;  \ldots,\;  \x_0}_{\p_{1} - \r_0 \text{ times}},\; \ldots,\;
		&\underbrace{\x_{i-1},\; \ldots,\;  \x_{i-1}}_{\p_{i} - \r_{i-1} \text{ times}},\;
		\underbrace{\x_{i},\; \ldots,\; \x_{i}}_{\p_{i+1} - \r_i -1 \text{ times}},\;
		\\
		&\underbrace{\x_{i+1},\;  \ldots,\; \x_{i+1}}_{\p_{i+2} - \r_{i+1} \text{ times}},\; \ldots,\;
		\underbrace{\x_{\nelms-1},\; \ldots,\; \x_{\nelms-1}}_{\p_{\nelms} - \r_{\nelms-1} \text{ times}}
	\; ),
\end{align*}
and
\begin{align*}
	\tilde{\mbf{\rknot}{}} := (\tilde{\rknot}_k )_{k=1}^{\N-1}:= (\;
		\underbrace{\x_1,\; \ldots,\; \x_1}_{\p_1 - \r_1 \text{ times}},\; \ldots,\;
		&\underbrace{\x_{i-1},\; \ldots,\; \x_{i-1}}_{\p_{i-1} - \r_{i-1} \text{ times}},\;
		\underbrace{\x_{i},\; \ldots,\; \x_{i}}_{\p_i - \r_i-1 \text{ times}},\;\\
		&\underbrace{\x_{i+1},\; \ldots,\; \x_{i+1}}_{\p_{i+1} - \r_{i+1} \text{ times}},\; \ldots,\;
		\underbrace{\x_\nelms,\; \ldots,\; \x_\nelms}_{\p_{\nelms} - \r_{\nelms} \text{ times}}
	\; ).
\end{align*}
The smoothness vector is easily deduced to be $\tilde{\bsmooth} = (\smooth_0, \ldots, \smooth_{i-1}, \smooth_{i}+1, \smooth_{i+1}, \ldots, \smooth_{\nelms})$; it is assumed to satisfy the same restrictions as in \cref{eq:smoothness}, and so $\smooth_i+1\leq\min\{\p_i,\p_{i+1}\}$. Consequently, the spline space $\splSpacerpB$ is a subspace of $\splSpacerpA$ with one additional continuous derivative at $\x = \x_i$.

From \cref{eq:support-characterisation} we recall that each of the GTB-splines $\tilde{\BS}_k$, $k=1, \ldots \N-1$, that form a basis for $\splSpacerpB$, are uniquely defined, up to a constant multiple, by a triple $\tripleB{k}{k}$.  It can directly be verified that the following relation is consistent with the definition of the knot vectors $\tilde{\mbf{\lknot}{}}$ and $\tilde{\mbf{\rknot}{}}$.

\begin{lemma} \label{lem:tripple2}
The following relationship holds
\begin{equation*}
	\tripleB{k}{k} =
	\begin{cases}
		\tripleA{k}{k}, & \text{if } 1 \leq k < \lintsum(i),\\
		\tripleA{k}{k+1}, & \text{if } \lintsum(i) \leq k \leq \rintsum(i),	\\
		\tripleA{k+1}{k+1}, \hspace*{-0.05cm}
		& \text{if } \rintsum(i) < k < \N.
	\end{cases}
\end{equation*}
\end{lemma}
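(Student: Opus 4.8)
The plan is to invoke \cref{eq:support-characterisation}: each of the GTB-splines of $\splSpacerpA$ and of $\splSpacerpB$ is determined, up to a scalar multiple, by its triple, so establishing the stated identities reduces to matching the three ingredients of $\tripleB{k}{k}$ (the support $\suppB{k}{k}$ together with the two exact-smoothness indices $\lsmoothtilde(k)$ and $\rsmoothtilde(k)$) against those of the asserted $\tripleA{\cdot}{\cdot}$ in each of the three index ranges. First I would record, entrywise, how the shortened vectors $\tilde{\mbf{\lknot}}{}$ and $\tilde{\mbf{\rknot}}{}$ relate to $\mbf{\lknot}{}$ and $\mbf{\rknot}{}$, then read off the supports, and finally handle the smoothness indices via \cref{eq:local-supersmoothness}.

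For the index shifts, removing one copy of $\x_i$ deletes a single entry from the $\x_i$-block of $\mbf{\lknot}{}$ — which by \cref{eq:knots1} occupies positions $\rintsum(i)+1,\ldots,\rintsum(i+1)$ — and one entry from the $\x_i$-block of $\mbf{\rknot}{}$ — positions $\lintsum(i-1)+1,\ldots,\lintsum(i)$ by \cref{eq:knots2}. Inspecting the blocks then gives $\tilde{\lknot}_k=\lknot_k$ for $k\le\rintsum(i+1)-1$ and $\tilde{\lknot}_k=\lknot_{k+1}$ for $k\ge\rintsum(i+1)$, and $\tilde{\rknot}_k=\rknot_k$ for $k\le\lintsum(i)-1$ and $\tilde{\rknot}_k=\rknot_{k+1}$ for $k\ge\lintsum(i)$; inside the $\x_i$-blocks the two indexings produce the same knot value, so the support identities hold as interval equalities even where the raw thresholds disagree. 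Combining these with the elementary identities $\rintsum(i)=\lintsum(i)+\r_i+1$ and $\rintsum(i+1)=\rintsum(i)+\p_{i+1}-\r_i$ (whence $\lintsum(i)\le\rintsum(i)<\rintsum(i+1)$), I would intersect the thresholds with the three ranges $k<\lintsum(i)$, $\lintsum(i)\le k\le\rintsum(i)$, and $\rintsum(i)<k$ to obtain $\suppB{k}{k}=\suppA{k}{k}$, $\suppA{k}{k+1}$, and $\suppA{k+1}{k+1}$, respectively.

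It then remains to match the smoothness indices. Whenever the start point $\tilde{\lknot}_k$ (resp.\ end point $\tilde{\rknot}_k$) is a breakpoint other than $\x_i$, the relevant knot block is untouched by the removal and $\tilde{\bsmooth}$ agrees with $\bsmooth$ there, so by \cref{eq:local-supersmoothness} the index simply transports under the shift (for instance $\rsmoothtilde(k)=\rsmooth(k+1)$ throughout the third range, since there $\tilde{\rknot}_k=\rknot_{k+1}$ lies at a breakpoint beyond $\x_i$). The only delicate subcase is a function whose start or end point equals $\x_i$: there the deletion of one $\x_i$-knot lowers the one-sided multiplicity in \cref{eq:local-supersmoothness} by exactly one, while the increment $\tilde{\r}_i=\r_i+1$ raises the leading term by one, and the two effects cancel, preserving the exact-smoothness value. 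For example, for $\rknot_k=\x_i$ in the first range one finds $\rsmoothtilde(k)=(\r_i+1)+(\lintsum(i)-1-k)=\r_i+(\lintsum(i)-k)=\rsmooth(k)$, and analogously for $\lsmoothtilde$ against $\lsmooth(k+1)$ in the third range. Collecting the support and smoothness ingredients in each range yields the three asserted triple identities.

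I expect this cancellation at $\x_i$ to be the sole subtle point: it is exactly where the reindexing and the smoothness increment $\r_i\mapsto\r_i+1$ interact, and it must be verified separately on the left and right sides. Everything else is a direct, if bookkeeping-heavy, reading of the block structure of \cref{eq:knots1,eq:knots2}.
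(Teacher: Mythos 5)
Your proof is correct and is essentially the paper's own argument: the paper dispatches \cref{lem:tripple2} as a direct verification against the definitions of $\tilde{\mbf{\lknot}}$ and $\tilde{\mbf{\rknot}}$, and your block-by-block bookkeeping --- the entrywise shifts at thresholds $\rintsum(i+1)$ and $\lintsum(i)$, the observation that inside the $\x_i$-blocks the two indexings give equal knot values, and the cancellation in \cref{eq:local-supersmoothness} between the lost $\x_i$-multiplicity and the increment $\r_i \mapsto \r_i+1$ --- is exactly that verification written out in full. Your auxiliary identities $\rintsum(i)=\lintsum(i)+\r_i+1$ and $\rintsum(i+1)=\rintsum(i)+\p_{i+1}-\r_i$ check out, so the three ranges match the asserted triples as claimed.
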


An alternative, yet, equivalent perspective is that $\mbf{\lknot}$ and $\mbf{\rknot}$ are obtained from $\tilde{\mbf{\lknot}{}}$ and $\tilde{\mbf{\rknot}{}}$ by the process of knot insertion.

\begin{proposition}\label{prop:knot_insertion} Let $\mbf{\lknot}{} $ and $\mbf{\rknot}{}$ be obtained from $\tilde{\mbf{\lknot}{}} $ and $\tilde{\mbf{\rknot}{}} $ by inserting a single knot $\lknot = \rknot = \x_i \in (a, b)$, respectively. Then,
\begin{equation}\label{eq:knotremoval}
\tilde{\BS}_k(\x) = \ca_k \BS_k(\x) + \cb_{k+1} \BS_{k+1}(\x),
\end{equation}
where
\begin{itemize}
\item[(i)] $\ca_{k} = 1$ and $\cb_{k+1}=0$ if $1\leq k < \lintsum(i)$;
\item[(ii)] $\ca_{k} > 0$ and $\cb_{k+1} = - \ca_{k} \dfrac{\jump{\x_i, \r_i+1}{\BS_{k}}}{\jump{\x_i, \r_i+1}{\BS_{k+1}}} > 0$ if $\lintsum(i) \leq k \leq \rintsum(i)$;
\item[(iii)] $\ca_{k} = 0$ and $\cb_{k+1}=1$ if $\rintsum(i) < k < \N$.
\end{itemize}
\end{proposition}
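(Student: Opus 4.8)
The plan is to exploit the inclusion $\splSpacerpB\subset\splSpacerpA$, which holds because $\splSpacerpB$ differs from $\splSpacerpA$ only by the requirement of one extra continuous derivative at $\x_i$. In particular every $\tilde{\BS}_k$ lies in $\splSpacerpA$ and so expands in the GTB-spline basis $\{\BS_l\}_{l=1}^{\N}$. The first task is to localize this expansion to the two consecutive terms $\BS_k,\BS_{k+1}$. By \cref{lem:tripple2} the support of $\tilde{\BS}_k$ is $[\lknot_k,\rknot_k]$, $[\lknot_k,\rknot_{k+1}]$, or $[\lknot_{k+1},\rknot_{k+1}]$ in the three regimes, hence contained in $[\lknot_k,\rknot_{k+1}]$. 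Using the nested support structure of \cref{lemma:6} together with the local linear independence \cref{eq:basis} (reading off coefficients element by element from the extremal subintervals of the support, a peeling argument), one shows that all coefficients except those of $\BS_k$ and $\BS_{k+1}$ vanish, which is precisely the two-term relation \cref{eq:knotremoval}.

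For the boundary regimes (i) and (iii), \cref{lem:tripple2} shows that $\tilde{\BS}_k$ shares its \emph{entire} triple with a single finer B-spline, namely $\BS_k$ when $1\leq k<\lintsum(i)$ and $\BS_{k+1}$ when $\rintsum(i)<k<\N$. The uniqueness up to scaling recorded in \cref{eq:support-characterisation} (both functions lie in $\splSpacerpA$ and span the same one-dimensional space cut out by their common triple) then forces the other coefficient to vanish, so $\tilde{\BS}_k=\ca_k\BS_k$ respectively $\tilde{\BS}_k=\cb_{k+1}\BS_{k+1}$. To pin the surviving constant to $1$ I would invoke the two partitions of unity \cref{eq:pou}: summing \cref{eq:knotremoval} over $k$ and comparing $\sum_k\tilde{\BS}_k=\sum_l\BS_l=1$ gives, by linear independence of the $\BS_l$, the relations $\ca_1=1$, $\cb_{\N}=1$, and $\ca_l+\cb_l=1$ for interior $l$. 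Feeding in the vanishing coefficients ($\cb_{k+1}=0$ throughout regime (i), $\ca_k=0$ throughout regime (iii)) and inducting from the two ends yields $\ca_k=1$ in (i) and $\cb_{k+1}=1$ in (iii).

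In the interior regime (ii) both coefficients survive, and two points remain. For the ratio, $\tilde{\BS}_k\in\splSpacerpB$ is $C^{\r_i+1}$ at $\x_i$, so $\jump{\x_i,\r_i+1}{\tilde{\BS}_k}=0$; applying the jump operator to \cref{eq:knotremoval} and invoking \cref{lem:jump}, which guarantees $\jump{\x_i,\r_i+1}{\BS_k}\neq0$ and $\jump{\x_i,\r_i+1}{\BS_{k+1}}\neq0$ for these indices, yields $\cb_{k+1}=-\ca_k\,\jump{\x_i,\r_i+1}{\BS_k}/\jump{\x_i,\r_i+1}{\BS_{k+1}}$. For the signs I would compare leading one-sided derivatives at the two ends of the support. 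At $\lknot_k$ the spline $\BS_{k+1}$ is smoother than $\BS_k$ (it vanishes there if $\lknot_{k+1}>\lknot_k$, and $\lsmooth(k+1)=\lsmooth(k)+1$ if $\lknot_{k+1}=\lknot_k$, by \cref{eq:local-supersmoothness}), so differentiating \cref{eq:knotremoval} $\lsmooth(k)+1$ times from the right isolates $\ca_k\,D^{\lsmooth(k)+1}_+\BS_k(\lknot_k)=D^{\lsmooth(k)+1}_+\tilde{\BS}_k(\lknot_k)$; both leading derivatives are positive by \cref{eq:pos-der} (applied in $\splSpacerpA$ and $\splSpacerpB$), whence $\ca_k>0$. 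The mirror-image computation at $\rknot_{k+1}$, where $\BS_k$ is now the smoother function and the common sign $(-1)^{\rsmooth(k+1)+1}$ of the leading left derivative of a non-negative spline is used, gives $\cb_{k+1}>0$. The main obstacle is the localization step of the first paragraph, where repeated knots make the element-by-element peeling bookkeeping delicate; the secondary delicate point is tracking the exact-smoothness labels $\lsmooth,\rsmooth$ consistently across the two spaces in the sign analysis.
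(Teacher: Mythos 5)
Your proposal is correct and takes essentially the same route as the paper's proof: expansion of $\tilde{\BS}_k$ in the finer basis via the inclusion $\splSpacerpB\subset\splSpacerpA$, localization to the two terms $\BS_k,\BS_{k+1}$ via \cref{lem:tripple2}, the jump identity together with \cref{lem:jump} for the ratio in case (ii), and comparison of leading one-sided derivatives with positivity for the signs. The only difference is one of explicitness: you spell out the support-peeling bookkeeping and use the two partitions of unity to pin the constants in cases (i) and (iii), details the paper leaves implicit in ``follow directly'' and defers in part to \cref{thm:knotinsertion}.
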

\begin{proof}
Because $\splSpacerpB \subset \splSpacerpA$, every $\tilde{\BS}_k \in \splSpacerpB$ can be uniquely written as a linear combination of the GTB-splines that form a basis for $\splSpacerpA$. The particular functions involved in the linear combination in \cref{eq:knotremoval} follow from \cref{lem:tripple2}. Case (i) and (iii) follow directly. Case (ii) follows from \cref{lem:jump} and
\begin{equation} \label{eq:constraint-jump}
	0= \jump{\x_i, \r_i+1}{\tilde{\BS}_k} = \ca_k \jump{\x_i, \r_i+1}{\BS_k} + \cb_{k+1} \jump{\x_i, \r_i+1}{ \BS_{k+1}}.
\end{equation}

Finally, the start-point smoothness \cref{eq:continuity1} implies that
\begin{equation*}
	D^{j}_{+} \tilde{\BS}_k(\lknot_k) = D^{j}_{+} \BS_k(\lknot_k)=0, \quad j=0,\ldots,\lsmooth(k),
\end{equation*}
and
\begin{equation}\label{eq:max-jump}
	D^{\lsmooth(k)+1}_{+} \tilde{\BS}_k(\lknot_k) = \ca_{k} D^{\lsmooth(k)+1}_{+} \BS_k(\lknot_k) \neq 0.
\end{equation}
Then, the positivity \cref{eq:support1} implies that the derivatives on both sides of \cref{eq:max-jump} have the same sign. Consequently, $\ca_{k}$ must be positive. A similar argument, involving the end-point smoothness in \cref{eq:continuity2}, shows that $\cb_{k+1}$ is positive.
\end{proof}

We can also write \cref{eq:knotremoval} in matrix notation,
\begin{equation}
\label{eq:knotremovalmatrix}
	 \tilde{\BS}_k(\x) = \sum_{l=1}^{\N} \C_{kl} \BS_l(\x) , \quad k=1,\ldots, \N-1 \quad \Longleftrightarrow \quad
	\vect{\tilde{\BS}}{}(\x) = \C \; \vect{\BS}{}(\x).
\end{equation}
Here, $\C \in \RR^{(\N-1) \times \N}$ and the entries $\C_{kl}$ are determined by the three cases (i)--(iii). The matrix $\C$ has the following sparsity structure:
\begin{equation}\label{eq:extract1}
\C =
\begin{bmatrix}
\mat{I}_A & 			&\\
        & \hat{\mat{C}} & \\
        & & \mat{I}_B
\end{bmatrix}, \quad
\hat{\CC} =
\begin{bmatrix}
\ca_{\lintsum(i)} 	& \cb_{\lintsum(i)+1} 		& 				&  					&				\\
                & \ca_{\lintsum(i)+1}  	& \ddots			& 					&				\\
                &   					&  \ddots 			&  	\cb_{\rintsum(i)}		&				\\
                &  					&  				&   	\ca_{\rintsum(i)} 	& \cb_{\rintsum(i)+1}		
\end{bmatrix}.
\end{equation}
Here, $\mat{I}_A$ and $\mat{I}_B$ are identity matrices of size $(\lintsum(i)-1)$ and $(\N-1-\rintsum(i))$, respectively, and $\hat{\CC} \in \RR^{(\r_i+2) \times (\r_i+3)}$.

\begin{theorem}[Knot insertion] \label{thm:knotinsertion} Let $\mbf{\lknot}{} $ and $\mbf{\rknot}{}$ be obtained from $\tilde{\mbf{\lknot}{}} $ and $\tilde{\mbf{\rknot}{}} $ by inserting a single knot $\lknot = \rknot = \x_i \in (a, b)$, respectively. Then,
\begin{equation}\label{eq:knotremoval1}
	s(\x) = \sum_{k=1}^{\N-1} \tilde{\dof}_k \tilde{\BS}_k(\x) = \sum_{k=1}^{\N} \dof_k \BS_k(\x),
\end{equation}
where
\begin{equation}\label{eq:knotremoval2}
	\dof_k =
	\begin{cases}
		\newdof_k, &  1 \leq k \leq \lintsum(i),\\
		\cb_{k} \newdof_{k-1} + \ca_{k} \newdof_{k}, &  \lintsum(i) < k \leq \rintsum(i), \\
		\newdof_{k-1}, &  \rintsum(i) < k \leq \N, \\
	\end{cases}
\end{equation}
with $\ca_k + \cb_{k} = 1$.
\end{theorem}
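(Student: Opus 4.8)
The plan is to obtain the coefficient rule \cref{eq:knotremoval2} as the transpose of the basis change \cref{eq:knotremovalmatrix}, and then to read off the individual entries from \cref{prop:knot_insertion}. First, since $\splSpacerpB\subset\splSpacerpA$, the spline $s=\sum_{k=1}^{\N-1}\newdof_k\tilde{\BS}_k$ lies in $\splSpacerpA$ and admits a unique representation $s=\sum_{l=1}^{\N}\dof_l\BS_l$. Inserting the matrix relation \cref{eq:knotremovalmatrix} into $s=\sum_k\newdof_k\tilde{\BS}_k$ and using linear independence of $\{\BS_l\}$ shows that the coefficients transform by the transpose of $\C$, that is, $\dof_l=\sum_{k=1}^{\N-1}\C_{kl}\,\newdof_k$. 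By \cref{eq:knotremoval} the only nonzero entries in column $l$ of $\C$ are $\C_{ll}=\ca_l$ and $\C_{l-1,l}=\cb_l$, so this collapses to the single formula
\begin{equation*}
\dof_l=\ca_l\,\newdof_l+\cb_l\,\newdof_{l-1},
\end{equation*}
with the convention that $\newdof_0$ and $\newdof_{\N}$ are absent (equivalently $\cb_1=\ca_{\N}=0$). This already has the shape of \cref{eq:knotremoval2}.

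To establish the normalization $\ca_k+\cb_k=1$, I would invoke the partition of unity. Because each $\ECT{\p_i}{(i)}$ contains the constants, the function $1$ lies in $\splSpacerpA$; its representation in the basis $\{\BS_l\}$ is unique, and by \cref{eq:pou} it equals $\sum_l\BS_l$. Summing \cref{eq:knotremovalmatrix} over $k$ and again using \cref{eq:pou} for the $\tilde{\BS}_k$ forces every column of $\C$ to sum to one, i.e.\ $\ca_k+\cb_k=1$ (with $\ca_1=\cb_{\N}=1$ at the two ends).

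Finally I would substitute the three cases of \cref{prop:knot_insertion}. For $1\le k\le\lintsum(i)$ one has $\cb_k=0$, hence $\ca_k=1$, giving $\dof_k=\newdof_k$; for $\lintsum(i)<k\le\rintsum(i)$ both $\ca_k,\cb_k$ are positive and the formula reads $\dof_k=\cb_k\newdof_{k-1}+\ca_k\newdof_k$; and for $\rintsum(i)<k\le\N$ one has $\ca_k=0$, hence $\cb_k=1$, giving $\dof_k=\newdof_{k-1}$. These are exactly the three branches of \cref{eq:knotremoval2}.

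The computation is entirely routine; the only point requiring care is the bookkeeping at the endpoints of the index ranges, namely $k=\lintsum(i)$ and $k=\rintsum(i)+1$. There \cref{prop:knot_insertion} supplies $\cb_{\lintsum(i)}=0$ and $\ca_{\rintsum(i)+1}=0$, and combined with $\ca_k+\cb_k=1$ these are precisely what make the adjacent branches match up (forcing $\ca_{\lintsum(i)}=1$ and $\cb_{\rintsum(i)+1}=1$). This is where I expect the only friction in an otherwise mechanical index-shifting argument.
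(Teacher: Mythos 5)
Your proposal is correct and follows essentially the same route as the paper's proof: both derive the coefficient rule from the transpose relation $\vect{\dof}{}^T = \vect{\newdof}{}^T \C$, read off the entries from the sparsity structure of $\C$ in \cref{eq:extract1} (equivalently, the three cases of \cref{prop:knot_insertion}), and obtain $\ca_k + \cb_k = 1$ together with $\ca_{\lintsum(i)} = \cb_{\rintsum(i)+1} = 1$ from the partition of unity of both bases forcing unit column sums. Your treatment of the endpoint indices $k=\lintsum(i)$ and $k=\rintsum(i)+1$ is exactly the bookkeeping the paper records in its final sentence.
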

\begin{proof}
We write \cref{eq:knotremoval1} as,
\begin{equation*}
	\vect{\newdof}{}^T \vect{\tilde{\BS}}{}(\x)  =
	\vect{\newdof}{}^T \bigl(\C \; \vect{\BS}{}(\x) \bigr) =
	\bigl(\vect{\newdof}{}^T \C \bigr) \vect{\BS}{}(\x)  =
	 \vect{\dof}{}^T \vect{\BS}{}(\x).
\end{equation*}
It follows that
\begin{equation*}
	\vect{\dof}{}^T =   \vect{\newdof}{}^T \C \quad
	 \Longleftrightarrow \quad
	 \dof_l = \sum_{k=1}^{\N-1}   \newdof_k \C_{kl}, \quad l=1,\ldots, \N.
\end{equation*}
The structure of $\C$, given in \cref{eq:extract1}, leads to the result in \cref{eq:knotremoval2}. Finally, the partition of unity \cref{eq:pou} of both bases $\{\BS_k, \; k=1,\ldots, \N\}$ and $\{\tilde{\BS}_k, \; k=1,\ldots, \N-1\}$ implies that the column sum of matrix $\C$ is one. Hence, $\ca_{\lintsum(i)}=\cb_{\rintsum(i)+1}=1$ and $\ca_k + \cb_{k} =1$ for $k = \lintsum(i)+1, \ldots , \rintsum(i)$.
\end{proof}

Besides its intrinsic interest, the knot insertion procedure can be applied  recursively in order to compute a B\'ezier extraction operator which allows for efficient and stable evaluation of GTB-splines. This will be shown in the next section.

% Algorithmic evaluation
\section{Algorithmic evaluation} \label{sec:5}
In this section we present an algorithm that computes the GTB-spline basis $\{\BS_k, \; k=1,\ldots, \N\}$, whenever it exists, for the spline space $\splSpacerp$ using \Bezier extraction, i.e., representing each basis element in the form
\begin{equation} \label{eq:bezier-extr}
	\BS_k(\x) = \sum_{l=1}^{\M} \CC_{kl} \bs_l(\x), \quad k=1,\ldots, \N \quad
	\Longleftrightarrow \quad
    \vect{\BS}{}(\x) =  \CC \; \vect{\bs}{}(\x).
\end{equation}
Here, $\{\bs_l, \; l=1,\ldots, \M := \sum_{i=1}^\nelms (\p_i + 1)\}$ denotes the global Bernstein basis for $\splSpacep$, see \cref{eq:Bernstein-global}, and $\CC \in \RR^{\N \times \M}$ is the extraction operator that maps functions from $\splSpacep$ to $\splSpacerp$.

By construction, $\{\bs_{l}, \; l=1, \ldots, \M\}$ forms a global, locally supported basis for the space $\splSpacep$ that has the properties listed in \cref{thm:B-splines}. Since $\splSpacerp$ is a subspace of $\splSpacep$, we can use the knot insertion procedure to convert from the global Bernstein basis $\{\bs_{l}, \; l=1, \ldots, \M\}$ to the smooth GTB-spline basis $\{\BS_k, \; k=1,\ldots, \N\}$.

As already observed in \cite{Toshniwal:2018b} for the polynomial setting, the knot insertion procedure in \cref{eq:knotremoval} can be regarded as a nullspace computation of the smoothness constraints \cref{eq:constraint-jump} for all $k$ at the breakpoint $\x_i$.
Let $\vect{a}{} \in \RR^{\N}$ denote the vector with entries
\begin{equation} \label{eq:a}
\vect{a}{} :=
\begin{bmatrix}
	0 & \cdots & 0 & \jump{\x_i, \r_i+1}{\BS_{\lintsum(i)}}  & \cdots & \jump{\x_i, \r_i+1}{\BS_{\rintsum(i)+1}} & 0 & \cdots & 0
\end{bmatrix}^T.
\end{equation}
Then, the knot insertion matrix $\C \in \RR^{(\N-1) \times \N}$ in \cref{eq:knotremovalmatrix,eq:extract1} satisfies $ \C \vect{a}{}= \vect{0}{}$. This matrix can be computed by \cref{alg:1} using the vector $\vect{a}{}$ as input.

\begin{proposition}
  Given the vector $\vect{a}{}$ in \cref{eq:a} as input, \cref{alg:1} computes the matrix $\C$ in \cref{eq:extract1}.
\end{proposition}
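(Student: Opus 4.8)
The plan is to show that the output of \cref{alg:1} agrees, entry by entry, with the uniquely determined matrix $\C$ of \cref{eq:extract1}. By \cref{prop:knot_insertion} and \cref{thm:knotinsertion} this target matrix is pinned down by three facts, which I would take as the specification the algorithm must reproduce: outside the index band $k=\lintsum(i),\ldots,\rintsum(i)+1$ the matrix is the identity (the blocks $\mat{I}_A$ and $\mat{I}_B$); its single nontrivial block $\hat{\CC}$ is bidiagonal, with entries $\ca_k$ on the diagonal and $\cb_{k+1}$ on the superdiagonal, satisfying the nullspace relation $\C\vect{a}{}=\vect{0}{}$, i.e.\ row-wise $\ca_k a_k + \cb_{k+1} a_{k+1}=0$ with $a_k:=\jump{\x_i,\r_i+1}{\BS_k}$; and the normalization $\ca_{\lintsum(i)}=1$, $\ca_k+\cb_k=1$, which forces $\cb_{\rintsum(i)+1}=1$.

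First I would verify that the algorithm locates the band correctly. By \cref{lem:jump} the components of $\vect{a}{}$ are nonzero exactly for $k=\lintsum(i),\ldots,\rintsum(i)+1$ and vanish otherwise, so the positions of the first and last nonzero entries of the input reveal $\lintsum(i)$ and $\rintsum(i)+1$; the algorithm then fills the complementary index ranges with the identity blocks. Next I would check the band entries. Initializing $\ca_{\lintsum(i)}=1$, the algorithm sweeps forward: at index $k$ it uses the nullspace equation to set $\cb_{k+1}=-\ca_k\,a_k/a_{k+1}$ and then the normalization to set $\ca_{k+1}=1-\cb_{k+1}$. This alternation is precisely the content of \cref{prop:knot_insertion}(ii) combined with $\ca_k+\cb_k=1$, so each computed pair $(\ca_k,\cb_{k+1})$ coincides with the corresponding entry of $\hat{\CC}$.

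Correctness then reduces to two points. The recursion must be well defined, i.e.\ no division by zero occurs; this holds because every denominator $a_{k+1}$ with $k+1\leq\rintsum(i)+1$ is nonzero, which is once more \cref{lem:jump}. This is the step I expect to carry the real content, since it is what allows the band to be resolved one column at a time. The remaining point is the terminal consistency $\cb_{\rintsum(i)+1}=1$. Rather than confirm this by direct computation, I would invoke uniqueness: \cref{prop:knot_insertion} and \cref{thm:knotinsertion} already guarantee that a matrix satisfying the specification exists and is unique, and the forward sweep reconstructs exactly the coefficients those results furnish; hence the terminal value is automatically $1$ and the algorithm's output equals $\C$.
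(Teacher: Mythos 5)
Your proof is correct and takes essentially the same route as the paper's: a forward sweep through the nontrivial band, alternating the nullspace relation $\ca_k a_k + \cb_{k+1}a_{k+1}=0$ with the column-sum-one normalization $\ca_{k+1}=1-\cb_{k+1}$ inherited from partition of unity, starting from $\ca_{\lintsum(i)}=1$. You are merely more explicit than the paper on the points it leaves implicit --- locating the band and ruling out division by zero via \cref{lem:jump}, and settling the terminal value $\cb_{\rintsum(i)+1}=1$ by uniqueness rather than direct verification --- but the underlying argument is the same.
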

\begin{proof}
The partition of unity \cref{eq:pou} implies that the column sum of $\C$ is equal to one. In combination with \cref{eq:constraint-jump}, it can be observed that the non-trivial entries of $\C$ can be computed in succession as follows:
\begin{align*}
	\ca_{\lintsum(i)}  &= 1 \\
	& \, \downarrow \\
	\cb_{\lintsum(i)+1} &= - \ca_{\lintsum(i)} \cdot \vect{a}{\lintsum(i)} / \vect{a}{\lintsum(i)+1} \\
	& \, \downarrow \\
	\ca_{\lintsum(i)+1} &= 1 - \cb_{\lintsum(i)+1} \\
	& \, \downarrow \\
	\cb_{\lintsum(i)+2} &= - \ca_{\lintsum(i)+1} \cdot \vect{a}{\lintsum(i)+1} / \vect{a}{\lintsum(i)+2} \\
	& \; \; \vdots \\
	\cb_{\rintsum(i)+1} &=1.
\end{align*}
It can be directly verified that $ \C \vect{a}{}=\vect{0}{}$. This logic is encoded in \cref{alg:1}.
\end{proof}

\begin{algorithm}[t!]
  \caption{Nullspace computation of a smoothness constraint based on knot insertion
  \label{alg:1}}{
  \begin{algorithmic}[1]
    \Statex
    \Function{nullspace}{$\vect{a}{} \in \RR^\N$}
	\Let{$\C$}{zero matrix (size: $(\N-1)\times\N$)}
	\Let{$k$}{$1$}
	\While{$k<\N \And \vect{a}{}(k)=0$}		\Comment{\Rd Sub-Matrix $\mat{I}_A$ in \eqref{eq:extract1} \B}
	  \Let{$\C(k,k)$}{$1$}
	  \Let{$k$}{$k+1$}
	\EndWhile
	\Let{$\C(k,k)$}{$1$}
	\While{$k+1<\N \And \vect{a}{}(k+1)\neq0$}		\Comment{\Rd Sub-Matrix $\hat{\mat{C}}$ in \eqref{eq:extract1} \B}
      \Let{$\C(k,k+1)$}{$-\, \Rd\C(k,k) \B \cdot \vect{a}{}(k)\, /\, \vect{a}{}(k+1)$}
      \Let{$\C(k+1,k+1)$}{$1-\C(k,k+1)$}
	  \Let{$k$}{$k+1$}
	\EndWhile
	\While{$k<\N$}				\Comment{\Rd Sub-Matrix $\mat{I}_B$ in \eqref{eq:extract1} \B}
	  \Let{$\C(k,k+1)$}{$1$}
	  \Let{$k$}{$k+1$}
	\EndWhile
	\State \textbf{return}  \C
	\EndFunction
  \end{algorithmic}}
\end{algorithm}

\begin{algorithm}[t!]
  \caption{Generalized \Bezier extraction
    \label{alg:2}}
    {
  \begin{algorithmic}[1]
    \Statex
    \Function{extraction\_operator}{$\AA \in \RR^{\M\times\O}$}
	\Let{$\CC$}{identity matrix (size: $\M\times\M$)}					\Comment{Initialize extraction operator}
	\For{$\cons = 1:\O$}						\Comment{Loop over smoothness constraints}
		\Let{$\C$}{$\Call{nullspace}{\AA(:,\cons)}$}          	\Comment{Compute nullspace of $\cons$-th column of \AA}
		\Let{$\CC$}{$\C \ast \CC$}    			\Comment{update \CC}
		\Let{$\AA$}{$ \C \ast \AA$}				\Comment{update \AA}
	\EndFor
	\State \textbf{return}  \CC
	\EndFunction
    \Statex
 \end{algorithmic}}
\end{algorithm}

By applying \cref{alg:1} repeatedly the global Bernstein basis $\{\bs_{l}, \; l=1, \ldots, \M\}$ can be mapped to the smooth GTB-spline basis $\{\BS_k, \; k=1,\ldots, \N\}$. This procedure is called \highlight{\Bezier extraction} following terminology introduced in the polynomial spline context \cite{Borden:2011,Scott:2011}; we follow suit.

\begin{theorem}[\Bezier extraction] Let $\cons := \cons(i,j) := \sum_{k=1}^{i-1} (\r_k+1) + j+1$, and consider the following linear indexing of the smoothness constraints
\begin{equation*}
	\A{\cons}{\cdot} = \jump{\x_i, j}{\cdot}, \quad j = 0,\ldots,\r_i, \quad i=1,\ldots,\nelms-1.
\end{equation*}
Let the input into \cref{alg:2} be given by the matrix $\AA$ with matrix columns
\begin{equation*}
	\A{\cons}{\vect{\bs}{}},\quad \cons=1,\ldots, \O,
\end{equation*}
where the vector $\vect{\bs}{}$ collects the global Bernstein functions $\{\bs_{l}, \; l=1, \ldots, \M\}$; see \cref{eq:bezier-extr}.
Then, \cref{alg:2} produces a \Bezier extraction operator $\mat{C} \in \RR^{\N \times \M}$ that reproduces a GTB-spline basis $\{\BS_k, \; k=1,\ldots, \N\}$ for the spline space $\splSpacerpA$ according to
\begin{equation*}
	\vect{\BS}{}(\x) =  \CC \; \vect{\bs}{}(\x).
\end{equation*}
\end{theorem}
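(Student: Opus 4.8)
The plan is to show that \cref{alg:2} iteratively realizes the sequence of knot-insertion steps described in \cref{sec:4}, so that the accumulated extraction operator $\CC$ maps the global Bernstein basis to the GTB-spline basis of $\splSpacerpA$. The key observation is that the space $\splSpacep$ of discontinuous GT-splines (the $\smooth_i=-1$ case) is a GT-spline space in its own right, whose GTB-spline basis is exactly the global Bernstein basis $\{\bs_l\}$ by the discussion preceding \cref{eq:Bernstein-global}. Starting from this maximally discontinuous space, imposing the smoothness constraints one at a time — each constraint being a single jump condition $\jump{\x_i,j}{\cdot}=0$ — produces a nested chain of spline spaces, each a codimension-one subspace of the previous, terminating in $\splSpacerpA$ after all $\O$ constraints have been applied. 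First I would set up this chain explicitly: index the constraints via $\cons=\cons(i,j)$ as in the statement, and let $\splSpace^{(\cons)}$ denote the space obtained after imposing the first $\cons$ constraints, with $\splSpace^{(0)}=\splSpacep$ and $\splSpace^{(\O)}=\splSpacerpA$.

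The inductive engine is \cref{prop:knot_insertion} together with the nullspace proposition just proved. At each step $\cons$, the passage from $\splSpace^{(\cons-1)}$ to $\splSpace^{(\cons)}$ raises the smoothness at a single breakpoint $\x_i$ by one derivative order, which is precisely the knot-removal setup of \cref{prop:knot_insertion}: the current basis $\{\BS^{(\cons-1)}_k\}$ plays the role of the $\BS_k$ and the refined basis $\{\BS^{(\cons)}_k\}$ plays the role of the $\tilde\BS_k$. The crucial point is that the entries of the matrix $\C$ computed by \cref{alg:1} from the jump vector $\vect{a}{}$ in \cref{eq:a} coincide with the coefficients $\ca_k,\cb_{k+1}$ of \cref{prop:knot_insertion}; this is exactly the content of the preceding proposition, since those jumps are the nonzero entries of $\AA(:,\cons)$. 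Thus the matrix multiplication $\CC \gets \C \ast \CC$ correctly updates the extraction operator so that $\vect{\BS}{}^{(\cons)}(\x) = \CC \, \vect{\bs}{}(\x)$, while the update $\AA \gets \C\ast\AA$ correctly re-expresses all the remaining, not-yet-imposed jump functionals in terms of the current basis. I would verify this last point carefully: because the jump operators $\A{\cons'}{\cdot}$ are linear, applying the change of basis $\vect{\BS}{}^{(\cons)} = \C \, \vect{\BS}{}^{(\cons-1)}$ to a column $\A{\cons'}{\vect{\BS}{}^{(\cons-1)}}$ of $\AA$ yields $\A{\cons'}{\vect{\BS}{}^{(\cons)}} = \C\,\A{\cons'}{\vect{\BS}{}^{(\cons-1)}}$, so that the stored matrix $\AA$ retains its meaning as the jumps evaluated on the \emph{current} basis at every stage.

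After the loop terminates, the composite operator $\CC = \C^{(\O)} \cdots \C^{(1)}$ satisfies $\vect{\BS}{}(\x) = \CC\,\vect{\bs}{}(\x)$, where $\vect{\BS}{}$ is a basis of $\splSpacerpA$ enjoying all properties \cref{eq:support1}--\cref{eq:basis}: at each step \cref{prop:knot_insertion} guarantees the refined functions remain non-negative, locally supported, and form a partition of unity, and these properties are preserved along the chain, so the final output is a genuine GTB-spline basis. By the uniqueness observed in \cref{eq:support-characterisation} — each GTB-spline is determined up to a constant by its support triple, with the constant fixed by partition of unity — this basis is \emph{the} GTB-spline basis of $\splSpacerpA$. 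The main obstacle I anticipate is not any single estimate but the bookkeeping needed to confirm that \cref{alg:2} imposes the constraints in a valid order: one must check that raising the smoothness at $\x_i$ from $C^{j-1}$ to $C^{j}$ (constraint $\cons(i,j)$) is legitimate only after the lower-order jumps $\jump{\x_i,j'}{\cdot}$, $j'<j$, have already been imposed, since \cref{prop:knot_insertion} presupposes a $C^{\smooth_i}$ space when enforcing the $(\smooth_i+1)$-th jump. The linear indexing $\cons(i,j)=\sum_{k=1}^{i-1}(\r_k+1)+j+1$ is precisely engineered to process the constraints at each breakpoint in increasing order of $j$, so I would make this ordering explicit and confirm that each invocation of \cref{alg:1} indeed sees the configuration required by \cref{prop:knot_insertion}.
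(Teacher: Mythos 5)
Your proposal follows essentially the same route as the paper's own proof: the same nested chain of spaces $\splSpace^{(\cons)}$ obtained by imposing the $\O$ jump constraints one at a time, the same identification of each step with knot insertion via \cref{prop:knot_insertion} and the nullspace proposition, and the same induction (using linearity of the jump functionals together with the update $\AA \gets \C \ast \AA$) showing that the input to \cref{alg:1} equals $\A{\cons}{\vect{\BS}{(\cons-1)}}$ at every step. Your extra attention to the ordering of constraints --- processing each breakpoint's jumps in increasing derivative order so that \cref{prop:knot_insertion} applies --- is a point the paper leaves implicit in the indexing $\cons(i,j)$, and is a refinement of, not a departure from, the paper's argument.
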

\begin{proof}
Let $\SS{(\cons)}{\mbf{p}} $ denote the GT-spline space that satisfies the first $\cons$ linear smoothness constraints $\A{k}{\cdot}$, $k = 1,\ldots, \cons$, in \cref{def:spline-space}, and let $\vect{\BS}{(\cons)}(\x)$ denote its corresponding GTB-spline basis. Note that $\SS{(0)}{\mbf{p}} \equiv \splSpacep$, $\SS{(\O)}{\mbf{p}} \equiv \splSpacerpA$ and $\SS{(\cons)}{\mbf{p}} \subset \SS{(\cons-1)}{\mbf{p}}$.

The space $\SS{(\cons-1)}{\mbf{p}} $ can be obtained from $\SS{(\cons)}{\mbf{p}} $ by inserting a single knot into its corresponding knot vectors. Consequently, there exists $\C_\cons \in \RR^{(\M-\cons) \times (\M-\cons+1)}$ with its structure given by \cref{eq:extract1} such that $\vect{\BS}{(\cons)}(\x)  = \C_{\cons} \; \vect{\BS}{(\cons-1)}(\x)$. Repeating this argument we observe that
\begin{equation*}
	\CC = \C_{\O}  \cdots \C_{2}  \C_{1}   .
\end{equation*}
\cref{alg:2} implements this recursion in line 5. It remains to show that at each step, $\cons$, the input into \cref{alg:1} is such that it reproduces operator $\C_\cons$. The correct input is given by the vector, $\vect{a}{} = \A{\cons}{\vect{\BS}{(\cons-1)}}$, such that
\begin{equation*}
	 \C_{\cons} \; \vect{a}{}  =
	 \C_{\cons} \; \A{\cons}{\vect{\BS}{(\cons-1)}} =
	 \A{\cons}{\C_{\cons}  \; \vect{\BS}{(\cons-1)}} =
	 \A{\cons}{\vect{\BS}{(\cons)}} = \vect{0}{}.
\end{equation*}
We have that
\begin{enumerate}
	\item[(i)]  at step $1$ the input into \cref{alg:1} is
	\begin{equation*}
		\A{1}{\vect{\bs}{}} = \A{1}{\vect{\BS}{(0)}};
	\end{equation*}
	\item[(ii)] the update in line 6 shows that at step $\cons$ the input into \cref{alg:1} is
	\begin{equation*}
		\C_{\cons-1} \cdots \C_1  \A{\cons}{\vect{\bs}{}} =
		 \C_{\cons-1} \cdots \C_{2} \A{\cons}{\vect{\BS}{(1)}} =
		\C_{\cons-1}  \A{\cons}{\vect{\BS}{(\cons-2)}} =
		\A{\cons}{\vect{\BS}{(\cons-1)}}.
	\end{equation*}
\end{enumerate}
Hence, by induction, the input into \cref{alg:1} is correct at every step of the recursion. Consequently, \cref{alg:2} produces the expected output.
\end{proof}

\begin{remark}
Because the coefficients $\ca_k$ and $\cb_k$ are positive and sum to $1$, \cref{alg:1} is numerically stable. Hence, the computation of the matrix $\C$ will be accurate as long as the vector $\vect{a}{}$ is known to sufficient precision. In practice this means that we require accurate and stable evaluation of Bernstein functions and their higher-order derivatives at the breakpoints.
\end{remark}

\begin{remark}
For polynomial B-splines of non-uniform degree, so-called \highlight{multi-degree B-splines}, \Bezier extraction has been analyzed and successfully applied in \cite{Toshniwal:2017a,Toshniwal:2018b}.
An efficient \textsc{Matlab} toolbox implementation illustrating \cref{alg:1,alg:2} can be found in \cite{Speleers:2018}.
\end{remark}

\begin{remark}\label{rmk:abstract-basis}
Whenever the space $\splSpacerp$ admits a basis $\{\BS_k, \; k=1,\ldots, \N\}$ with the properties listed in \cref{thm:B-splines,thm:knotinsertion}, \cref{alg:2} (using \cref{alg:1}) can be applied for an efficient evaluation of these basis functions. In other words, the algorithm does not require that the space $\splSpacerp$ is identified by an admissible weight system (see \cref{weight:assumption}).
\end{remark}

% \begin{remark}
% The basis $\{\MS_k, \; k=1,\ldots, \N\}$ can be evaluated by means of the presented  algorithm as well: it suffices to replace the Bernstein basis for the space $\splSpacep$ by the analogous locally supported basis whose elements are normalized to satisfy the unit integral constraint instead of the partition of unity.
% \end{remark}

% generalized polynomial B-splines
\section{Case study: generalized polynomial B-splines} \label{sec:6_gb}
In this section we consider a special class of GTB-splines, the so-called \highlight{generalized polynomial B-splines (GPB-splines)}; see \cite{ManniRS:2017} and references therein. They can be seen as the minimal extension of polynomial splines of non-uniform degree still offering a wide variety of additional flexibility.

Given $\p_i\geq2$, let $\mathfrak{u}^{(i)},\mathfrak{v}^{(i)}\in C^{\p_i}([\x_{i-1}, \x_{i}])$, and
\begin{equation*}
U^{(i)}:=D^{\p_i-1}\mathfrak{u}^{(i)}, \quad V^{(i)}:=D^{\p_i-1}\mathfrak{v}^{(i)},
\end{equation*}
such that $\GP{}{(i)}:=\Span{U^{(i)},V^{(i)}}$ is an ECT-space on $[\x_{i-1}, \x_{i}]$.
There exists a unique couple of functions $\normalize{U}^{(i)},\normalize{V}^{(i)}\in \GP{}{(i)}$ such that
% \begin{align*}
% \normalize{U}(\x_{i-1})&=1,\quad \normalize{U}(\x_{i})=0, \quad \normalize{U}(\x)>0, \quad \x\in(\x_{i-1},\x_{i}),\\
% \quad \normalize{V}(\x_{i-1})&=0,\quad \normalize{V}(\x_{i})=1, \quad \normalize{V}(\x)>0, \quad \x\in(\x_{i-1},\x_{i}).
% \end{align*}
\begin{equation*}
\normalize{U}^{(i)}(\x_{i-1})=1,\quad \normalize{U}^{(i)}(\x_{i})=0, \quad
\quad \normalize{V}^{(i)}(\x_{i-1})=0,\quad \normalize{V}^{(i)}(\x_{i})=1.
\end{equation*}
The generalized polynomial space of degree $\p_i\geq2$ on the closed interval $[\x_{i-1}, \x_{i}]$ is then defined by
\begin{equation} \label{eq:gb-space}
\GP{\p_i}{(i)} := \Span{1, \x, \x^2, \ldots, \x^{\p_i-2},\mathfrak{u}^{(i)}(\x), \mathfrak{v}^{(i)}(\x)}.
\end{equation}
We refer to \cite{Costantini:2005} for more details on such spaces and their properties.

\begin{example}\label{ex:examples_space}
Popular choices for $\mathfrak{u}^{(i)}$ and $\mathfrak{v}^{(i)}$ are given by
% \begin{align*}
%     \mathfrak{u}^{(i)}(\x) &=  \x^{\p_i-1}, \quad \mathfrak{v}^{(i)}(\x) = \x^{\p_i}, \\
%     \mathfrak{u}^{(i)}(\x) &= \sinh(\omega\x), \quad \mathfrak{v}^{(i)}(\x) = \cosh(\omega\x), \quad 0<\omega,\\
%     \mathfrak{u}^{(i)}(\x) &= \sin(\omega\x), \quad \mathfrak{v}^{(i)}(\x) = \cos(\omega\x), \quad 0<\omega(\x_{i}-\x_{i-1})<\pi,
% \end{align*}
\begin{itemize}
\item   $\mathfrak{u}^{(i)}(\x) = \x^{\p_i-1}, \quad \mathfrak{v}^{(i)}(\x) = \x^{\p_i}$,
\item   $\mathfrak{u}^{(i)}(\x) = \sinh(\omega\x), \quad \mathfrak{v}^{(i)}(\x) = \cosh(\omega\x), \quad 0<\omega$,
\item   $\mathfrak{u}^{(i)}(\x) = \sin(\omega\x), \quad \mathfrak{v}^{(i)}(\x) = \cos(\omega\x), \quad 0<\omega(\x_{i}-\x_{i-1})<\pi$,
\end{itemize}
which correspond to the classical polynomial, exponential and trigonometric spaces, respectively.
\end{example}

\begin{remark} Once $\mathfrak{u}^{(i)}$ and $\mathfrak{v}^{(i)}$ are chosen they uniquely define $U^{(i)}$ and $V^{(i)}$. Conversely, if $U^{(i)}$ and $V^{(i)}$ are chosen they uniquely define the space $\GP{\p_i}{(i)}$.
\end{remark}

\begin{remark} \label{rmk:gb-weights}
  From \cite[Example~10]{Lyche:2019} we know that the space $\GP{\p_i}{(i)}$ is an ECT-space generated by the weights
  \begin{align*}
    &w^{(i)}_0(\x) =\cdots=w^{(i)}_{\p_i-2}(\x)=1, \\
    &w^{(i)}_{\p_i-1}(\x) = \normalize{U}^{(i)}(\x)+\normalize{V}^{(i)}(\x), \\
    &w^{(i)}_{\p_i}(\x) = \frac{\normalize{U}^{(i)}(\x)D\normalize{V}^{(i)}(\x)-\normalize{V}^{(i)}(\x)D\normalize{U}^{(i)}(\x)}{\big(\normalize{U}^{(i)}(\x)+\normalize{V}^{(i)}(\x)\big)^2}.
  \end{align*}
  It can be easily checked that these local weights are admissible for the spline space $\splSpacerp$, where the different pieces belong to $\GP{\p_i}{(i)}$, $i=1, \ldots, m$ (see \cref{weight:assumption}), whenever $\r_i<\min\{\p_i,\p_{i+1}\}$. Moreover, they fulfill the partition-of-unity assumption in \cref{eq:weights-pou}.
\end{remark}

From \cref{rmk:gb-weights,thm:B-splines} it follows that there exist GTB-splines for spline spaces composed of generalized polynomial spaces as in \cref{eq:gb-space} of possibly different dimensions. These GTB-splines can be computed by the algorithmic procedure described in \cref{sec:5} starting from the local Bernstein bases. In the remainder of the section we discuss and illustrate the Bernstein basis in case of generalized polynomial spaces.

Using the explicit expressions of the weights provided in \cref{rmk:gb-weights}, we can simplify the recurrence relation in \cref{eq:rec-bern-0}--\cref{eq:rec-bern-q} of the local Bernstein functions $\bs^{(i)}_{j} := \bs_{j,\p_{i}}$, $j=0, \ldots, \p_i$ as follows; see also \cite[Section 4]{Lyche:2019}. For $\q=1,\ldots, \p_i$ and $j=0,\ldots,\q$, the function $\bs_{j,\q}$ can be evaluated at $\x \in [\x_{i-1}, \x_{i}]$ as
\begin{equation} \label{eq:rec-bern-gb-1}
\bs_{0,1}(\x) := \normalize{U}^{(i)}(\x), \quad \bs_{1,1}(\x) := \normalize{V}^{(i)}(\x),
\end{equation}
and
\begin{equation} \label{eq:rec-bern-gb-q}
\bs_{j,\q}(\x) := \begin{dcases}
1-\int_{\x_{i-1}}^\x \dfrac{\bs_{0,\q-1}(\y)}{\bbint_{0,\q-1}}\dint\y, & j=0,
\\
\int_{\x_{i-1}}^\x \biggl[\dfrac{\bs_{j-1,\q-1}(\y)}{\bbint_{j-1,\q-1}} - \dfrac{\bs_{j,\q-1}(\y)}{\bbint_{j,\q-1}}\biggr]\dint\y, &  0< j < q,
\\
\int_{\x_{i-1}}^\x \dfrac{\bs_{\q-1,\q-1}(\y)}{\bbint_{\q-1,\q-1}}\dint\y, & j=\q,
\end{dcases}\quad \q>1,
\end{equation}
where $\bbint_{j, \q-1}$ is defined in \cref{eq:bern-integral}.

\begin{example}\label{ex:gb-spaces-pol}
The classical (polynomial) Bernstein basis of degree $\p_i=\q$ on $[\x_{i-1}, \x_{i}]=[0,1]$ can be expressed as
\begin{equation*}  
\bs_{j,\q}(\x)=\binom{\q}{j}\x^{j}(1-\x)^{\q-j},
\quad j=0, \ldots, \q.
\end{equation*}
\end{example}

\begin{example}\label{ex:gb-spaces-exp}
The generalized Bernstein basis for the exponential space in \cref{ex:examples_space} defined on $[\x_{i-1}, \x_{i}]=[0,1]$ reads for degree $\q=1$,
\begin{align*}  
  \bs_{0,1}(\x) = \frac{\sinh(\omega(1-\x))}{\sinh(\omega)}, \quad
  \bs_{1,1}(\x) = \frac{\sinh(\omega \x)}{\sinh(\omega)},
\end{align*}
and for degree $\q=2$,
% \begin{align*}
%   \bs_{0,2}(\x) &= \frac{1-\cosh(\omega(1-\x))}{1-\cosh(\omega)}, \\
%   \bs_{1,2}(\x) &= \frac{\cosh(\omega(1-\x))+\cosh(\omega \x)-\cosh(\omega)-1}{1-\cosh(\omega)}, \\
%   \bs_{2,2}(\x) &= \frac{1-\cosh(\omega \x)}{1-\cosh(\omega)}.
% \end{align*}
\begin{gather*}
  \bs_{0,2}(\x) = \frac{1-\cosh(\omega(1-\x))}{1-\cosh(\omega)}, \quad
  \bs_{2,2}(\x) = \frac{1-\cosh(\omega \x)}{1-\cosh(\omega)}, \\
  \bs_{1,2}(\x) = \frac{\cosh(\omega(1-\x))+\cosh(\omega \x)-\cosh(\omega)-1}{1-\cosh(\omega)}.
\end{gather*}
\end{example}

\begin{example}\label{ex:gb-spaces-trig}
The Bernstein basis for the trigonometric space in \cref{ex:examples_space} defined on $[\x_{i-1}, \x_{i}]=[0,1]$ reads for degree $\q=1$,
\begin{align*} 
  \bs_{0,1}(\x) = \frac{\sin(\omega(1-\x))}{\sin(\omega)}, \quad
  \bs_{1,1}(\x) = \frac{\sin(\omega \x)}{\sin(\omega)},
\end{align*}
and for degree $\q=2$,
% \begin{align*}
%   \bs_{0,2}(\x) &= \frac{1-\cos(\omega(1-\x))}{1-\cos(\omega)}, \\
%   \bs_{1,2}(\x) &= \frac{\cos(\omega(1-\x))+\cos(\omega \x)-\cos(\omega)-1}{1-\cos(\omega)}, \\
%   \bs_{2,2}(\x) &= \frac{1-\cos(\omega \x)}{1-\cos(\omega)}.
% \end{align*}
\begin{gather*}
  \bs_{0,2}(\x) = \frac{1-\cos(\omega(1-\x))}{1-\cos(\omega)}, \quad 
  \bs_{2,2}(\x) = \frac{1-\cos(\omega \x)}{1-\cos(\omega)},\\
  \bs_{1,2}(\x) = \frac{\cos(\omega(1-\x))+\cos(\omega \x)-\cos(\omega)-1}{1-\cos(\omega)}.
\end{gather*}
\end{example}

\begin{remark}
Instead of using the recurrence relation \cref{eq:rec-bern-gb-1}--\cref{eq:rec-bern-gb-q}, each Bernstein function $\bs^{(i)}_{j}$ can also be computed by solving in the space $\GP{\p_i}{(i)}$ the  Hermite interpolation problem stated in \cref{rmk:Hermite-Bernstein}.
%in the space $\GP{\p_i}{(i)}$: for $j=0$,
%\begin{equation*}
%  \bs^{(i)}_{0}(\x_{i-1})=1, \quad
%  D^{l}\bs^{(i)}_{0}(\x_{i})=0, \quad l=0,\ldots,\p_i-1,
%\end{equation*}
%and for $j>0$,
%\begin{align*}
%  D^{l}\bs^{(i)}_{j}(\x_{i-1})&=0, \quad  l=0,\ldots,j-1, \quad
%  D^{l}\bs^{(i)}_{j}(\x_{i})=0, \quad l=0,\ldots,\p_i-j-1, \\
%  D^{j}\bs^{(i)}_{j}(\x_{i-1})&=-\sum_{l=0}^{j-1}D^{j}\bs^{(i)}_{l}(\x_{i-1}).
%\end{align*}
\end{remark}

GT-spline spaces with pieces drawn from (different) generalized polynomial spaces containing polynomial, exponential or trigonometric functions (see, e.g., \cref{ex:gb-spaces-pol,ex:gb-spaces-trig}) are of particular interest both in geometric design and numerical simulation because they offer a valid alternative to NURBS. Indeed, they allow for a locally exact representation of conic sections with respect to (almost) arc length, and moreover, the derivative  spaces belong to the same class, exactly as for polynomial splines; see \cite{ManniRS:2017} and references therein for further details.

% Examples
\section{Numerical examples} \label{sec:6}
In this section we present two numerical examples to illustrate the algorithmic procedure in \cref{sec:5} and a simple application of GTB-splines for exact smooth representation of profiles containing conic section segments.

\begin{example} \label{ex:1}
Consider a GT-spline space $\splSpacerp$ defined by
\begin{equation*}
\domain = \{0,1,5/2,5\}, \quad
\mbf{\p}{} = \{2,3,4\}, \quad
\mbf{\r}{} = \{-1,2,2,-1\},
\end{equation*}
and
\begin{gather*}
\ECT{2}{(1)}=\Span{1, \x, \x^2}, 
\quad
\ECT{3}{(2)}=\Span{1, \x, \cos(\pi\x/2), \sin(\pi\x/2)}, \\
\ECT{4}{(3)}=\Span{1, \x, \x^2, \sinh(10\x), \cosh(10\x)}.
\end{gather*}
All these spaces are special instances of generalized polynomial spaces discussed in \cref{sec:6_gb}. Sequences of admissible weights for $\splSpacerp$ can be computed as described in \cref{rmk:gb-weights} by taking into account the explicit expressions for $\normalize{U}^{(i)}$ and $ \normalize{V}^{(i)}$, $i=1,2,3$:
\begin{alignat*}{3}
  &\normalize{U}^{(1)}(x)=1-x, \quad 
  %&&\normalize{U}^{(2)}=\sin(\pi x/2)-\cos(\pi x/2), \quad 
  &&\normalize{U}^{(2)}=-\sqrt{2}\cos(\pi/4+\pi x/2), \quad
  &&\normalize{U}^{(3)}=\frac{\sinh(50-10x)}{\sinh(25)}, \\ 
  &\normalize{V}^{(1)}(x)=x, \quad
  &&\normalize{V}^{(2)}=-\sqrt{2}\cos(\pi x/2), \quad
  &&\normalize{V}^{(3)}=-\frac{\sinh(25-10x)}{\sinh(25)}. 
\end{alignat*}
The resulting GT-spline space $\splSpacerp$ has dimension $6$ and, in view of \cref{thm:B-splines}, possesses a GTB-spline basis. For this space, the knot vectors, $\mbf{\lknot}$ and $\mbf{\rknot}$, and the start-point and end-point smoothness, $\lsmooth$ and $\rsmooth$, are depicted in \cref{tab:ex1}.
\cref{fig:ex1} shows the spline functions constructed at different iterations in \cref{alg:2}, together with their first and second derivatives. Starting from the 12 global Bernstein basis functions of $\splSpacep$ (\cref{fig:ex1}, first row), \cref{alg:2} incrementally increases the smoothness at the breakpoints  until  the $6$ final GTB-splines (\cref{fig:ex1}, last row) are obtained.
\end{example}

\begin{table}[t!]
 \centering
\caption{The triples $\left([\lknot_k, \rknot_k],\lsmooth(k),\rsmooth(k)\right)$ defining $\BS_k\in \splSpacerp$ for $k=1,\ldots,\N$ as in \cref{ex:1}.}
\label{tab:ex1}
 \begin{tabular}{|c|r|r|r|r|r|r|}
 \hline
 $k$           & 1   & 2 & 3 & 4 & 5   & 6    \\ \hline  \hline
 $\lknot_k$    & 0   & 0 & 0 & 1 & 5/2 & 5/2  \\
 $\rknot_k$    & 5/2 & 5 & 5 & 5 & 5   & 5    \\ \hline \hline
 $\lsmooth(k)$ & -1  & 0 & 1 & 2 & 2   & 3    \\ \hline \hline
 $\rsmooth(k)$ & 2   & 3 & 2 & 1 & 0   & -1   \\ \hline
 \end{tabular}
\end{table}

\begin{figure}[t!]
\centering
\subfigure[$\mbf{\r}{} = \{-1,-1,-1,-1\}$]
{\includegraphics[height=3cm]{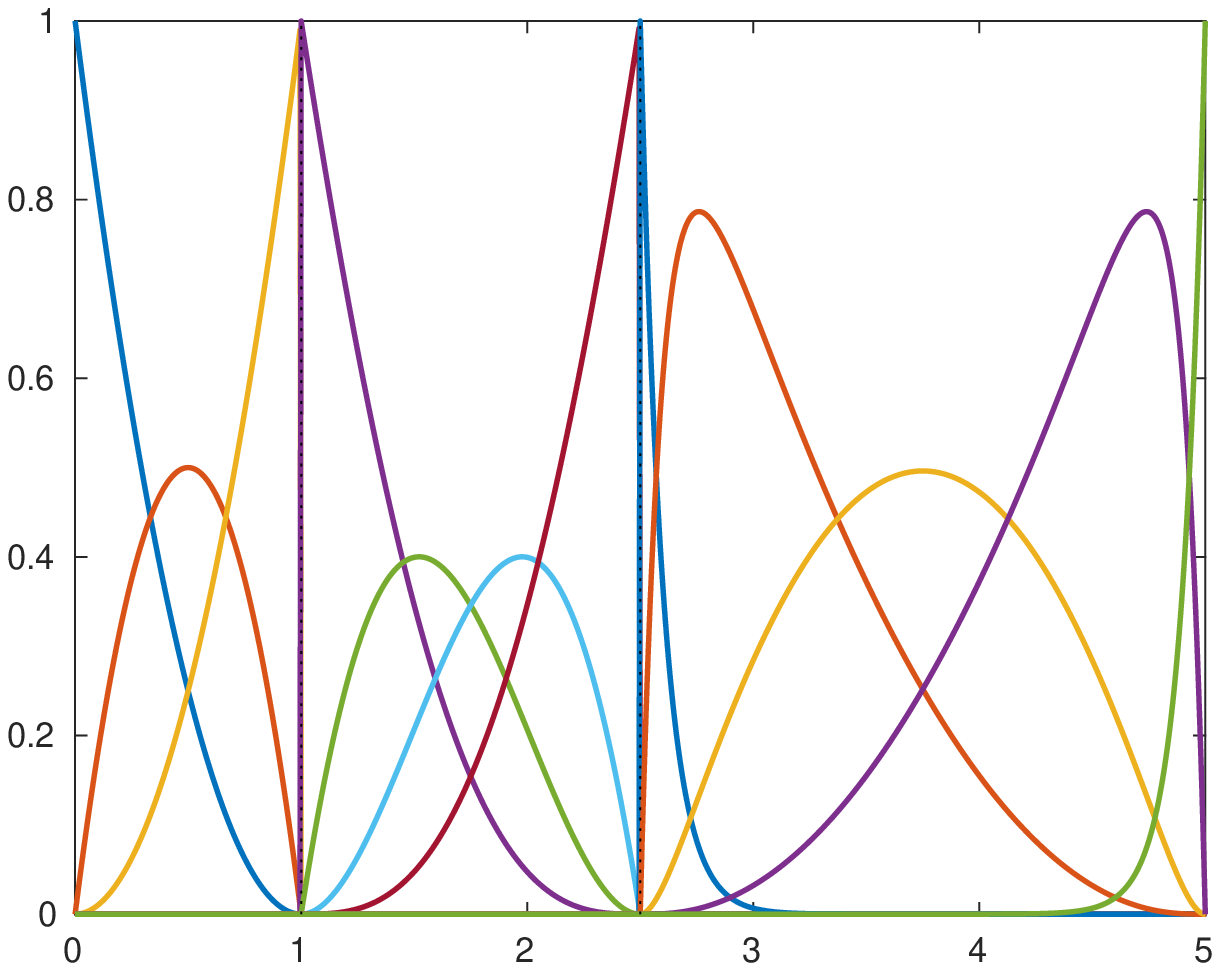} 
\hspace*{0.2cm}
\includegraphics[height=3cm]{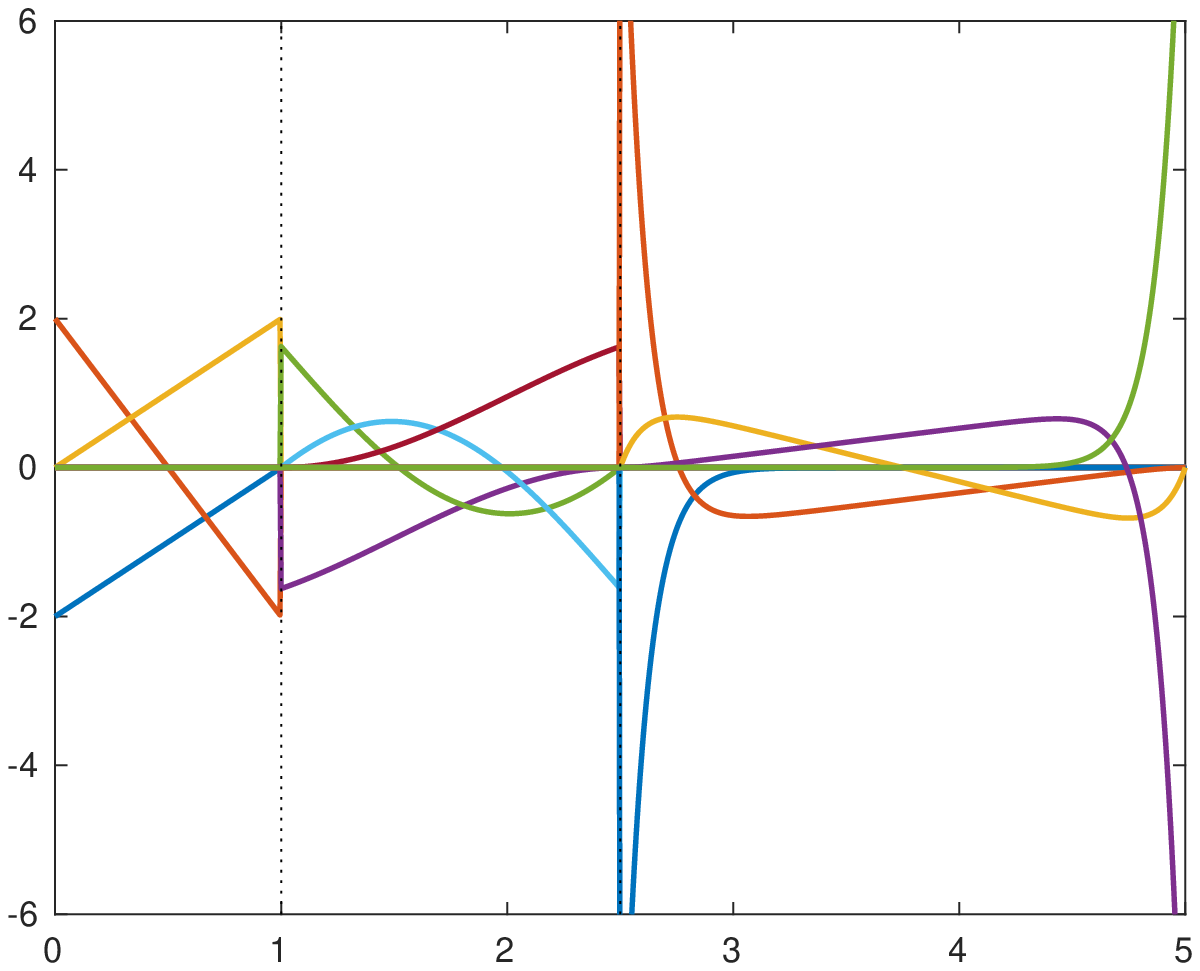}
\hspace*{0.2cm}
\includegraphics[height=3cm]{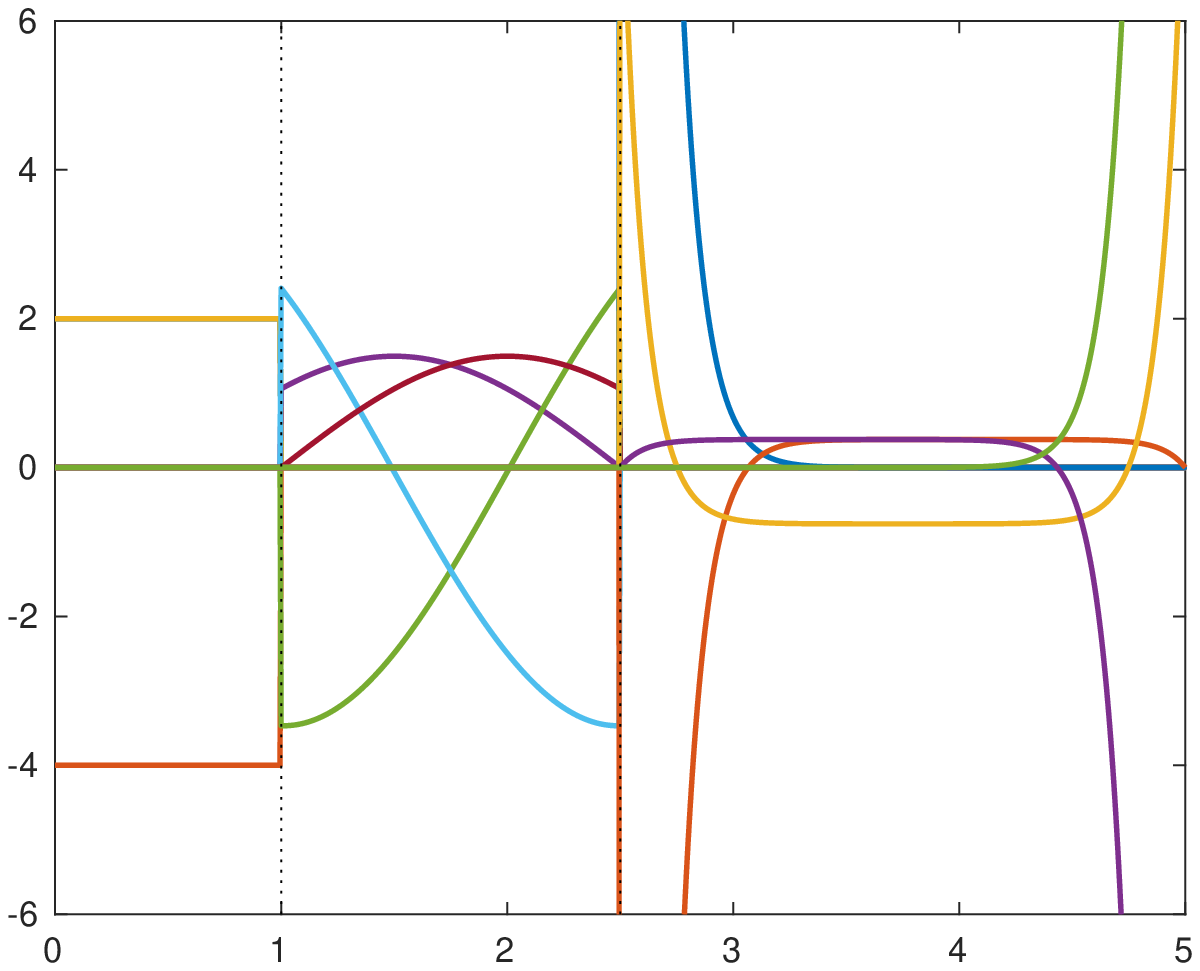}}
\subfigure[$\mbf{\r}{} = \{-1,0,0,-1\}$]
{\includegraphics[height=3cm]{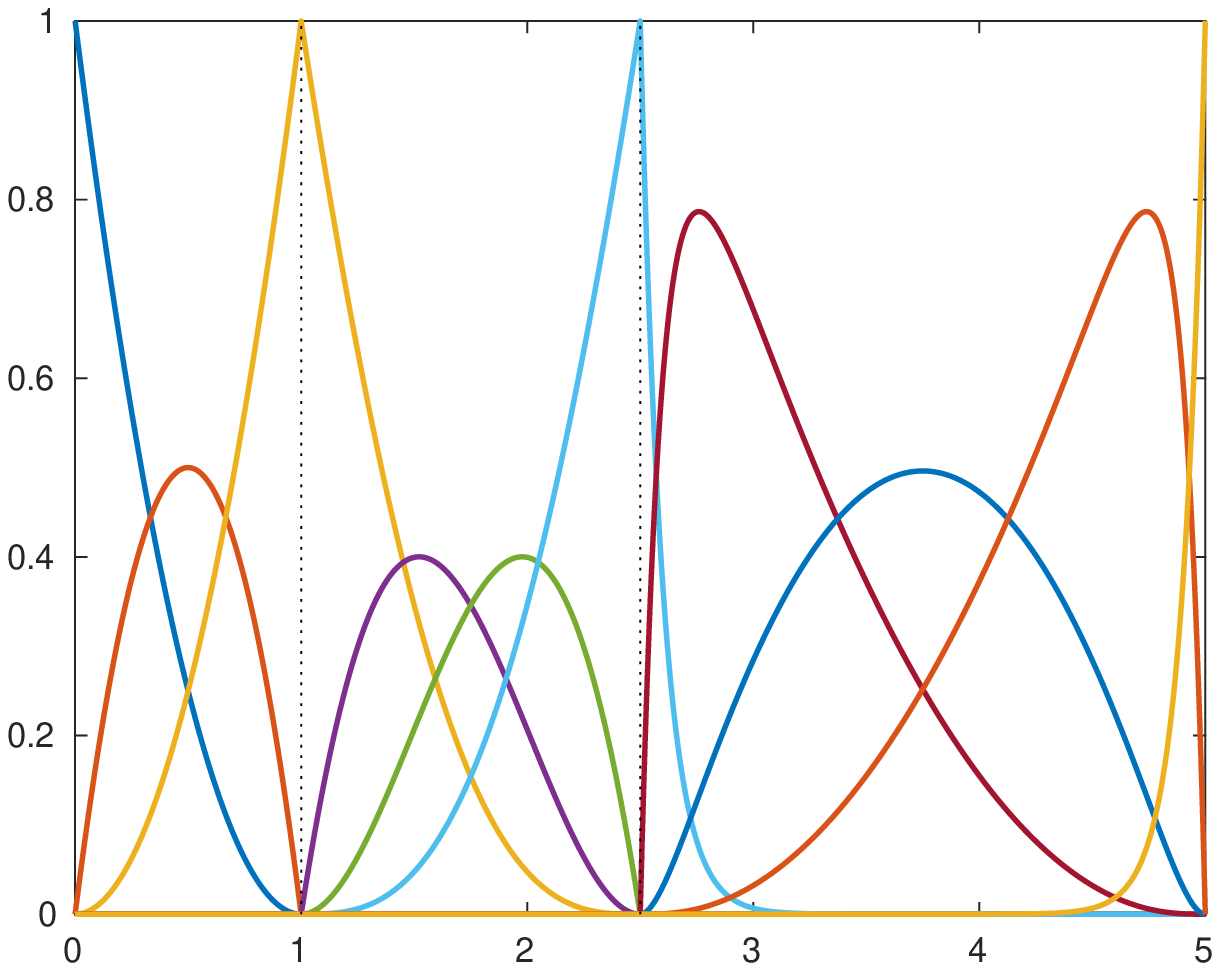}
\hspace*{0.2cm}
\includegraphics[height=3cm]{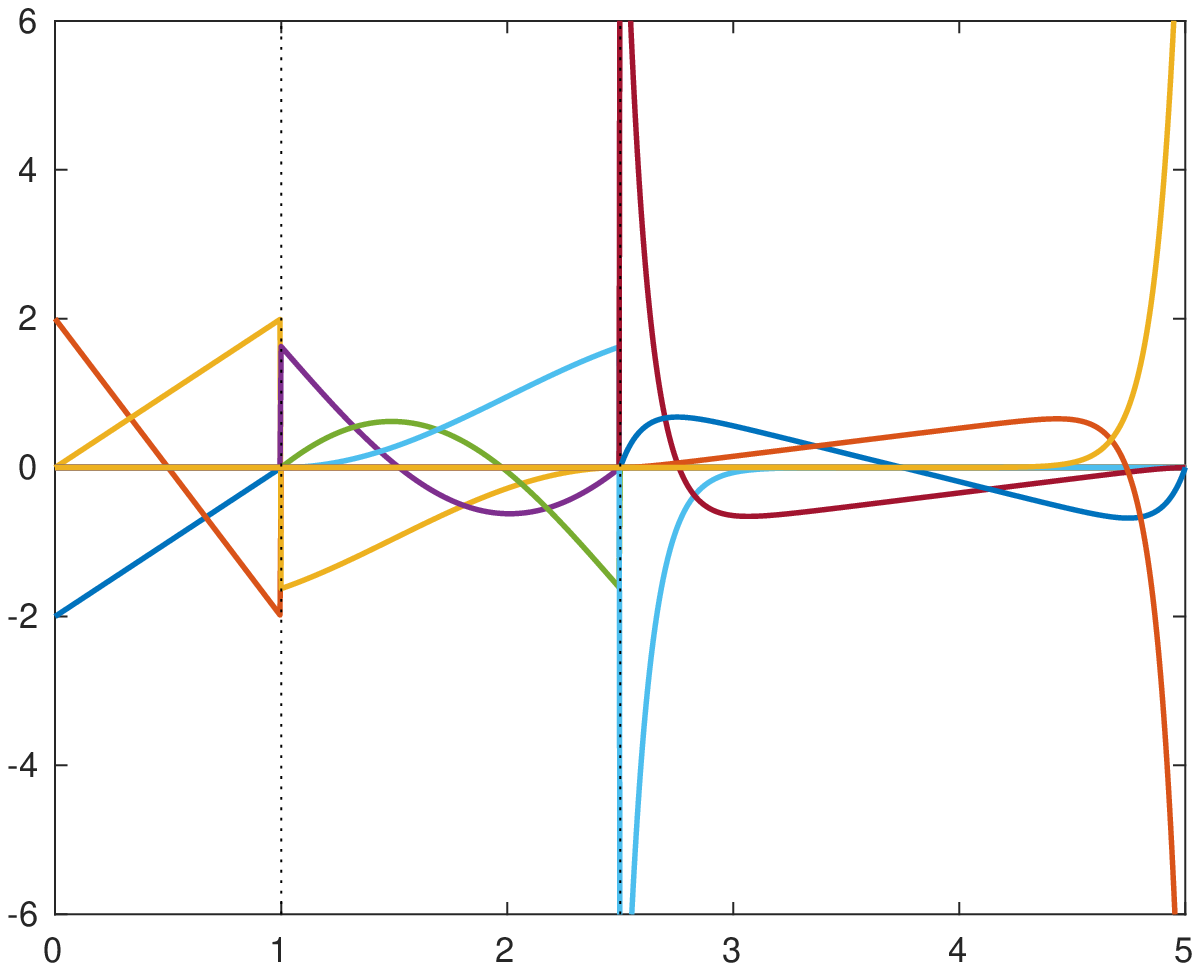}
\hspace*{0.2cm}
\includegraphics[height=3cm]{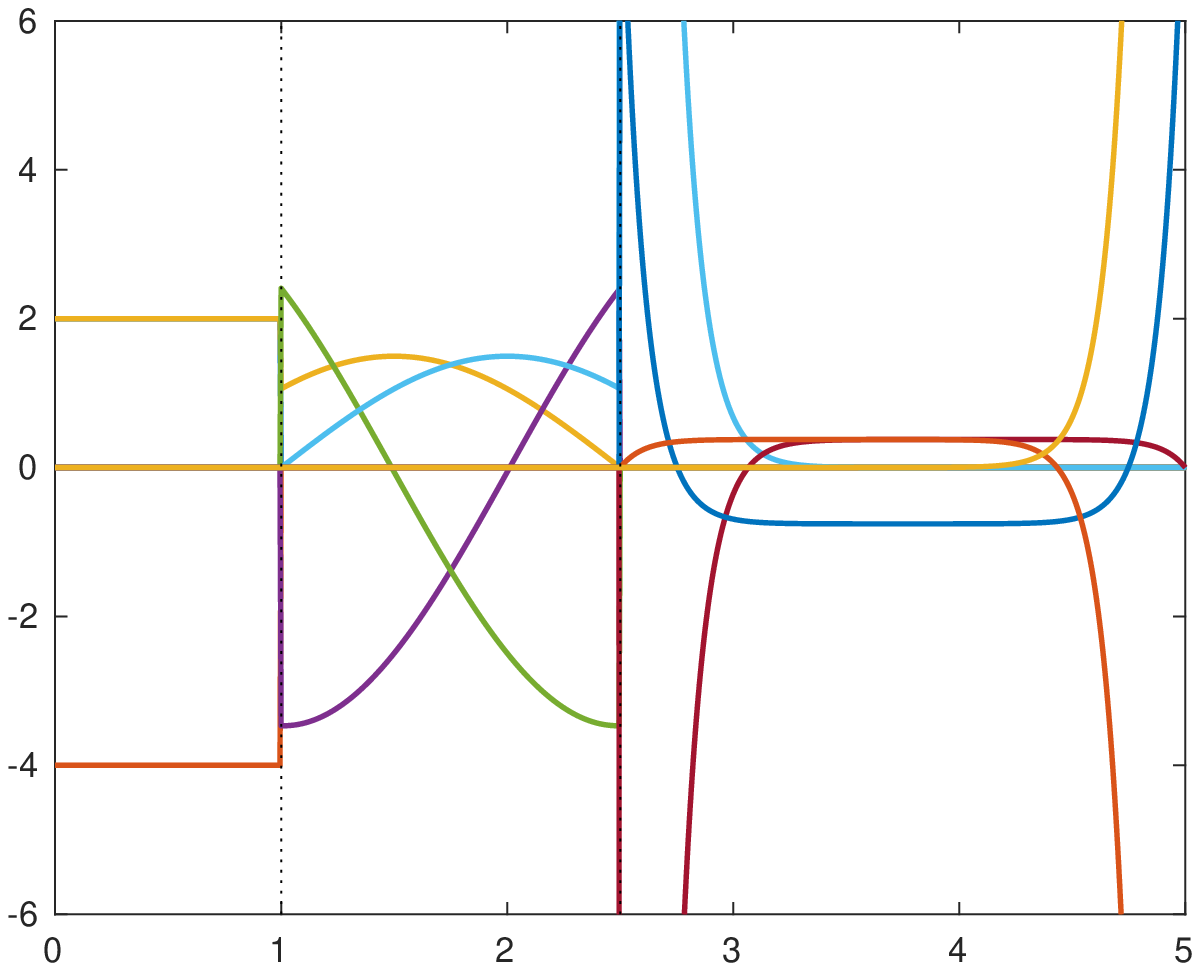}}
\subfigure[$\mbf{\r}{} = \{-1,1,1,-1\}$]
{\includegraphics[height=3cm]{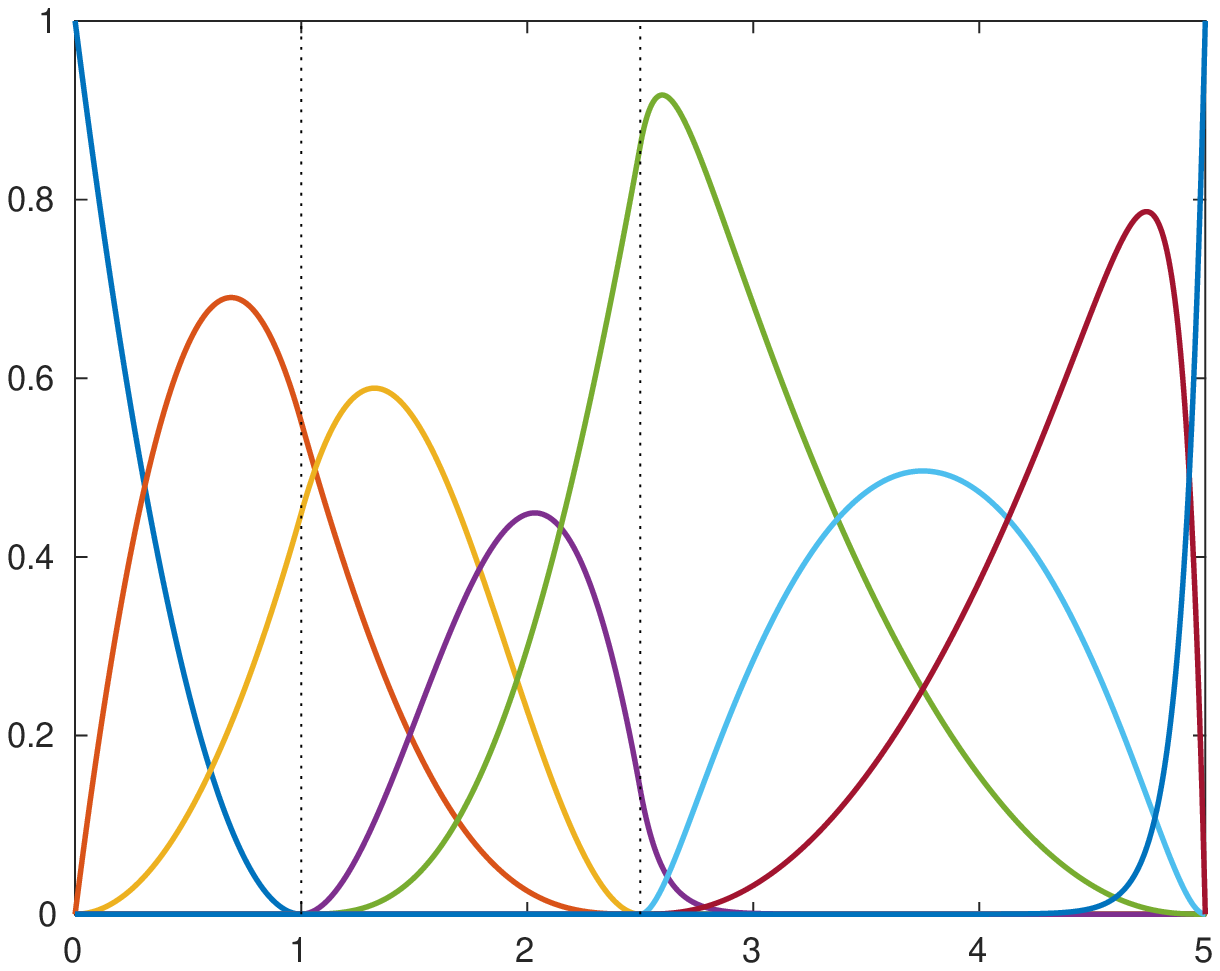}
\hspace*{0.2cm}
\includegraphics[height=3cm]{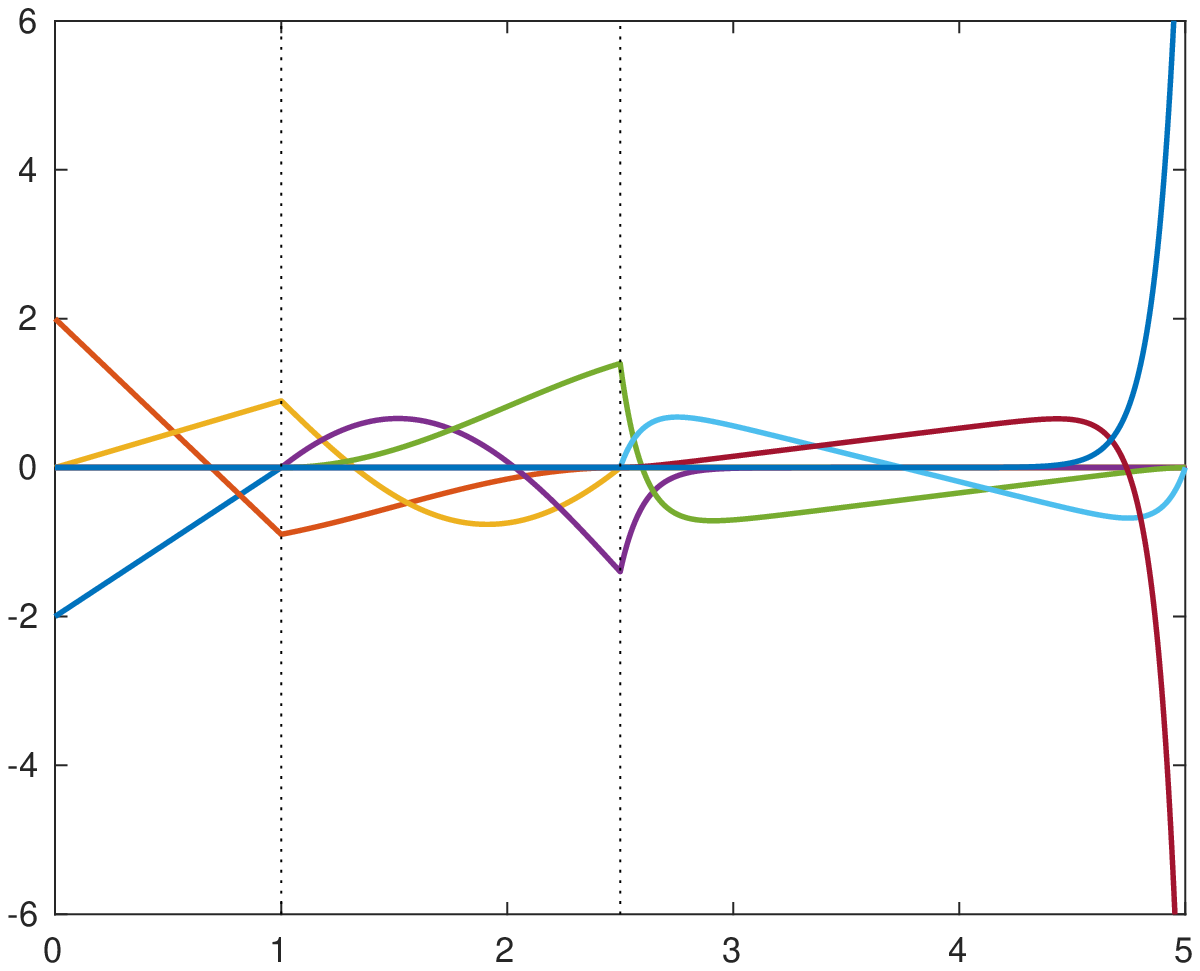}
\hspace*{0.2cm}
\includegraphics[height=3cm]{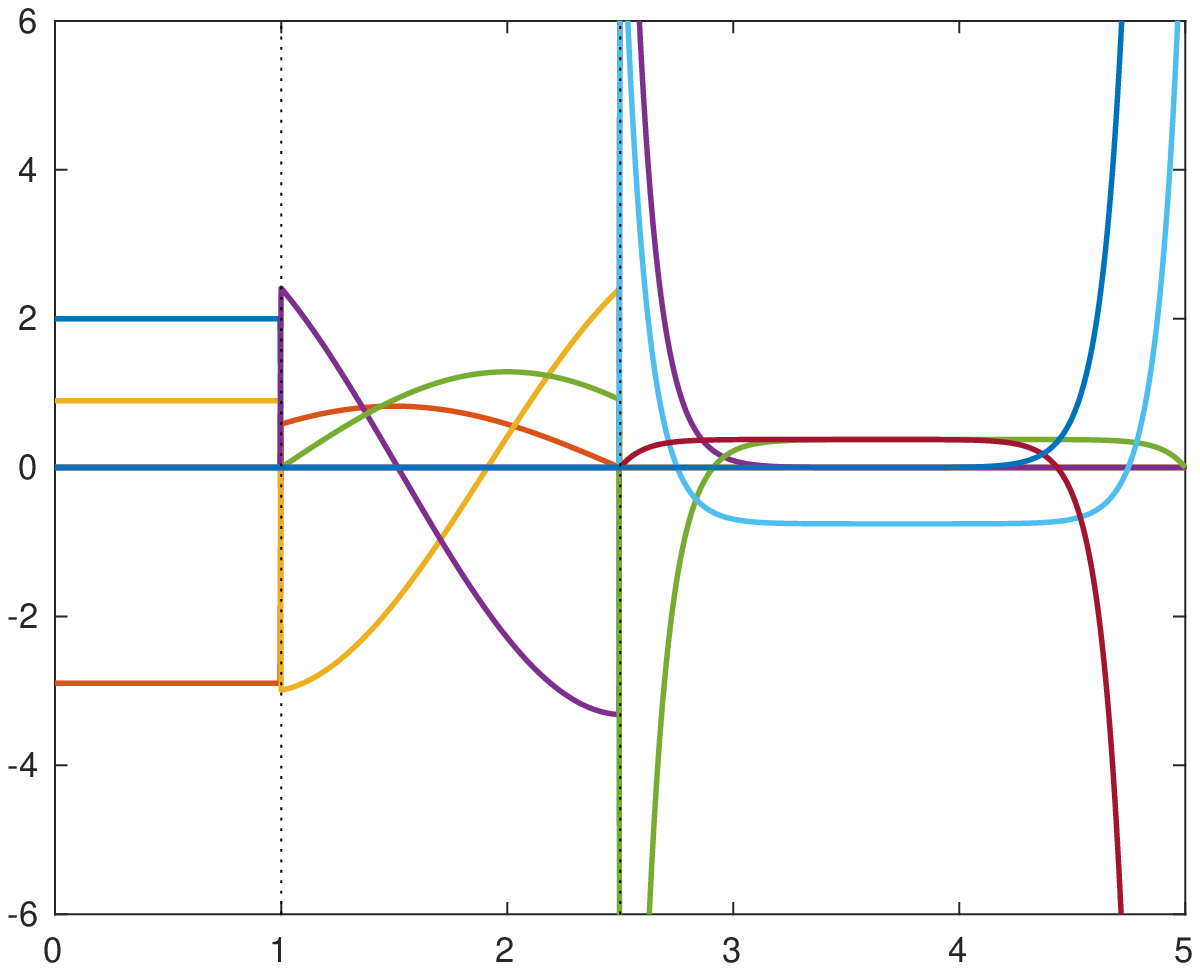}}
\subfigure[$\mbf{\r}{} = \{-1,2,2,-1\}$]{
\includegraphics[height=3cm]{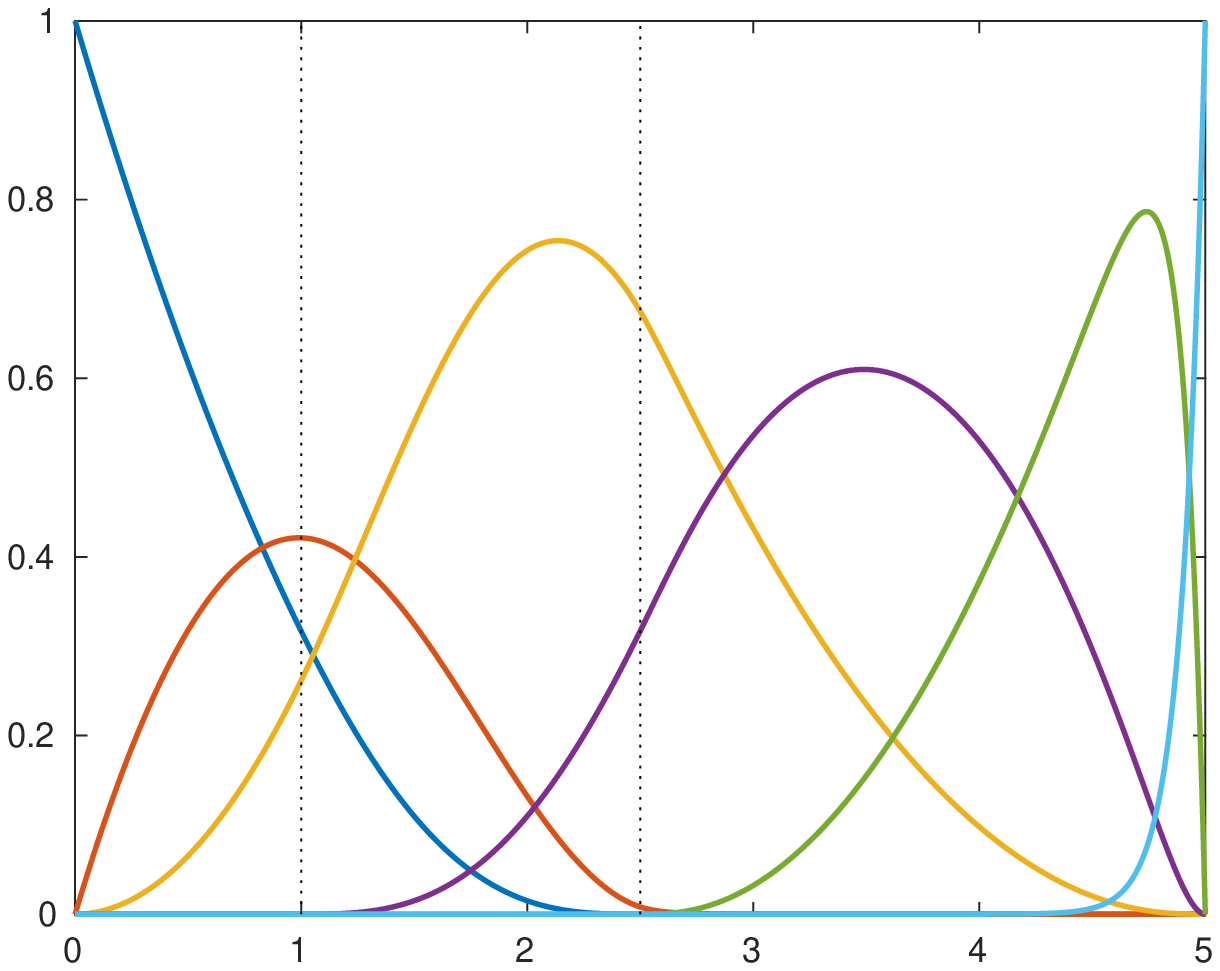}
\hspace*{0.2cm}
\includegraphics[height=3cm]{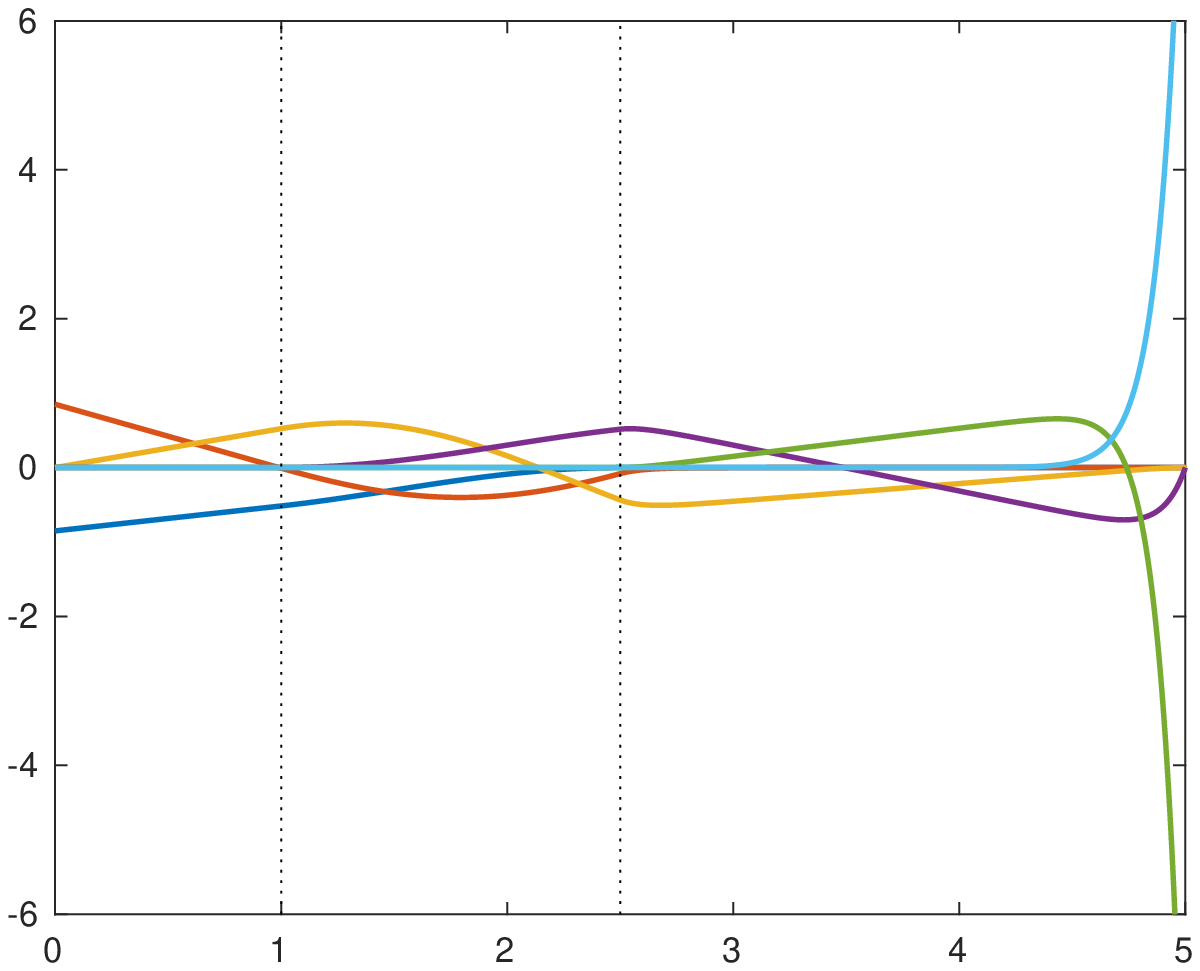}
\hspace*{0.2cm}
\includegraphics[height=3cm]{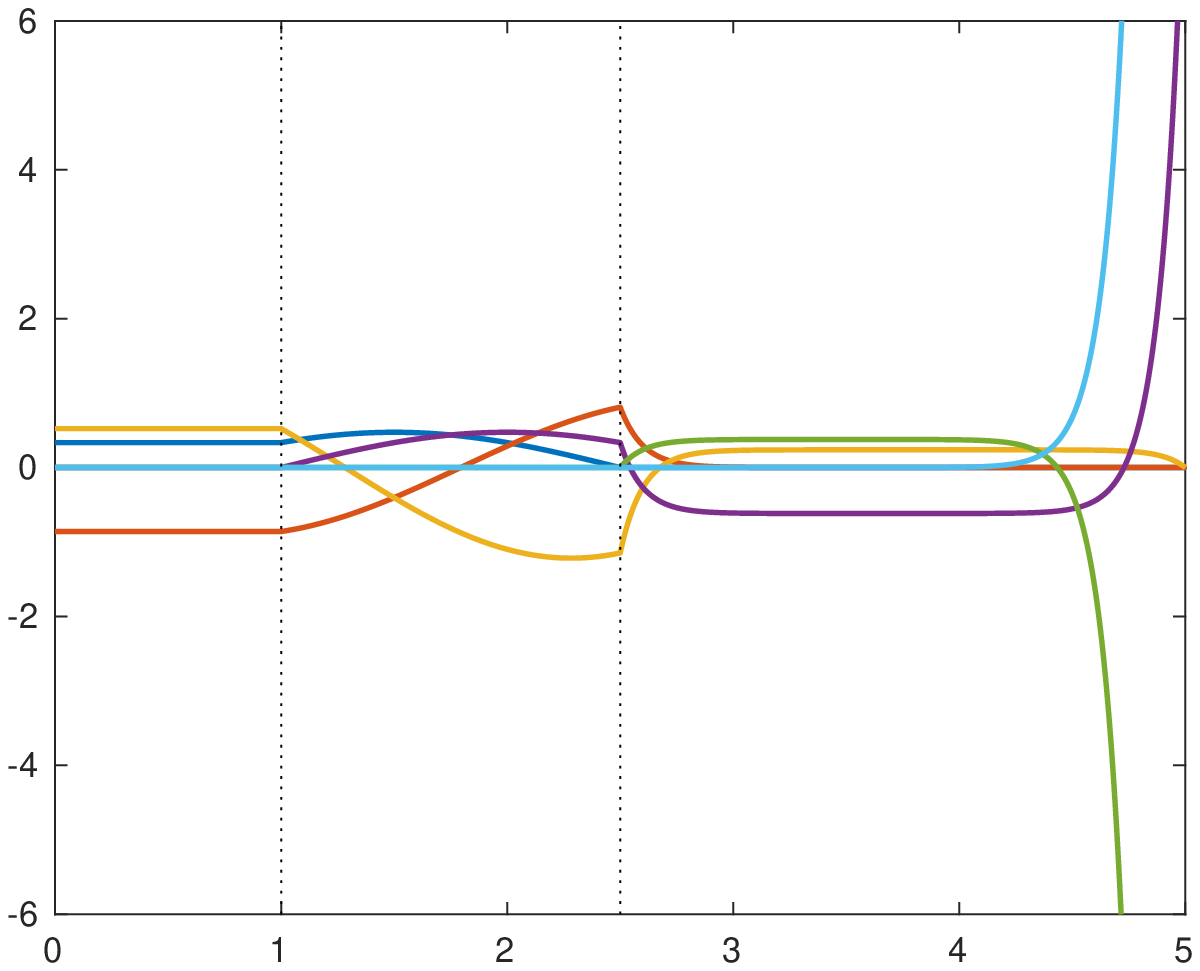}}
\caption{Sets of GTB-splines for the GT-spline spaces $\splSpacerp$ built from the ECT-spaces $\ECT{p_i}{(i)}$, $i=1,2,3$ defined in \cref{ex:1} and different smoothness classes $\mbf{\r}{}$ (left column), together with their first derivatives (middle column) and second derivatives (right column). Knot positions are visualized by vertical dotted lines.}
\label{fig:ex1}
\end{figure}

\begin{example} \label{ex:2}
The profile depicted in \cref{fig:ex2} (left) consists of one circular arc, with center $(2,0)$ and radius $1$, connected by a straight line segment to another circular arc, with center $(0,3)$ and radius $2$. More precisely, we are considering the profile described by the parametric curve
\begin{equation*}%\label{ex2:curve-test}
(X(\x),\; Y(\x))=
\begin{cases}
(2-\sin (\x),\; \cos(\x)), & \x\in [-3\pi/4, 0),
\\
(2-\x,\; 1), & \x\in [0, 2),
\\
(-2\sin(\x/2-1),\; 3-2\cos(\x/2-1)), & \x\in [2, 2+\pi].
\end{cases}
\end{equation*}
One can easily verify that this parameterization is $C^1$ in both components.
This profile can be exactly represented as a parametric $C^1$ GT-spline curve whose components belong to the 4-dimensional GT-spline space $\splSpacerp$ defined by
\begin{equation*}
\domain = \{-3\pi/4, 0,2, 2+\pi\}, \quad
\mbf{\p}{} = \{2,1,2\}, \quad
\mbf{\r}{} = \{-1,1,1,-1\},
\end{equation*}
and
\begin{gather*}
\ECT{2}{(1)}=\Span{1, \cos(\x), \sin(\x)}, \quad
\ECT{1}{(2)}=\Span{1, \x}, \\
\ECT{2}{(3)}=\Span{1, \cos(\x/2), \sin(\x/2)}.
\end{gather*}
The parametric coefficients (control points) are given by
\begin{equation*}
  (2+\sqrt{2}/2,\; -\sqrt{2}/2), \quad 
  (3+\sqrt{2},\; 1), \quad  
  (-2,\; 1), \quad 
  (-2,\; 3).
\end{equation*}
The representation in terms of GTB-splines and the corresponding control polygon is visualized in \cref{fig:ex2} (left). We see that the control polygon nicely encapsulates the profile. The $C^1$ basis functions used in the representation are shown in \cref{fig:ex2} (middle), and their first derivatives in \cref{fig:ex2} (right).
\end{example}

\begin{figure}[t!]
\centering
{\includegraphics[height=3cm]{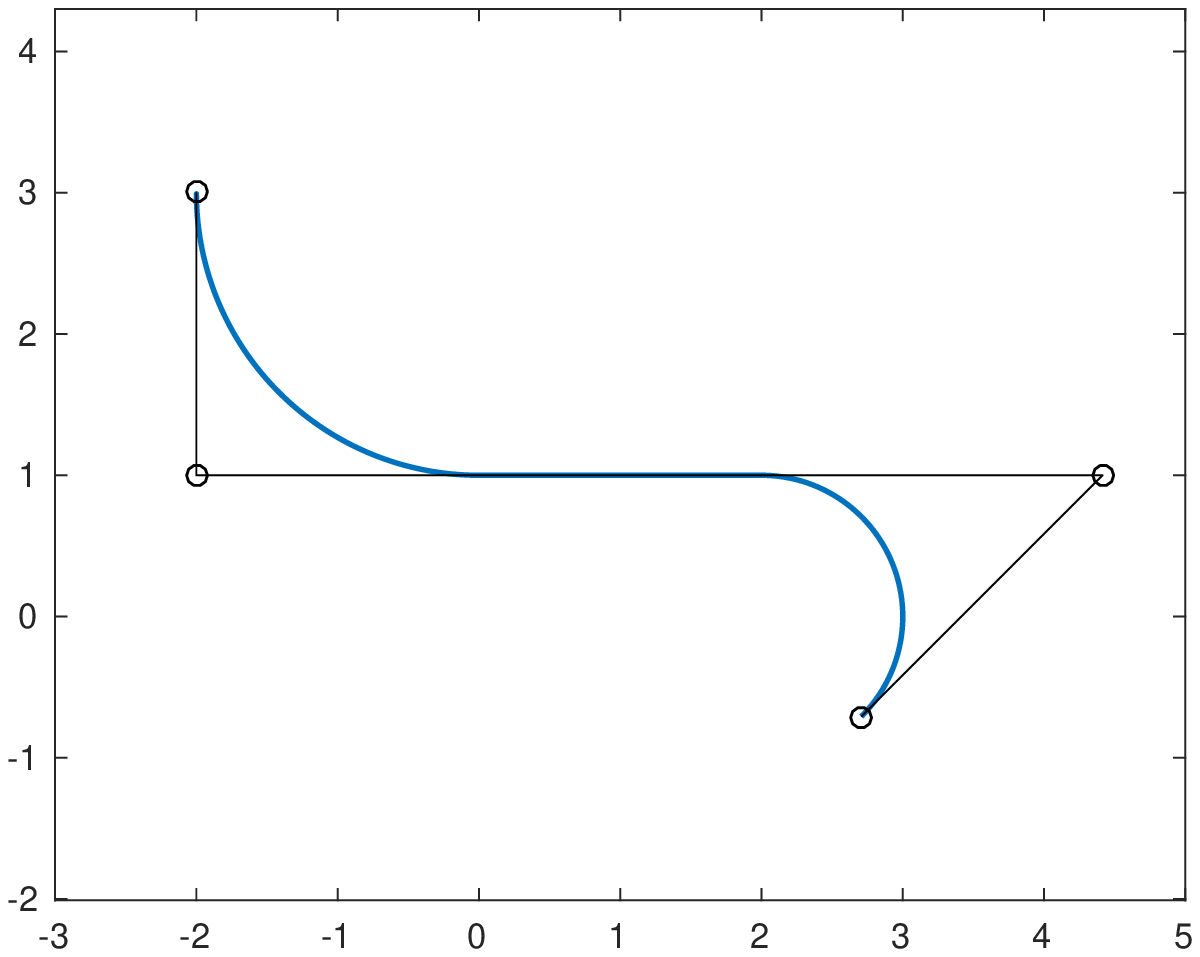}
\hspace*{0.2cm}
\includegraphics[height=3cm]{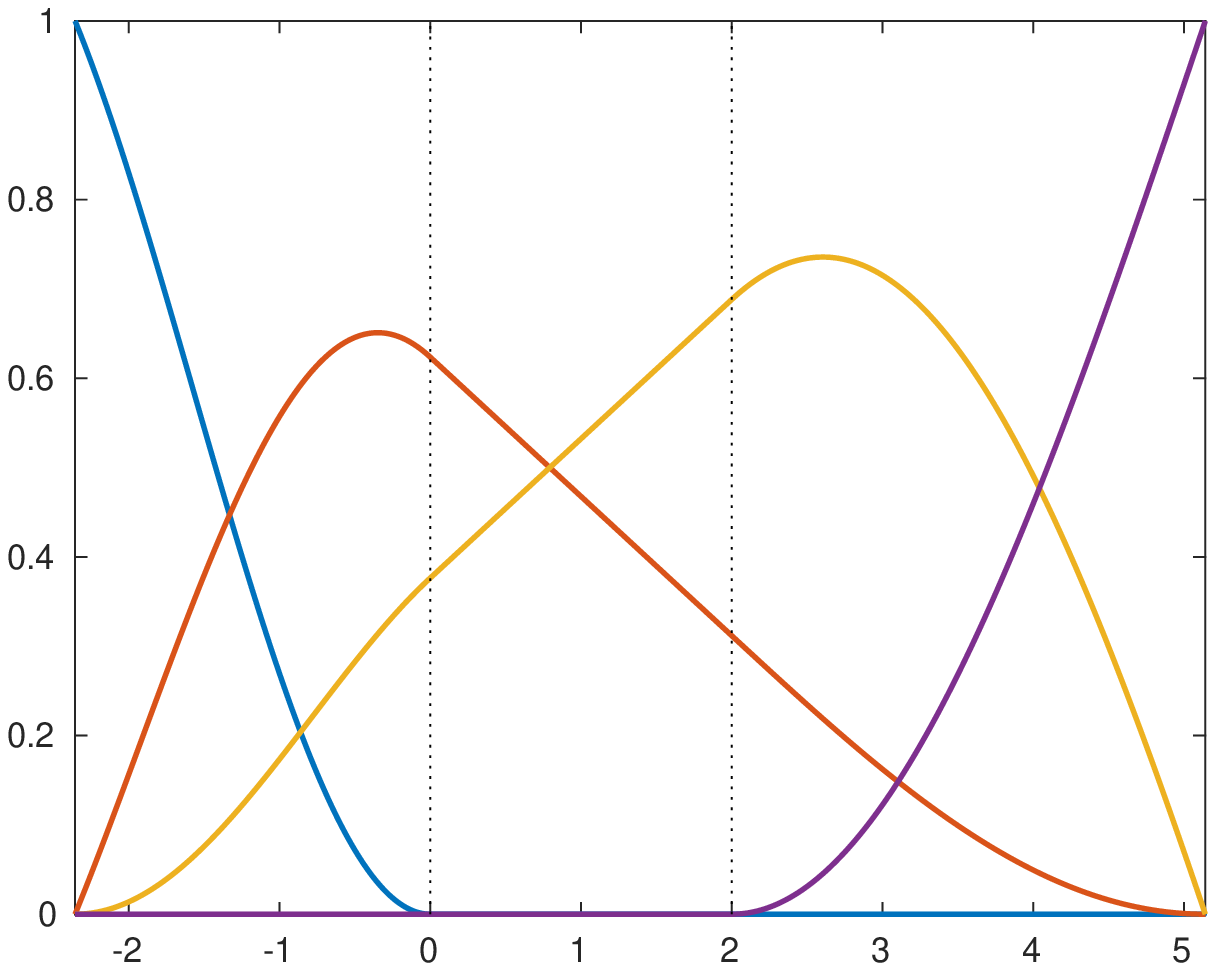} 
\hspace*{0.2cm}
\includegraphics[height=3cm]{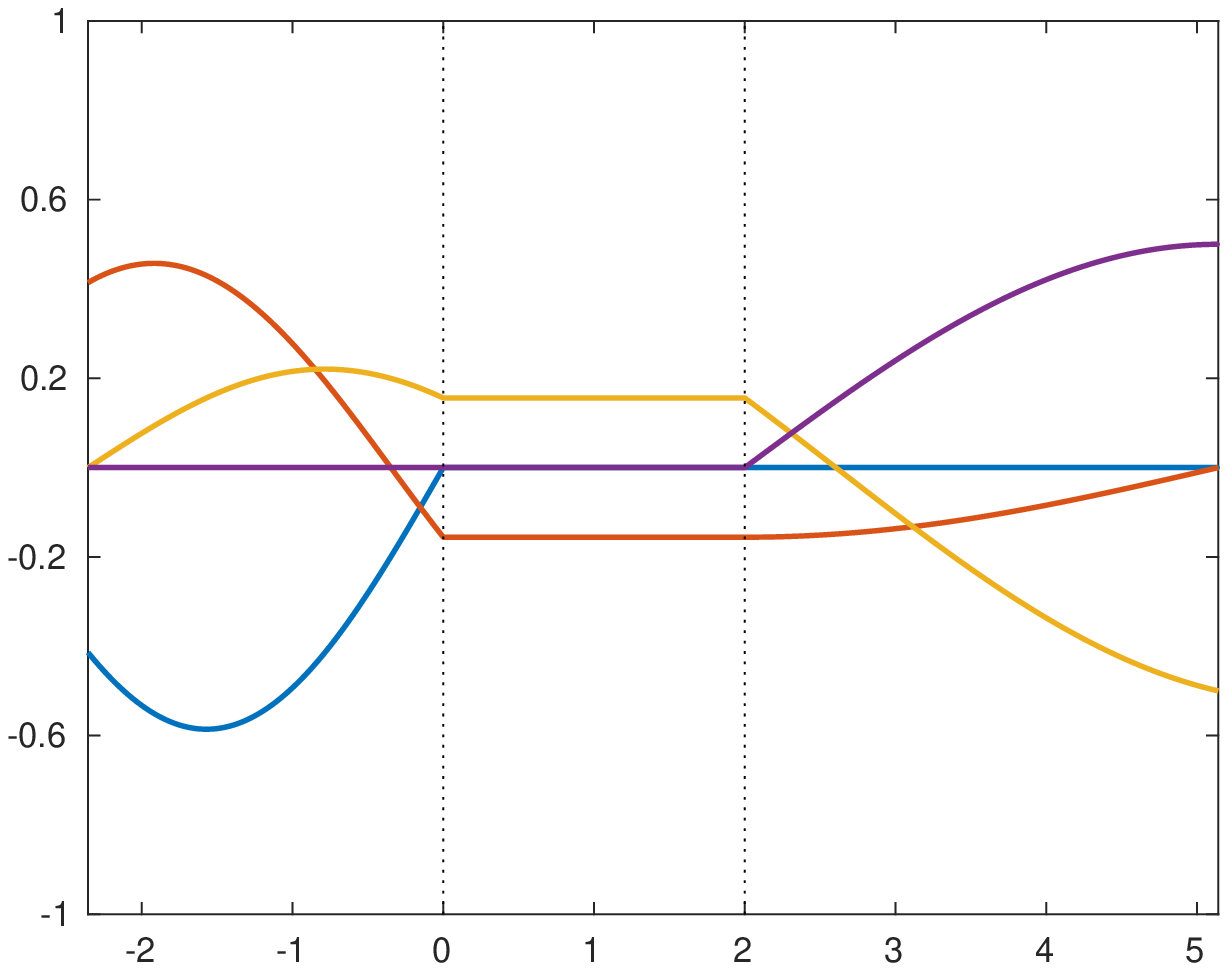}}
\caption{A profile consisting of two circular arcs connected by a straight line defined in \cref{ex:2} (left column), together with the $C^1$ GTB-splines used in the representation (middle column) and their first derivatives (right column). Knot positions are visualized by vertical dotted lines.}
\label{fig:ex2}
\end{figure}

% \begin{remark}
% \cref{ex:2} and corresponding \cref{fig:ex2} illustrate a close relationship between the curve and its control polygon. This is not a coincidence. The first and last elements, with ECT-spaces of dimension three, match the position and tangent of the control-polygon at the element breakpoints. The middle element is linear and is entirely aligned with the control polygon.
% \end{remark}
\begin{remark} The nice behavior of the control polygon in \cref{ex:2} (and corresponding \cref{fig:ex2}) is not a coincidence. The control polygon is easy to construct for any $C^1$ GTB-spline curve whose components belong to a GT-spline space consisting of ECT-spaces of dimension 3 and 2 in an alternating sequence. The control points are given by: (a) the two end points of the curve, and (b) the ordered intersections of the two end point tangent lines with the tangent lines corresponding to the linear segments.
\end{remark}

% Conclusion
\section{Conclusion} \label{sec:conclusion}
GT-spline spaces are smooth function spaces where the pieces are drawn from different ET-spaces of possibly different dimensions. Under quite mild assumptions, they offer the possibility of exploiting the wide flexibility of ET-spaces while retaining the nice properties of classical polynomial spline spaces, including a B-spline-like basis, the so-called GTB-splines.

Besides their common application in constrained interpolation/approximation and computer-aided geometric design --- the richness of ET-spaces offers a huge universe of shapes for modeling --- GTB-splines can be a powerful tool for numerical simulation as well. Since a relevant class of ET-spaces can be specified as the nullspaces of linear differential operators, it is clear that GT-splines and GTB-splines can provide an appealing problem-dependent alternative to classical polynomial splines for the numerical treatment of differential and integral problems. Heretofore, this great potential has been thwarted by the lack of efficient and reliable evaluation procedures for GTB-splines, even in the simpler case where ET-spaces of the same dimension are glued together.

We have presented an efficient and robust algorithm for evaluation of GTB-splines whenever they exist. The algorithm proceeds by incrementally increasing the smoothness starting from the space of piecewise discontinuous functions obtained by collecting the various ET-spaces. It requires as input the local Bernstein-like bases and produces as output the entire set of GTB-splines that span the considered GT-spline space. The algorithm recursively constructs the nullspace of a suitable matrix in a numerically stable way without solving a linear system. In contrast with the current available methods for evaluation of GTB-splines, the proposed strategy does not require any (numerical) integration. Indeed, integration can be avoided also to produce the starting Bernstein-like bases as they can be obtained by solving suitable local Hermite interpolation problems and this can be done in a pre-processing step. The provided algorithm is a \Chef extension of the procedure recently developed and analyzed in \cite{Speleers:2018,Toshniwal:2017a,Toshniwal:2018b} for multi-degree polynomial splines.

The considered ET-spaces, defined on the bounded and closed intervals identified by the breakpoints, are represented in terms of weights, possibly constrained by some admissibility conditions. It should be noted, however, that this is merely for the sake of presentation, so as to have a framework where GTB-splines exist. The proposed algorithm does not require any weights and always produces the GTB-spline basis whenever it exists. Therefore, the end-user can completely ignore this representation in terms of weights when just interested in the computation of GTB-splines.
{\Rd Actually, the algorithm is applicable to any kind of spline space that is equipped with a B-spline-like basis (in the sense of \cref{rmk:abstract-basis}), also beyond our \Chef setting. In this perspective, it will be interesting to explore the multi-degree framework in the context of variable degree polynomial splines \cite{Bosner:2013,Costantini:2000,Mazure:2001}.}

The majority of works on \Chef splines deals with spline spaces obtained by gluing together ET-spaces of the same dimension. In this context, mainly motivated by computer-aided geometric design as application, the concept of geometric continuity is often considered instead of classical continuity. Geometric continuity offers additional shape parameters for design. However, this flexibility comes at a price of increased complexity and can be of practical interest only when equipped with proper, preferably automatic, strategy for parameter selection. In this paper, we have deliberately confined ourselves to classical continuity with the aim of promoting the use of GT-splines in the wider context of numerical simulation and more precisely in isogeometric methods, where the choice of the ET-spaces has to be driven by the character of the problem under consideration. Nevertheless, the presented procedure has the potential to construct an efficient evaluation algorithm for geometrically continuous \Chef splines as well. Future research efforts will also focus on multivariate extensions of the algorithmic evaluation approach; a particularly interesting topic in this direction is the construction of a B-spline-like basis for GT-splines on T-meshes.

\section*{Acknowledgments}
R. R. Hiemstra, T. J. R. Hughes, and D. Toshniwal were partially supported by the Office of Naval Research (N00014-17-1-2119, N00014-17-1-2039, and N00014-13-1-0500), by the Army Research Office (W911NF-13-1-0220), by the National Institutes of Health (5R01HL129077-02,03), and by the National Science Foundation Industry/University Cooperative Research Center (IUCRC) for Efficient Vehicles and Sustainable Transportation Systems (EV-STS), and the United States Army CCDC Ground Vehicle Systems Center (TARDEC/NSF Project 1650483 AMD 2).  C. Manni and H. Speleers were partially supported by the Mission Sustainability Programme of the University of Rome Tor Vergata through the project IDEAS (CUP E81I18000060005) and by the MIUR Excellence Department Project awarded to the Department of Mathematics, University of Rome Tor Vergata (CUP E83C18000100006); they are members of Gruppo Nazionale per il Calcolo Scientifico -- Istituto Nazionale di Alta Matematica. This support is gratefully acknowledged.  

\bibliographystyle{siamplain}
\bibliography{sections/bibliography_c}
\end{document}